\def\bbb {\mathbf{b}}
\def\bC {\mathbf{C}}
\def\bN {\mathbf{N}}
\def\bp {\mathbf{p}}
\def\bq {\mathbf{q}}
\def\bR {\mathbf{R}}
\def\bV {\mathbf{V}}
\def\bW {\mathbf{W}}
\def\fH {\mathfrak{H}}
\def\fS {\mathfrak{S}}
\def\cB {\mathcal{B}}
\def\cC {\mathcal{C}}
\def\cD {\mathcal{D}}
\def\cE {\mathcal{E}}
\def\cF {\mathcal{F}}
\def\cL {\mathcal{L}}
\def\cM {\mathcal{M}}
\def\cP {\mathcal{P}}
\def\cS {\mathcal{S}}
\def\cV {\mathcal{V}}
\def\cW {\mathcal{W}}
\def\scrB{\mathscr{B}}
\def\scrH{\mathscr{H}}
\def\scrR{\mathscr{R}}
\def\scrV{\mathscr{V}}
\def\a {{\alpha}}
\def\b {{\beta}}
\def\g {{\gamma}}
\def\Ga {{\Gamma}}
\def\de {{\delta}}
\def\l {{\lambda}}
\def\L {{\Lambda}}
\def\si {{\sigma}}
\def\om {{\omega}}
\def\d {{\partial}}
\def\grad {{\nabla}}
\def\Dlt {{\Delta}}
\def\la {\langle}
\def\ra {\rangle}
\def\bu {{\bullet}}
\def\dd {\,\mathrm{d}}
\newcommand{\Div}{\operatorname{div}}
\newcommand{\Tr}{\operatorname{trace}}
\newcommand{\ad}{\operatorname{\mathbf{ad}}}
\def\OP {\mathrm{OP}}
\newcommand{\ba}{\begin{aligned}}
\newcommand{\ea}{\end{aligned}}
\newcommand{\be}{\begin{equation}}
\newcommand{\ee}{\end{equation}}
\newcommand{\lb}{\label}
\newtheorem{Thm}{Theorem}[section]
\newtheorem{Prop}[Thm]{Proposition}
\newtheorem{Lem}[Thm]{Lemma}
\newtheorem{Def}[Thm]{Definition}
\begin{document}

\title[Empirical Measures and Quantum Mechanics]{Empirical Measures and Quantum Mechanics:\\ Application to the Mean-Field Limit}

\author[F. Golse]{Fran\c cois Golse}
\address[F.G.]{CMLS, \'Ecole polytechnique, CNRS, Universit\'e Paris-Saclay , 91128 Palaiseau Cedex, France}
\email{francois.golse@polytechnique.edu}

\author[T. Paul]{Thierry Paul}
\address[T.P..]{CMLS, \'Ecole polytechnique, CNRS, Universit\'e Paris-Saclay , 91128 Palaiseau Cedex, France}
\email{thierry.paul@polytechnique.edu}

\begin{abstract}
In this paper, we define a quantum analogue of the notion of empirical measure in the classical mechanics of $N$-particle systems. We establish an equation governing the evolution of our quantum analogue of the $N$-particle empirical 
measure, and we prove that this equation contains the Hartree equation as a special case. Our main application of this new object to the mean-field limit of the $N$-particle Schr\"odinger equation is an $O(1/\sqrt{N})$ convergence rate 
in some appropriate dual Sobolev norm for the Wigner transform of the single-particle marginal of the $N$-particle density operator, uniform in $\hbar\in(0,1]$ (where $\hbar$ is the Planck constant) provided that $V$ and $(-\Dlt)^{3+d/2}V$ 
have integrable Fourier transforms.
\end{abstract}

\keywords{Mean-field limit, Empirical measure, Quantum N-body problem, Semiclassical scaling}

\subjclass{82C10, 35Q55 (81Q05)}

\date{\today}

\maketitle

%\tableofcontents

%%%%%%%%%%%%%%%%%%%%%%%%%%%%%%%%%%%%%%%%%%%%%%%%%%%%%%%%%%%%%%%%%%%%%%%%%%%%%%%%%%%%%%%%%%%%%%%%%%%%%%%%%%%%%%%%%%%%%%%%%

\section{Introduction and Main Result}

%%%%%%%%%%%%%%%%%%%%%%%%%%%%%%%%%%%%%%%%%%%%%%%%%%%%%%%%%%%%%%%%%%%%%%%%%%%%%%%%%%%%%%%%%%%%%%%%%%%%%%%%%%%%%%%%%%%%%%%%%

\subsection{The Mean-Field Limit for the Dynamics of $N$ Identical Particles}

%%%%%%%%%%%%%%%%%%%%%%%%%%%%%%%%%%%%%%%%%%%%%%%%%%%%%%%%%%%%%%%%%%%%%%%%%%%%%%%%%%%%%%%%%%%%%%%%%%%%%%%%%%%%%%%%%%%%%%%%%

In classical mechanics, the dynamics of $N$ identical, interacting point particles is governed by Newton's second law of motion written for each particle. One obtains in this way a system of $6N$ coupled ordinary differential equations, 
set on a $6N$-dimensional phase space. For large values of $N$, solving such a differential system becomes impracticable. One way of reducing the complexity of this problem is to solve the Liouville equation for the phase space
number density of the ``typical particle'', replacing the force exerted on that particle by the ``self-consistent'' force, also called the ``mean-field force'', computed in terms of the solution of the Liouville equation itself. The mean-field
equation so obtained is the Vlasov equation (see equation\eqref{Vlasov} below). When the force field derives from a $C^{1,1}$ potential, the validity of this approximation has been proved in \cite{NeunzertWick, BraunHepp, Dobrushin}. 
The case of the Coulomb (electric), or of the Newton (gravitational) potential remains open at the time of this writing, in spite of significant progress on this program: see \cite{HaurayJabin1, HaurayJabin2} for singularities weaker than
the Coulomb or Newton $1/r$ singularity at the origin, or \cite{PicklLazaro,Lazaro} for the Coulomb potential with a vanishing regularization as the particle number $N$ tends to infinity. All these results are based on a remarkable 
property, stated as Proposition \ref{P-WSolVlasov} below, of the phase-space empirical measure of the $N$-particle system governed by a system if coupled ordinary differential equations. See also \cite{MischMouWenn} for another 
approach, which avoids using the nice properties of this empirical measure, and applies to more general dynamics (involving jump or diffusion processes for instance).

In quantum mechanics, the analogous mean-field approximation goes back to the work of Hartree \cite{Hartree1928}. Rigorous derivations of the time-dependent Hartree equation from the quantum $N$-body problem have been obtained
in the case of bounded potentials by Hepp \cite{Hepp74} (using coherent states), then by Spohn \cite{Spohn80} (for pure states, using the BBGKY hierarchy) --- see also \cite{BardosFGMauser} for a discussion of the case of mixed states 
following Spohn's very concise argument. The case of singular potentials, including the Coulomb potential, is treated in \cite{ErdosYau} (see also \cite{BEGMY}) in terms of the BBGKY hierarchy, in \cite{Pickl,KnowlesPickl} by a simpler
argument in the case of pure states, and in \cite{RodSchlein,ChenLeeSchlein,BenePortaSchlein} with second quantization techniques. All these results assume that the value of the Planck constant is kept fixed while $N$ tends to infinity. 
This is also true in the special case of the Fermi-Dirac statistics which involves a mean-field scaling of the interaction leading,  after some appropriate rescaling of time, to an \textit{effective} Planck constant $\hbar\sim N^{-1/3}$: see 
\cite{NarnhoSewell, BeneJakPortaSaffiSchlein}.

On the other hand, the system of Newton's equations governs the asymptotic behavior of the quantum $N$-body problem for all $N$ kept fixed as $\hbar$ tends to zero. Equivalently, the asymptotic behavior of solutions of the Heisenberg 
equation in the classical limit is described by the Liouville equation. Since the Heisenberg equation governs the evolution of time-dependent density operators on some Hilbert space, while the Liouville equation describes the dynamics
of a probability density in phase space, the classical limit can be formulated in terms of the Wigner function, associated to any quantum observable as recalled in Appendix B. There is a huge literature on this subject; see for instance
\cite{LionsPaul} for a proof of the weak convergence of the Wigner function of a solution of the Heisenberg equation to a positive measure, solution of the Liouville equation, in the limit as $\hbar\to 0$. This result holds true for all $C^{1,1}$
potentials and a very general class of initial data (without any explicit dependence in $\hbar$). Likewise, the Vlasov equation governs the asymptotic behavior of solutions of the Hartree equation (equation \eqref{Hartree} below) for a very
large class of potentials including the Coulomb case: see again \cite{LionsPaul} for a result formulated in terms of weak convergence of Wigner functions. For regular potentials, more precise information on the convergence of Wigner
functions can be found in \cite{APPP1,APPP2,BPSS}.

In other words, the mean-field limit has been established rigorously both for each fixed $\hbar>0$ and for the vanishing $\hbar$ limit of the quantum $N$-particle dynamics independently. This suggests the problem of obtaining a uniform 
in $\hbar\in(0,1]$ convergence rate estimate for the mean-field limit of the quantum $N$-particle dynamics. A positive answer to this question, valid without restriction on the initial data, would justify in particular using the Vlasov instead 
of the Hartree equation for large systems of heavy particles.

The first results in this direction are \cite{PezzoPulvi} (where the mean-field limit is established term by term in the semiclassical expansion) and \cite{GraffiMartiPulvi} for WKB states along distinguished limits of the form $N\to+\infty$ 
with $\hbar\equiv\hbar(N)\to 0$. See also \cite{FrohGraffiSchwartz} for results in the case of very special interactions, not defined in terms of a potential.

If both the interaction potential and the initial data are analytic, the BBGKY approach leads to a uniform in $\hbar\in(0,1]$ convergence rate estimate for the mean-field limit, of optimal order $O(1/N)$: see \cite{FGPaulPulvi}. This $O(1/N)$ 
bound, obtained at the cost of stringent, physically unsatisfying regularity assumptions, is similar to the nonuniform in $\hbar$ convergence rate obtained in \cite{ChenLeeSchlein} --- except the latter result holds for singular potentials 
including the Coulomb case.

A new approach to the uniformity problem has been proposed in \cite{FGMouPaul, FGPaul}; it is based on a quantum analogue of the quadratic Monge-Kantorovich-Wasserstein distance, similar to the one used in \cite{Dobrushin}.
This method provides an estimate of the convergence rate in the mean-field limit that is uniform as $\hbar\to 0$. This method can be combined with the usual BBGKY strategy, following carefully the dependence of the error estimate
in terms of $\hbar$, to produce a uniform in $\hbar\in(0,1]$ convergence rate of order $O((\ln\ln N)^{-1/2})$ for  initial data of semiclassical type: see \cite{FGPaulPulvi}.

\smallskip
The main  results of the present article are Theorem \ref{T-MFUnif}, and Theorem \ref{T-EMQEq} together with Definition \ref{D-EMQ}. Theorem \ref{T-MFUnif}  establishes the quantum mean-field limit with rate of convergence of order 
$\frac1{\sqrt N}$, uniformly in  $\hbar\in(0,1]$ and for all initial factorized data. By ``all", we mean that any dependence in the Planck constant is allowed. In particular, no semiclassical scaling of the initial data is required. The proof of 
Theorem \ref{T-MFUnif} requires defining a notion of empirical ``measure'' in quantum mechanics. The precise definition of this new (to the best of our knowledge) mathematical object can be found in Definition \ref{D-EMQ} of Section 
\ref{S-EqEM}, and the equation governing its dynamics is derived in Theorem \ref{T-EMQEq}.

\smallskip
Before stating this convergence rate estimate, we need to introduce some notation used systematically in the present paper. 

Denote by $\fH:=L^2(\bR^d)$ (the single-particle Hilbert space) and, for each $N\ge 1$, let $\fH_N:=\fH^{\otimes N}\simeq L^2((\bR^d)^N)$. Henceforth, $\cL(\fH)$ (resp. $\cL^1(\fH)$) designates the space of bounded (resp. trace-class) 
operators on $\fH$. For each permutation $\si\in\fS_N$ (the group of permutations of $\{1,\ldots,N\}$), let $U_\si$ be the unitary operator on $\fH_N$ defined by the formula
$$
(U_\si\Psi_N)(x_1,\ldots,x_N):=\Psi_N(x_{\si^{-1}(1)},\ldots,x_{\si^{-1}(N)})\,.
$$
We denote by $\cL_s(\fH_N)$ (resp. $\cL^1_s(\fH_N)$) the set of bounded (resp. trace-class) operators $F_N$ on $\fH_N$ satisfying the condition
$$
U_\si F_NU^*_\si=F_N\,,\qquad\hbox{ for all }\si\in\fS_N\,.
$$
We denote by $\cD(\fH)$ the set of density operators on $\fH$, i.e. operators $R$ satisfying
$$
R=R^*\ge 0\,,\qquad\Tr_\fH(R)=1\,.
$$
Likewise, we denote by $\cD_s(\fH_N)$ the set of symmetric $N$-particle densities on $\fH_N$, i.e. $\cD_s(\fH_N):=\cD(\fH_N)\cap\cL_s(\fH_N)$. 

\smallskip
Let $R_N\in\cD_s(\fH_N)$; set $r_N\equiv r_N(x_1,\ldots,x_N;y_1,\ldots,y_N)$ to be the integral kernel of $R_N$. The first marginal of the quantum density $R_N$ is the element $R_{N:1}$ of $\cD(\fH)$ whose integral kernel is
$$
r_{n:1}(x,y):=\int_{(\bR^d)^{N-1}}r_{N:1}(x,z_2,\ldots,z_N,y,z_2,\ldots,z_N)\dd z_2\ldots\dd z_N\,.
$$

\smallskip
For each integer $n\ge 0$, we denote by $C^n_b(\bR^d\times\bR^d)$ the set of functions $f$ of class $C^n$ on $\bR^d\times\bR^d$ such that $\d^\a f$ is continuous and bounded on $\bR^d\times\bR^d$ for all $\a\in\bN^d$ with
$|\a|\le n$. This is a Banach space for the norm
$$
\|f\|_{n,\infty}:=\max_{|\a|\le n}\|\d^\a f\|_{L^\infty(\bR^d\times\bR^d)}\,.
$$
Finally, let $\|\cdot\|'_{n,\infty}$ be the norm of the topological dual of $C^n_b(\bR^d\times\bR^d)$. In other words, for each continuous linear functional $L$ on $C^n_b(\bR^d\times\bR^d)$, one has
$$
\|L\|'_{n,\infty}:=\sup\{|\la L,f\ra|\,\,:\,\,\|f\|_{n,\infty}\le 1\}\,.
$$

\begin{Thm}\lb{T-MFUnif}
Assume that $V\in C_0(\bR^d)$ is an even, real-valued function whose Fourier transform $\hat V$ satisfies
$$
\bV:=\tfrac1{(2\pi)^d}\int_{\bR^d}|\hat V(\xi)|(1+|\xi|)^{d+6}\dd\xi<\infty\,.
$$
Let $R^{in}\in\cD(\fH)$, and let $t\mapsto R(t)$ be the solution to the Cauchy problem for the Hartree equation
\be\lb{Hartree}
i\hbar\d_tR(t)=[-\tfrac12\hbar^2\Dlt+V_{R(t)},R(t)]\,,\qquad R(0)=R^{in}
\ee
with mean-field potential 
\be\lb{MFPot}
V_{R(t)}(x):=\int_{\bR^d}V(x-z)r(t,z,z)\dd z\,,
\ee
where $r\equiv r(t,x,y)$ is the integral kernel of $R(t)$.

For each $N\ge 2$, let $t\mapsto F_N(t)$ be the solution to the Cauchy problem for the Heisenberg-von Neumann equation
\be\lb{NHeisen}
i\hbar\d_tF_N(t)=[\scrH_N,F_N(t)]\,,\qquad F_N(0)=F^{in}_N
\ee
with initial data $F^{in}_N=(R^{in})^{\otimes N}$, where the $N$-body quantum Hamiltonian is
\be\lb{NHamilt}
\scrH_N:=\sum_{j=1}^N-\tfrac12\hbar^2\Dlt_{x_k}+\frac1N\sum_{1\le j<k\le N}V(x_j-x_k)\,.
\ee

Then, for all $\hbar\in(0,1]$, all $N\ge 1$ and all $t\ge 0$, the Wigner transforms at scale $\hbar$ of $F_{N:1}(t)$ and $R(t)$ satisfy the bound
\be\lb{UnifCvRate}
\left\|W_{\hbar}[F_{N:1}(t)]-W_{\hbar}[R(t)]\right\|'_{d+5,\infty}\le\frac{\g_d+1}{\sqrt{N}}\exp\left(C_dte^{(d+5)\Ga t}\bV\left(1\!+\!\hbar^2\tfrac{\bV}{(d+5)\Ga}\right)\right)\,,
\ee
where $\Ga:=\|\grad^2V\|_{L^\infty}$ while $\g_d$ and $C_d\equiv C_d[\bV]$ are positive constants which depend only on the space dimension $d$, and on $d$ and $\bV$ respectively.
\end{Thm}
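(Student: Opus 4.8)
\emph{Reduction to the empirical measure.} The plan is to route everything through the quantum empirical measure and a weak stability estimate for its evolution equation. Recall from Definition~\ref{D-EMQ} and Theorem~\ref{T-EMQEq} that the quantum empirical measure $\cE_N(t):A\mapsto\frac1N\sum_{j=1}^N e^{it\scrH_N/\hbar}A_je^{-it\scrH_N/\hbar}$ (with $A_j$ acting as $A$ on the $j$-th tensor slot of $\fH_N$) obeys a closed, quadratic ``quantum Vlasov'' equation with an explicit $O(1/N)$ remainder, and that $\cE_N(0)(A)=\frac1N\sum_jA_j$. The scalar-valued solutions of the same equation \emph{without} the remainder are exactly the Hartree solutions: $\cE^H(t)(A):=\Tr_\fH(R(t)A)\,\Id_{\fH_N}$ solves it, with $\cE^H(0)(A)=\Tr_\fH(R^{in}A)\Id$. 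Since $\Tr_{\fH_N}(F^{in}_N\cE_N(t)(A))=\Tr_\fH(F_{N:1}(t)A)$ and $\Tr_{\fH_N}(F^{in}_N\cE^H(t)(A))=\Tr_\fH(R(t)A)$ with $F^{in}_N=(R^{in})^{\otimes N}$, and since $\langle W_\hbar[\rho],f\rangle=\Tr_\fH(\rho\,\OP(f))$ in the normalization of Appendix~B, the left-hand side of~\eqref{UnifCvRate} equals $\sup_{\|f\|_{d+5,\infty}\le1}\big|\Tr_{\fH_N}\!\big(F^{in}_N\,D_N(t)(\OP(f))\big)\big|$ where $D_N(t):=\cE_N(t)-\cE^H(t)$. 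The crucial feature of the initial datum is that $D_N(0)(A)=\frac1N\sum_j\big(A_j-\Tr_\fH(R^{in}A)\Id\big)$ has operator norm of order $\|A\|$, but, the $N$ tensor factors of $F^{in}_N$ being independent, it is \emph{centred}, $\Tr_{\fH_N}(F^{in}_N D_N(0)(A))=0$, with \emph{small fluctuation} $\big\|D_N(0)(A)(F^{in}_N)^{1/2}\big\|_{\cL^2(\fH_N)}^2=\frac1N\big(\Tr_\fH(R^{in}A^*A)-|\Tr_\fH(R^{in}A)|^2\big)=O(1/N)$. So the whole argument is a \emph{weak} stability estimate propagating this $O(1/\sqrt N)$: operator-norm information is $O(1)$ and useless, and the $O(1/N)$ remainder in the equation will only contribute at order $1/N$.

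\emph{The Gronwall estimate.} Evaluate $\cE_N(t)$ and $\cE^H(t)$ on the Weyl operators $\chi_{\xi,\eta}:=e^{i(\xi\cdot x+\eta\cdot\hbar D)}$ ($D=-i\grad$), which are unitary, so $D_N(t)(\chi_{\xi,\eta})$ has operator norm $\le2$ uniformly in $t,\xi,\eta,\hbar,N$. Set
$$G_N(t):=\sup_{\|f\|_{d+5,\infty}\le1}\max\Big(\big|\Tr_{\fH_N}(F^{in}_N D_N(t)(\OP(f)))\big|,\ \big\|D_N(t)(\OP(f))(F^{in}_N)^{1/2}\big\|_{\cL^2(\fH_N)}\Big),$$
so $G_N(0)\le1/\sqrt N$ while $G_N(t)\le2$ always. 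Differentiate $G_N$ and insert the equation of Theorem~\ref{T-EMQEq}. The kinetic part transports the test symbol $f$ along the free phase-space flow; the potential part is, in Fourier, a convolution against $\widehat V$ carrying a multiplier $\tfrac2\hbar\sin(\tfrac\hbar2\xi\cdot\eta)$ of modulus $\le|\xi||\eta|$; pulled back onto test symbols and combined, these distort $\|f\|_{d+5,\infty}$ at rate $\le(d+5)\Ga$ ($\Ga=\|\grad^2V\|_{L^\infty}$) and amplify at rate $\le C_d\bV$ — the hypothesis $\bV<\infty$ (i.e.\ $V$ and $(-\Dlt)^{3+d/2}V$ with integrable Fourier transform) being exactly what makes this convolution bounded on $C^{d+5}_b$. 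The genuinely quadratic term $\Tr_{\fH_N}(F^{in}_N D_N(t)(B)D_N(t)(C))$ is linearised using $\|D_N(t)(\chi_{\xi,\eta})\|\le2$ on one factor, hence absorbed into the fluctuation part of $G_N$ at shifted Fourier modes; the $O(1/N)$ remainder yields a multiplier of modulus $\le\hbar^2|\eta|^2\Ga/(2N)$, thanks to $\grad V(0)=0$, hence a forcing of size $\le C_d/N$ once $|\eta|^2$ is read as two derivatives of $f$. Collecting everything gives a \emph{linear} differential inequality
$$\d_tG_N(t)\ \le\ C_d\,\bV\,e^{(d+5)\Ga t}\Big(1+\hbar^2\tfrac{\bV}{(d+5)\Ga}\Big)\,G_N(t)\ +\ \frac{C_d}{N}\,,$$
in which the bracketed $\hbar^2$ also absorbs the $O(\hbar^2)$ gap between $\tfrac2\hbar\sin(\tfrac\hbar2\xi\cdot\eta)$ and its classical limit $\xi\cdot\eta$; since $\hbar\le1$ all constants are uniform in $\hbar\in(0,1]$. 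Integrating, using $\int_0^te^{(d+5)\Ga s}\dd s\le t\,e^{(d+5)\Ga t}$, and discarding the estimate wherever it would exceed the trivial bound $G_N\le2$, yields $G_N(t)\le\tfrac{\g_d+1}{\sqrt N}\exp\!\big(C_d\,t\,e^{(d+5)\Ga t}\bV(1+\hbar^2\tfrac{\bV}{(d+5)\Ga})\big)$ with $\g_d$ collecting the dimensional constants; by the first step this is exactly~\eqref{UnifCvRate}.

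\emph{Main obstacle.} The heart of the matter is the Gronwall step: running the weak stability estimate for the \emph{operator-valued, quadratically nonlinear} empirical-measure equation in a functional that simultaneously (i) detects the $O(1/\sqrt N)$ fluctuation rather than the worthless $O(1)$ operator norm, (ii) is compatible with the transport, so that the $|\eta|$-growth carried by the potential coupling on the Fourier side is absorbed into the $e^{(d+5)\Ga t}$ distortion of $C^{d+5}_b$ test symbols by the characteristic flow rather than causing genuine blow-up, and (iii) keeps every constant uniform in $\hbar\in(0,1]$ — which hinges on $\grad V(0)=0$ to kill the $1/\hbar$ that the $O(1/N)$ remainder would otherwise produce, and on the uniform bound $\tfrac2\hbar|\sin(\tfrac\hbar2\xi\cdot\eta)|\le|\xi||\eta|$ for the potential multiplier. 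A secondary (routine but delicate) point is the bookkeeping turning ``$d+5$ bounded derivatives of the test symbol'' and ``$(1+|\xi|)^{d+6}$ integrability of $\widehat V$'' into precisely the exponents of~\eqref{UnifCvRate}, and verifying that $\cE^H(t)(A)=\Tr_\fH(R(t)A)\Id$ does solve the remainder-free equation (the ``Hartree as a special case'' assertion).
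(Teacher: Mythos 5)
Your high-level architecture is the right one and matches the paper's: route the comparison through the quantum empirical measure $\cM_N(t)$ and the Hartree functional $\scrR(t)(A)=\Tr_\fH(R(t)A)\,I_{\fH_N}$, observe that the factorized initial datum makes the difference centred with $\cL^2$ fluctuation $O(1/N)$, and then propagate by a Gronwall argument in a functional built from test symbols of class $C^{d+5}_b$. That is indeed what Steps 1, 5, 6 and 7 of the paper do. But the core of the proof — the part that makes the Gronwall inequality true \emph{uniformly in $\hbar$} — is not carried out in your proposal, and two of your claims about how it closes are incorrect.

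First, the mechanism you invoke for the $O(1/N)$ remainder is wrong. You assert that $\grad V(0)=0$ (from $V$ even) is what ``kills the $1/\hbar$ that the $O(1/N)$ remainder would otherwise produce.'' There is no such cancellation in the paper and none is needed. The $1/N$ correction comes from the commutator identity $[\cM^{in}_N A,\cM^{in}_N B]=\frac1N\cM^{in}_N[A,B]$ (Lemma \ref{L-ComMu}); after inserting into the interaction term it produces $\frac1N\cM_N(s)[E^*_\om,E_\om A(s)]$, whose norm is $\le\frac1N\|[E^*_\om,A(s)]\|$ by unitarity of $E_\om$. The $1/\hbar$ from Duhamel is then absorbed by the \emph{same} semiclassical bound $\|[E_\om,A(s)]\|/\hbar=O(1)$ that controls the main term — not by any vanishing of $\grad V$ at the origin.

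Second, and more seriously, the semiclassical commutator bound $\|[E_\om,A(s)]\|/\hbar\lesssim|\om|e^{(d+5)\Ga T}+\hbar^2(\ldots)$ is exactly Lemma \ref{L-KeyL}, and it is not the ``multiplier $\tfrac2\hbar\sin(\tfrac\hbar2\xi\cdot\eta)$'' estimate you sketch. A formal Moyal-multiplier bound on symbols does not yield a uniform-in-$\hbar$ \emph{operator-norm} bound after conjugation by the time-dependent Hartree propagator $S(s,\tau)$: the conjugated operator $A(s)=S(s,\tau)\OP^W_\hbar[a]S(\tau,s)$ is not a Weyl operator with an obvious symbol. The paper has to introduce the classically transported comparison operator $B_\hbar(t,s)=\OP^W_\hbar[b\circ\Phi(t,s)]$, control the derivatives of $b\circ\Phi$ via Lemma \ref{L-BR}, estimate $[E_\om,B_\hbar(t,s)]$ by Calderon--Vaillancourt (Lemma \ref{L-EstimB}), and then bound $\scrB_\hbar(t,s)-B_\hbar(t,s)$ by an $O(\hbar^2)$ Egorov-type remainder (Lemmas \ref{L-scrB-B} and \ref{L-Qh}). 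This Bouzouina--Robert comparison is the heart of the $\hbar$-uniformity; calling it ``routine bookkeeping'' misses the point.

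Third, your Gronwall functional $G_N(t)$ is not the one that propagates. You include the $\cL^2$ fluctuation $\|D_N(t)(\OP(f))(F^{in}_N)^{1/2}\|_{\cL^2}$ for all $t$, but the paper only needs the $\cL^2$ quantity at $t=0$ (via Cauchy--Schwarz to bound $d^T_N(0)$). The quantity it actually propagates is the trace-norm distance
$d^T_N(t)=\sup_{B\in\cW[T]}\|((\cM_N(t)-\scrR(t))(S(t,0)BS(0,t)))F^{in}_N\|_{\cL^1}$,
built over a \emph{time-horizon-dependent} class $\cW[T]=\bigcup_{0\le\tau\le T}S(0,\tau)\OP^W_\hbar[\cB^{d+5}]S(\tau,0)$. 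This device — a different distance for each final time $T$, with test operators conjugated by the adjoint Hartree flow — is what makes the Gronwall step close: it turns the interaction term into something controlled by $d^T_N(s)$ for $s\le T$ plus a $1/N$ error. Your proposal does not set this up, and the differential inequality for $G_N(t)$ you write down is asserted rather than derived; the quadratic interaction term does not obviously linearize in the way you claim (``absorbed into the fluctuation part at shifted Fourier modes''), nor does the $\cL^2$ fluctuation piece propagate under the empirical-measure equation.

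In summary: right skeleton, correct identification of the $O(1/\sqrt N)$ source at $t=0$, but the Gronwall closure, the semiclassical commutator lemma and the treatment of the $1/N$ remainder — precisely the parts that make the estimate uniform in $\hbar\in(0,1]$ — are either omitted, asserted, or attributed to the wrong mechanism.
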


\smallskip
The definition and the elementary properties of the Wigner transform are recalled in the Appendix. We recall that, in the classical limit, i.e. for $\hbar\to 0$, quantum densities propagated by the Heisenberg-von Neumann equations will 
typically fail to converge to any limiting density operator. However, up to extracting subsequences $\hbar_n\to 0$, the corresponding sequence of Wigner transforms will have limit points in $\cS'(\bR^d\times\bR^d)$, and the classical 
limit of quantum mechanics can be described in terms of the evolution of these limit points. See \cite{LionsPaul} for a presentation of this description of the classical limit (especially Theorems III.1 and IV.1 in \cite{LionsPaul}).

\smallskip
The $O(1/\sqrt{N})$ estimate in this result is on a par with the convergence rate obtained in \cite{RodSchlein} --- except \eqref{UnifCvRate} is uniform in $\hbar\in(0,1]$, unlike the bound in \cite{RodSchlein}. On the other hand, one should
keep in mind that the regularity assumptions on the potential are more stringent in Theorem \ref{T-MFUnif} than in \cite{RodSchlein}, which can handle singular potentials, including the Coulomb case. 

Notice also the double exponential growth in time of the bound in Theorem \ref{T-MFUnif}, to be compared either with the simple exponential growth in the nonuniform in $\hbar$ convergence rate in \cite{RodSchlein}, or in the uniform
as $\hbar\to 0$ estimate in \cite{FGMouPaul}. 

A last difference between the convergence rate obtained in \cite{RodSchlein, ChenLeeSchlein}  and the bound in Theorem \ref{T-MFUnif} lies in the metric used to measure the difference between the Hartree solution and the first marginal
of the $N$-body density operator: the estimate in \cite{RodSchlein,ChenLeeSchlein} is expressed in terms of the trace norm, whereas \eqref{UnifCvRate} is formulated in terms of Wigner transforms, whose difference is measured in 
some dual Sobolev norm. In the classical limit, quantum particles are expected to be perfectly localized on phase-space curves, and Schatten norms (including the trace norm) are ill-suited to capturing this asymptotic behavior --- see 
section 5 of \cite{FGPaulCohSta} for a detailed discussion of this point.

\subsection{Empirical Measure of $N$-Particle Systems in Classical Mechanics}

%%%%%%%%%%%%%%%%%%%%%%%%%%%%%%%%%%%%%%%%%%%%%%%%%%%%%%%%%%%%%%%%%%%%%%%%%%%%%%%%%%%%%%%%%%%%%%%%%%%%%%%%%%%%%%%%%%%%%%%%%

As mentioned above, an important ingredient in the rigorous derivation of the Vlasov equation from the $N$-particle system of Newton's equations in classical mechanics (see \cite{BraunHepp,Dobrushin}) is the following remarkable 
property of the empirical measure, which is briefly recalled below for the reader's convenience.

Consider the system of Newton's equations of motion for a system of $N$ identical point particles of mass $1$, with pairwise interaction given by a à (real-valued) potential $V/N$ assumed to be even and smooth (at least $C^2$) on $\bR^d$, 
and such that $V$, $\grad V$ and $\grad^2V$ are bounded on $\bR^d$:
\be\lb{NBodyC}
\left\{
\ba
\dot{x}_k&=\xi_k\,,&&\qquad x_k(0)=x_k^{in}\,,
\\	\\
\dot\xi_k&=-\frac1N\sum_{l=1}^N\grad V(x_k-x_l)\,,&&\qquad\xi_k(0)=\xi_k^{in}\,.
\ea
\right.
\ee
(The term corresponding to $l=k$ in the sum is equal to $0$ since $V$ is even). We denote by $T^N_t$ the flow defined by the differential system (\ref{NBodyC}).

Set $z_k:=(x_k,\xi_k)$ and $Z_N:=(z_1,\ldots,z_N)$. To each $N$-tuple $Z_N\in(\bR^d\times\bR^d)^N$, one associates the empirical measure
\be\lb{DefEmpMeas0}
\mu_{Z_N}:=\frac1N\sum_{k=1}^N\de_{z_k}\,.
\ee
One defines in this way a map
$$
(\bR^d\times\bR^d)^N\ni Z_N\mapsto\mu_{Z_N}\in\cP(\bR^d\times\bR^d)
$$
(where $\cP(X)$ designates the set of Borel probability measures on $X$). The system (\ref{NBodyC})  takes the form
$$
\dot{x}_k=\xi_k\,,\qquad\dot\xi_k=-\int_{\bR^d\times\bR^d}\grad V(x_k-y)\mu_{Z_N}(\dd y\dd\eta)\,,\qquad 1\le k\le N\,,
$$
and this implies the following remarkable result.

\begin{Prop}\lb{P-WSolVlasov}
For each $N\ge 1$, the map $t\mapsto\mu_{T^N_tZ_N}$ is continuous on $\bR$ with values in $\cP(\bR^d\times\bR^d)$ equipped with the weak topology, and is a weak solution (in the sense of distributions) of the Vlasov equation
\be\lb{Vlasov}
\d_tf+\xi\cdot\grad_xf-\Div_\xi\left(f(\grad_xV\star_{x,\xi}f)\right)=0\,.
\ee
\end{Prop}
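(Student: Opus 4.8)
The plan is to verify directly that the measure-valued curve $t\mapsto\mu_{T^N_tZ_N}$ satisfies the weak formulation of \eqref{Vlasov}. Fix a test function $\phi\in C^\infty_c(\bR^d\times\bR^d)$ and consider
$$
\frac{d}{dt}\la\mu_{T^N_tZ_N},\phi\ra=\frac{d}{dt}\frac1N\sum_{k=1}^N\phi(x_k(t),\xi_k(t))\,.
$$
Since the $z_k(t)=(x_k(t),\xi_k(t))$ are genuine $C^1$ solutions of the ODE system \eqref{NBodyC} (global existence and uniqueness being guaranteed by the boundedness of $\grad V$, which makes the right-hand side globally Lipschitz), the chain rule gives
$$
\frac{d}{dt}\la\mu_{T^N_tZ_N},\phi\ra=\frac1N\sum_{k=1}^N\Big(\xi_k(t)\cdot\grad_x\phi(z_k(t))-\Big(\tfrac1N\sum_{l=1}^N\grad V(x_k(t)-x_l(t))\Big)\cdot\grad_\xi\phi(z_k(t))\Big)\,.
$$
The first key step is to recognize the inner sum as an integral against the empirical measure itself: $\tfrac1N\sum_l\grad V(x_k-x_l)=\int\grad_xV(x_k-y)\,\mu_{T^N_tZ_N}(\dd y\,\dd\eta)=(\grad_xV\star_{x,\xi}\mu_{T^N_tZ_N})(z_k(t))$. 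Substituting this back, the whole right-hand side becomes $\la\mu_{T^N_tZ_N},\xi\cdot\grad_x\phi-(\grad_xV\star_{x,\xi}\mu_{T^N_tZ_N})\cdot\grad_\xi\phi\ra$, which is exactly the weak form of \eqref{Vlasov} tested against $\phi$ after an integration by parts in the divergence term. Integrating in $t$ from $0$ against a compactly supported time cutoff then yields the distributional identity.

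The continuity statement is handled first, as it is needed to make sense of the integrated weak formulation: for each fixed $\phi\in C_b(\bR^d\times\bR^d)$, the map $t\mapsto\la\mu_{T^N_tZ_N},\phi\ra=\tfrac1N\sum_k\phi(z_k(t))$ is continuous because $t\mapsto z_k(t)$ is continuous and $\phi$ is continuous; since $C_b$ test functions (indeed $C_c$ ones) determine the weak topology on $\cP(\bR^d\times\bR^d)$, this gives weak continuity of $t\mapsto\mu_{T^N_tZ_N}$. One should also note $\mu_{T^N_tZ_N}\in\cP(\bR^d\times\bR^d)$ is immediate since it is a convex combination of Dirac masses.

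There is essentially no hard obstacle here: the proof is a bookkeeping exercise combining the chain rule with the self-referential rewriting of the interaction term. The only point demanding a little care is the manipulation of the convolution $\star_{x,\xi}$ in \eqref{Vlasov} — making sure the $\xi$-variable is integrated out trivially (the potential depends only on $x$) so that $\grad_xV\star_{x,\xi}\mu$ is the function $z\mapsto\int\grad_xV(x-y)\,\mu(\dd y\,\dd\eta)$ — and checking that moving $\grad_\xi$ off $\phi$ onto $\mu$ via the $\Div_\xi$ in \eqref{Vlasov} matches signs correctly in the distributional pairing. Because everything is smooth and the sums are finite, no regularization, approximation, or compactness argument is required; the identity holds pointwise in $t$ and integrates cleanly.
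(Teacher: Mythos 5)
Your proof is correct and is exactly the approach the paper alludes to (it states that Proposition \ref{P-WSolVlasov} "is based on the method of characteristics" but does not write out a proof, referring instead to \cite{NeunzertWick,BraunHepp,Dobrushin}): you verify the weak formulation by differentiating $\la\mu_{T^N_tZ_N},\phi\ra$ along the characteristic trajectories via the chain rule, and then observe that the force term rewrites self-referentially as $\grad_xV\star_{x,\xi}\mu_{T^N_tZ_N}$ evaluated at $z_k(t)$. The sign bookkeeping after integrating the $\Div_\xi$ term by parts checks out, and the continuity argument is correct.
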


\smallskip
With Proposition \ref{P-WSolVlasov}, the derivation of the Vlasov equation (\ref{Vlasov}) from Newton's equations (\ref{NBodyC}) for $N$-particle systems is equivalent to the continuous dependence of weak solutions of (\ref{Vlasov}) 
on the initial data for the weak topology of Borel probability measures on the single particle phase-space. 

In view of the conceptual simplicity of the approach of the mean-field limit in classical mechanics based on the empirical measure (see \cite{NeunzertWick,BraunHepp,Dobrushin}), it seems that a similar structure on the quantum 
$N$-body problem would be extremely helpful for the purpose of deriving the mean-field limit with a uniform convergence rate in the Planck constant $\hbar$.

A notion of empirical measure in quantum mechanics will be introduced in Definition \ref{D-EMQ} below. The equation governing its evolution --- equation \eqref{EMQEq} --- will be derived in Theorem \ref{T-EMQEq}. The Hartree 
equation can be viewed as a ``special case'' of equation \eqref{EMQEq}. More precisely, Theorem \ref{T-Hartree} states that solutions to the Hartree equation can be identified with a special class of solutions to equation \eqref{EMQEq}.

\smallskip
There is an obvious difficulty with the problem addressed in Theorems \ref{T-EMQEq} and \ref{T-Hartree}: indeed, the proof of Proposition \ref{P-WSolVlasov} is based on the method of characteristics for solving transport equations 
such as \eqref{Vlasov}. But there is no obvious analogue of particle trajectories or of the method of charactristics in quantum mechanics. This explains why the proofs of Theorems \ref{T-EMQEq} and \ref{T-Hartree} require a quite 
involved algebraic setting. (See Appendix \ref{A-Hochschild} for additional insight on this algebraic structure).

\subsection{Outline of the paper}

%%%%%%%%%%%%%%%%%%%%%%%%%%%%%%%%%%%%%%%%%%%%%%%%%%%%%%%%%%%%%%%%%%%%%%%%%%%%%%%%%%%%%%%%%%%%%%%%%%%%%%%%%%%%%%%%%%%%%%%%%

Sections \ref{S-EM}  and  \ref{S-EqEM} are focussed on the problem of defining a notion of quantum empirical measure and deriving the equation governing its evolution. The analogue of the empirical measure in quantum mechanics 
proposed here is $\cM_N(t)$ introduced in Definition \ref{D-EMQ} (in section \ref{S-EM}). The equation governing the evolution of $\cM_N(t)$ is \eqref{EMQEq}, established in Theorem \ref{T-EMQEq} (in section \ref{S-EqEM}). 

Of critical importance in \eqref{EMQEq} is the contribution of the interaction term involving the potential $V$. This interaction term can be viewed as a twisted variant of the commutator $[V_{R(t)},R(t)]$ that appears on the right hand 
side of \eqref{Hartree}, and is defined in formula \eqref{DefC} and in Proposition \ref{P-InteractFla}. In fact, equation \eqref{EMQEq} itself is a noncommutative variant of the time-dependent Hartree equation \eqref{Hartree} (in terms 
of density operators). 

More precisely, there exists a special class (defined in \eqref{HartreeEM}) of solutions to the equation \eqref{EMQEq} satisfied by our quantum analogue of the empirical measure, for which this equation is exactly equivalent to the 
time-dependent Hartree equation \eqref{Hartree}: see Theorem \ref{T-Hartree}.

\smallskip
The main application of this new notion of quantum empirical ``measure'' is Theorem \ref{T-MFUnif}, whose proof occupies section \ref{S-EMMFh}. One important ingredient in this proof is a series of auxiliary semiclassical estimates, 
summarized in Lemma \ref{L-[E,B]}, whose proof is deferred to the last section \ref{S-SC}. Various fundamental notions and results on Weyl's quantization are recalled in Appendix \ref{A-Weyl}.

%%%%%%%%%%%%%%%%%%%%%%%%%%%%%%%%%%%%%%%%%%%%%%%%%%%%%%%%%%%%%%%%%%%%%%%%%%%%%%%%%%%%%%%%%%%%%%%%%%%%%%%%%%%%%%%%%%%%%%%%%

\section{A Quantum Analogue of the Notion of Empirical Measure}\lb{S-EM}

%%%%%%%%%%%%%%%%%%%%%%%%%%%%%%%%%%%%%%%%%%%%%%%%%%%%%%%%%%%%%%%%%%%%%%%%%%%%%%%%%%%%%%%%%%%%%%%%%%%%%%%%%%%%%%%%%%%%%%%%%

\subsection{Marginals of $N$-particle densities.}

%%%%%%%%%%%%%%%%%%%%%%%%%%%%%%%%%%%%%%%%%%%%%%%%%%%%%%%%%%%%%%%%%%%%%%%%%%%%%%%%%%%%%%%%%%%%%%%%%%%%%%%%%%%%%%%%%%%%%%%%%

First we recall the notion of $k$-particle marginal of a symmetric probability density $f_N$ on the $N$-particle (classical) phase-space $(\bR^d\times\bR^d)^N$.

For each $p\in[1,+\infty]$, we designate by $L^p_s((\bR^d\times\bR^d)^N)$ the set of functions $\phi\equiv\phi(x_1,\xi_1,\ldots,x_N,\xi_N)$ such that $\phi\in L^p((\bR^d\times\bR^d)^N)$ and 
$$
\phi(x_{\si(1)},\xi_{\si(1)},\ldots,x_{\si(N)},\xi_{\si(N)})=\phi(x_1,\xi_1,\ldots,x_N,\xi_N)\quad\hbox{ a.e. on }(\bR^d\times\bR^d)^N
$$
for all $\si\in\fS_N$.

For each $N>1$, let $f_N\equiv f_N(x_1,\xi_1,\ldots,x_N,\xi_N)$ be a symmetric probability density on the $N$-particle phase-space $(\bR^d\times\bR^d)^N$. In other words, 
$$
f_N\in L^1_s((\bR^d\times\bR^d)^N)\,,\qquad f_N\ge 0\quad\hbox{ a.e. on }(\bR^d\times\bR^d)^N\,,
$$
and
$$
\int_{(\bR^d\times\bR^d)^N}f_N(x_1,\xi_1,\ldots,x_N,\xi_N)\dd x_1\dd\xi_1\ldots\dd x_N\dd\xi_N=1\,.
$$

The $k$-particle marginal of $f_N$ is the symmetric probability density on the $k$-particle phase-space $(\bR^d\times\bR^d)^k$ defined by the formula
$$
\ba
f_{N:k}(x_1,\xi_1,\ldots,x_k,\xi_k)
\\
:=\int_{(\bR^d\times\bR^d)^{N-k}}f_N(x_1,\xi_1,\ldots,x_N,\xi_N)\dd x_{k+1}\dd\xi_{k+1}\ldots\dd x_N\dd\xi_N&\,.
\ea
$$

\smallskip
The analogous notion for symmetric, quantum $N$-particle density operators is defined as follows.

\begin{Def}\label{defmarg}
Let $N\ge 1$ and $F_N\in\cD_s(\fH_N)$. For each $k=1,\ldots,N$, the $k$-particle marginal of $F_N$ is the unique $F_{N:k}\in\cD_s(\fH_k)$ such that
$$
\Tr_{\fH_k}(A_kF_{N:k})=\Tr_{\fH_N}((A_k\otimes I^{\otimes(N-k)})F_N)\,,\qquad\hbox{ for all }A_k\in\cL(\fH_k)\,.
$$
(In particular $F_{N:N}=F_N$.)
\end{Def}

\subsection{A quantum notion of empirical measure.}

%%%%%%%%%%%%%%%%%%%%%%%%%%%%%%%%%%%%%%%%%%%%%%%%%%%%%%%%%%%%%%%%%%%%%%%%%%%%%%%%%%%%%%%%%%%%%%%%%%%%%%%%%%%%%%%%%%%%%%%%%

Let $f^{in}_N$ be a symmetric probability density on $(\bR^d\times\bR^d)^N$, and set
$$
f_N(t,x_1,\xi_1,\ldots,x_N,\xi_N):=f^{in}(T^N_{-t}(x_1,\xi_1,\ldots,x_N,\xi_N))\,,
$$
where $T^N_t$ is the flow defined by the differential system (\ref{NBodyC}). 

Specializing formula (32) in \cite{FGMouhotRicci} to $m=1$ leads to the identity
\be\lb{EMCMargi1}
f_{N:1}(t,x,\xi)\dd x\dd\xi=\int_{(\bR^d\times\bR^d)^N}\mu_{T^N_tZ_N}(\dd x\dd\xi)f^{in}_N(Z_N)\dd Z_N\,,
\ee
which holds for each symmetric probability density $f_N^{in}$ on $(\bR^d\times\bR^d)^N$. Equivalently, equation \eqref{EMCMargi1} can be recast as
\be\lb{EMCMargi1Dual}
\ba
\int_{\bR^d\times\bR^d}\phi(x,\xi)f_{N:1}(t,x,\xi)\dd x\dd\xi
\\
=\int_{(\bR^d\times\bR^d)^N}\left(\int_{\bR^d\times\bR^d}\phi(x,\xi)\mu_{T^N_tZ_N}(\dd x\dd\xi)\right)f^{in}_N(Z_N)\dd Z_N
\ea
\ee
for each test function $\phi\in L^\infty(\bR^d\times\bR^d)$.

In other words, for each $t\in\bR$, the time-dependent, measure-valued function
\be\lb{DefmN}
m_N(t;Z_N,\dd x\dd\xi):=\mu_{T^N_tZ_N}(\dd x\dd\xi)
\ee
is the integral kernel of the (unique) linear map 
\be\lb{DefMN}
M_N(t):\,L^\infty(\bR^d\times\bR^d)\to L^\infty_s((\bR^d\times\bR^d)^N)
\ee
such that
\be\lb{LinMapMN}
\int_{\bR^d\times\bR^d}(M_N(t)\phi)(Z_N)f_N^{in}(\dd Z_N):=\int_{\bR^d\times\bR^d} \phi(x,\xi)f_{N:1}(t,x,\xi)\dd x\dd\xi
\ee
for each symmetric probability density on $(\bR^d\times\bR^d)^N$. That \eqref{LinMapMN} holds for each symmetric probability density $f^{in}_N$ on $(\bR^d\times\bR^d)^N$ implies indeed that
$$
M_N(t)\phi:=\Phi_N\circ T^N_t\,,
$$
where 
$$
\Phi_N(Z_N):=\frac1N\sum_{j=1}^N\phi(x_j,\xi_j)=\int_{\bR^d\times\bR^d}\phi(x,\xi)\mu_{Z_N}(\dd x\dd\xi)\,.
$$

\bigskip
In quantum mechanics, the analogue of the differential system (\ref{NBodyC}) is the $N$-body Schr\"odinger equation 
\be\lb{NSchro}
i\hbar\d_t\Psi_N=\scrH_N\Psi_N
\ee
where $\Psi_N\equiv\Psi_N(t,x_1,\ldots,x_N)\in\bC$ is the $N$-body wave function, and where $\scrH_N$ is the quantum $N$-body Hamiltonian defined in \eqref{NHamilt}, which is recast as
\be\lb{NHamilt2}
\scrH_N:=\sum_{k=1}^N-\tfrac12\hbar^2\Dlt_{x_k}+\frac1N\sum_{1\le k<l\le N}V_{kl}\,.
\ee
For each $k,l=1,\ldots,N$, the notation $V_{kl}$ in \eqref{NHamilt2} designates the (multiplication) operator defined by 
$$
(V_{kl}\Psi_N)(x_1,\ldots,x_N):=V(x_k-x_l)\Psi_N(x_1,\ldots,x_N)\,.
$$
Since $V\in L^\infty(\bR^d)$, the operator $V_{kl}$ is bounded on $\fH_N$ for all $k,l=1,\ldots,N$, and $\scrH_N$ defines a self-adjoint operator on $\fH_N$ with domain $H^2((\bR^d)^N)$. The quantum analogue of the flow $T^N_t$ 
is the unitary group $U_N(t):=e^{it\scrH_N/\hbar}$, and, since the function $V$ is even, one easily checks that
\be\lb{PropaSym}
F^{in}_N\in\cD_s(\fH_N)\Rightarrow F_N(t):=U_N(t)^*F^{in}_NU_N(t)\in\cD_s(\fH_N)\hbox{ for all }t\in\bR\,.
\ee

\smallskip
Formula \eqref{LinMapMN} suggests that the quantum analogue of the empirical measure 
$$
\mu_{T^N_tZ_N}(\dd x\dd\xi)
$$
used in classical mechanics is the time-dependent linear map defined below. 

For each $k=1,\ldots,N$ and each $A\in\cL(\fH)$, we denote by $J_k$ the linear map from $\cL(\fH)$ to $\cL(\fH_N)$ defined by the formula
$$
J_kA:=\underbrace{I\otimes\ldots\otimes I}_{k-1\text{ terms}}\otimes A\otimes\underbrace{I\otimes\ldots\otimes I}_{N-k\text{ terms}}\,.
$$

\begin{Def}\lb{D-EMQ}
For each $N\ge 1$, set
$$
\cM^{in}_N:=\frac1N\sum_{k=1}^NJ_k\in\cL(\cL(\fH),\cL_s(\fH_N))\,.
$$
For all $t\in\bR$, we define $\cM_N(t)\in\cL(\cL(\fH),\cL_s(\fH_N))$ by the formula
$$
\cM_N(t)A:=U_N(t)(\cM^{in}_NA)U_N(t)^*\,.
$$
\end{Def}

\smallskip
With this definition, one easily arrives at the quantum analogue of \eqref{EMCMargi1Dual}.% is the following identity.

\begin{Lem}\lb{L-EMQMargi1}
For each $F^{in}_N\!\in\!\cD_s(\fH_N)$ and all $t\in\bR$, set $F_N(t):=U_N(t)^*F^{in}_NU_N(t)$. Then one has
$$
\Tr_\fH(AF_{N:1}(t))=\Tr_{\fH_N}((\cM_N(t)A)F^{in}_N)\quad\hbox{ for all }A\in\cL(\fH)\,.
$$
\end{Lem}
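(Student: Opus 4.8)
The plan is to reduce the identity to the defining property of the one-particle marginal (Definition \ref{defmarg}), then to exploit the permutation symmetry of $F_N(t)$, and finally to move the propagator $U_N(t)$ across the trace by cyclicity. Concretely, for $A\in\cL(\fH)$ write $J_1A=A\otimes I^{\otimes(N-1)}$; Definition \ref{defmarg} with $k=1$ gives
$$
\Tr_\fH(AF_{N:1}(t))=\Tr_{\fH_N}((A\otimes I^{\otimes(N-1)})F_N(t))=\Tr_{\fH_N}((J_1A)F_N(t))\,.
$$

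Next I would show that $\Tr_{\fH_N}((J_1A)F_N(t))=\Tr_{\fH_N}((J_kA)F_N(t))$ for every $k=1,\ldots,N$. Since $V$ is even, \eqref{PropaSym} gives $F_N(t)\in\cD_s(\fH_N)$, i.e.\ $U_\si F_N(t)U_\si^*=F_N(t)$ for all $\si\in\fS_N$. Choosing $\si$ with $\si(1)=k$ one has $U_\si(J_1A)U_\si^*=J_kA$, hence $J_1A=U_\si^*(J_kA)U_\si$, and by the cyclicity of the trace together with the unitarity of $U_\si$,
$$
\Tr_{\fH_N}((J_1A)F_N(t))=\Tr_{\fH_N}((J_kA)U_\si F_N(t)U_\si^*)=\Tr_{\fH_N}((J_kA)F_N(t))\,.
$$
Averaging over $k=1,\ldots,N$ and using linearity of the trace then yields
$$
\Tr_\fH(AF_{N:1}(t))=\Tr_{\fH_N}\Big(\Big(\tfrac1N\sum_{k=1}^NJ_kA\Big)F_N(t)\Big)=\Tr_{\fH_N}((\cM^{in}_NA)F_N(t))\,.
$$
Finally, substituting $F_N(t)=U_N(t)^*F^{in}_NU_N(t)$ and moving $U_N(t)$ around by cyclicity,
$$
\Tr_{\fH_N}((\cM^{in}_NA)U_N(t)^*F^{in}_NU_N(t))=\Tr_{\fH_N}((U_N(t)(\cM^{in}_NA)U_N(t)^*)F^{in}_N)=\Tr_{\fH_N}((\cM_N(t)A)F^{in}_N)
$$
by Definition \ref{D-EMQ}, which is the asserted identity.

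There is essentially no serious obstacle; the only points requiring care are bookkeeping ones. One should note that all operators appearing inside the traces are legitimately trace-class: $A\in\cL(\fH)$ is bounded, hence so are $J_kA$, $\cM^{in}_NA$ and $\cM_N(t)A$, while $F_N(t)$ and $F^{in}_N$ are density operators, so the products are trace-class and the cyclicity manipulations are justified; and $F_{N:1}(t)\in\cD(\fH)$ is well defined because $F_N(t)\in\cD_s(\fH_N)$. The only genuinely structural input is the symmetry property \eqref{PropaSym}: it is what makes the $k$-independence in the second step work, and it is precisely what allows the symmetrized object $\cM^{in}_N=\tfrac1N\sum_kJ_k$ (rather than a single $J_1$) to appear on the right-hand side.
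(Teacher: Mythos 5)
Your proof is correct and follows exactly the same route as the paper's: reduce to the first marginal via $J_1A$, use the permutation symmetry of $F_N(t)$ (from \eqref{PropaSym}) to replace $J_1$ by $J_k$, average over $k$, and then move $U_N(t)$ across by cyclicity to land on Definition \ref{D-EMQ}. You merely spell out the conjugation argument behind the $k$-independence, which the paper asserts directly in \eqref{Margi1J}.
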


\begin{proof}
Since the density operator $F^{in}_N$ is symmetric, applying \eqref{PropaSym} implies that $F_N(t)\in\cD_s(\fH_N)$ for all $t\in\bR$. Therefore
\be\lb{Margi1J}
\Tr_\fH(AF_{N:1}(t))=\Tr_{\fH_N}((J_1A)F_N(t))=\Tr_{\fH_N}((J_kA)F_N(t))
\ee
for all $k=1,\ldots,N$. Averaging both sides of \eqref{Margi1J} in $k$, we find that
$$
\ba
\Tr_\fH(\!AF_{N:1}(t)\!)\!\!=\!\Tr_{\fH_N}(\!(\cM^{in}_NA)F_N(t)\!)\!=\!\Tr_{\fH_N}(\!(\cM^{in}_NA)U_N(t)^*F^{in}_NU_N(t)\!)
\\
=\!\Tr_{\fH_N}(U_N(t)(\cM^{in}_NA)U_N(t)^*F^{in}_N)\!=\!\Tr_{\fH_N}(\!(\cM_N(t)A)F^{in}_N\!)&\,,
\ea
$$
which is the desired identity.
\end{proof}

%%%%%%%%%%%%%%%%%%%%%%%%%%%%%%%%%%%%%%%%%%%%%%%%%%%%%%%%%%%%%%%%%%%%%%%%%%%%%%%%%%%%%%%%%%%%%%%%%%%%%%%%%%%%%%%%%%%%%%%%%

\section{An Evolution Equation for $\cM_N(t)$}\lb{S-EqEM}

%%%%%%%%%%%%%%%%%%%%%%%%%%%%%%%%%%%%%%%%%%%%%%%%%%%%%%%%%%%%%%%%%%%%%%%%%%%%%%%%%%%%%%%%%%%%%%%%%%%%%%%%%%%%%%%%%%%%%%%%%

In the present section, we seek to establish a quantum analogue of Proposition \ref{P-WSolVlasov} for the time dependent linear map $t\mapsto\cM_N(t)$. At variance with the proof of Proposition \ref{P-WSolVlasov}, which is based on
the method of characteristics for the transport equation, and of which there is no quantum analogue as mentioned above, our approach to this problem is based solely on the first equation in the BBGKY hierarchy.

\subsection{The first BBGKY equation}\lb{SS-BBGKY1}
%%%%%%%%%%%%%%%%%%%%%%%%%%%%%%%%%%%%%%%%%%%%%%%%%%%%%%%%%%%%%%%%%%%%%%%%%%%%%%%%%%%%%%%%%%%%%%%%%%%%%%%%%%%%%%%%%%%%%%%%%

The time dependent density $t\mapsto F_N(t)$ given in terms of the initial quantum density $F^{in}_N$ by the formula 
$$
F_N(t):=U_N(t)^*F^{in}_NU_N(t)\,,
$$
is the solution of the Cauchy problem for the Heisenberg equation \eqref{NHeisen}.

The BBGKY hierarchy is the sequence of differential equations for the marginals $F_{N:k}$ (for $k=1,\ldots,N$) deduced from (\ref{NHeisen}). Because of the pairwise particle interaction, the equation for $F_{N:k}$ always involves 
$F_{N:k+1}$ and is never in closed form for $1\le k\le N-1$. 

Nevertheless, only the first equation in the BBGKY hierarchy is needed in the sequel. It is obtained as follows: let $A\in\cL(\fH)$ satisfy $[\Dlt,A]\in\cL(\fH)$. Multiplying both sides of (\ref{NHeisen}) by $J_1A$ and taking the trace shows 
that
\be\lb{idtF1A}
\ba
i\hbar\d_t\Tr_\fH(AF_{N:1}(t))=&i\hbar\d_t\Tr_{\fH_N}((J_1A)F_N(t))
\\
=&\Tr_{\fH_N}((J_1A)[\scrH_N,F_N(t)])
\\
=&-\Tr_{\fH_N}([\scrH_N,J_1A]F_N(t))\,,
\ea
\ee
where the first equality above follows from the first identity in \eqref{Margi1J}. Next one has
$$
\ba
\left[\scrH_N,J_1A\right]&=\sum_{k=1}^N\left[-\tfrac12\hbar^2\Dlt_{x_k},J_1A\right]+\frac1N\sum_{1\le k<l\le N}[V_{kl},J_1A]
\\
&=J_1\left(\left[-\tfrac12\hbar^2\Dlt,A\right]\right)+\frac1N\sum_{l=2}^N[V_{1l},J_1A]\,.
\ea
$$
Using the first identity in \eqref{Margi1J} with $\left[-\tfrac12\hbar^2\Dlt,A\right]$ in the place of $A$ shows that
\be\lb{rhsidtF1A}
\ba
\Tr_{\fH_N}([\scrH_N,J_1A]F_N(t))=&\Tr_{\fH}\left(\left[-\tfrac12\hbar^2\Dlt,A\right]F_{N:1}(t)\right)
\\
&+\frac1N\sum_{l=2}^N\Tr_{\fH_N}([V_{1l},J_1A]F_N(t))\,.
\ea
\ee
If $\si$ is the transposition exchanging $2$ and $l=3,\ldots,N$, then
$$
U_\si[V_{1l},J_1A]U_\si^*=[V_{12},J_1A]\,,
$$
and since $F_N(t)$ is symmetric, for each $l=3,\ldots,N$, one has
$$
\ba
\Tr_{\fH_N}([V_{1l},J_1A]F_N(t))=&\Tr_{\fH_N}(U_\si[V_{1l},J_1A]U_\si^*F_N(t))
\\
=&\Tr_{\fH_N}([V_{12},J_1A]F_N(t))
\\
=&\Tr_{\fH_N}\left(\left([V_{12},A\otimes I]\otimes I^{\otimes(N-2)}\right)F_N(t)\right)
\\
=&\Tr_{\fH_2}([V_{12},A\otimes I]F_{N:2}(t))\,.
\ea
$$
Hence
\be\lb{SymInteract}
\frac1N\sum_{l=2}^N\Tr_{\fH_N}([V_{1l},J_1A]F_N(t))=\frac{N-1}N\Tr_{\fH_2}([V_{12},A\otimes I]F_{N:2}(t))\,.
\ee

Substituting the second term in the right hand side of \eqref{rhsidtF1A} with the right hand side of \eqref{SymInteract} shows that \eqref{idtF1A} can be put in the form
\be\lb{BBGKY1}
\ba
i\hbar\d_t\Tr_\fH(AF_{N:1}(t))+\Tr_{\fH}\left(\left[-\tfrac12\hbar^2\Dlt,A\right]F_{N:1}(t)\right)
\\
=-\frac{N-1}N\Tr_{\fH_2}([V_{12},A\otimes I]F_{N:2}(t))&\,,
\ea
\ee
for all $A\in\cL(\fH)$ such that $[\Dlt,A]\in\cL(\fH)$, which is the first BBGKY equation\footnote{More precisely, it is the weak formulation of the first BBGKY equation.}.

\medskip
Using Lemma \ref{L-EMQMargi1}, we easily express the left-hand side of \eqref{BBGKY1} in terms of $\cM_N(t)$:
$$
\ba
i\hbar\d_t\Tr_\fH(AF_{N:1}(t))+\Tr_{\fH}\left(\left[-\tfrac12\hbar^2\Dlt,A\right]F_{N:1}(t)\right)
\\
=i\hbar\d_t\Tr_{\fH_N}((\cM_N(t)A)F^{in}_N)+\Tr_{\fH_N}\left(\left(\cM_N(t)\left[-\tfrac12\hbar^2\Dlt,A\right]\right)F^{in}_N\right)&\,.
\ea
$$
For each $\L\in\cL(\cL(\fH),\cL_s(\fH_N))$ and each (possibly unbounded) operator $D$ on $\fH$, we henceforth denote\footnote{By analogy with the notation for the co-adjoint representation of a Lie algebra.} by $\ad^*(D)\L$ the linear map 
defined by
\be\lb{NotLR(D)}
(\ad^*(D)\L)A:=-\L[D,A]
\ee
for all $A\in\cL(\fH)$ such that $[D,A]\in\cL(\fH)$. With this notation, \eqref{BBGKY1} becomes
\be\lb{BBGKY1b}
\ba
\Tr_{\fH_N}\left(\left(\left(i\hbar\d_t\cM_N(t)-\ad^*(-\tfrac12\hbar^2\Dlt)\cM_N(t)\right)A\right)F^{in}_N\right)
\\
=-\frac{N-1}N\Tr_{\fH_2}([V_{12},A\otimes I]F_{N:2}(t))&\,.
\ea
\ee

\subsection{The interaction term}\lb{SS-Interact}
%%%%%%%%%%%%%%%%%%%%%%%%%%%%%%%%%%%%%%%%%%%%%%%%%%%%%%%%%%%%%%%%%%%%%%%%%%%%%%%%%%%%%%%%%%%%%%%%%%%%%%%%%%%%%%%%%%%%%%%%%

The present section is focussed on the problem of expressing the right hand side of \eqref{BBGKY1b} in terms of $\cM_N(t)$, which is the most critical part of our analysis. Since the right hand side of \eqref{BBGKY1b} involves $F_{N:2}$
and cannot be expressed exclusively in terms of $F_{N:1}$, it is not a priori obvious that there is an equation in closed form governing the evolution of $\cM_N(t)$.

\smallskip
First, we introduce some additional notation. For each $\om\in\bR^d$, we denote by $E_\om\in\cL(\fH)$ the multiplication operator defined by the formula
$$
E_\om\psi(x):=e^{i\om\cdot x}\psi(x)\,.
$$
The family of operators $E_\om$ obviously satisfies
$$
E^*_\om=E_{-\om}=E_\om^{-1}\,,
$$
For each $\cL\in\cL(\cL(\fH),\cL_s(\fH_N))$ and each $B\in\cL(\fH)$, denote by $\L(\bu B)$ and $\L(B\bu)$ the elements of $\cL\in\cL(\cL(\fH),\cL_s(\fH_N))$ defined by 
$$
\L(\bu B):\;A\mapsto\L(AB)\quad\hbox{ and }\quad\L(B\bu):\;A\mapsto\L(BA)
$$

\smallskip
\begin{Def}\lb{D-DefC}
For each integer $N\ge 2$, set
\be\lb{DefEN}
\ba
\cE_N:=\{(\L_1,\L_2)\in\cL(\cL(\fH),\cL(\fH_N))^2\hbox{ s.t. for all }F\in\cL^1(\fH_N)\hbox{ the map}&
\\
\om\mapsto\Tr_{\fH_N}((\L_1E^*_\om)(\L_2E_\om)F)\hbox{is continuous on }\bR&\}\,.
\ea
\ee
For each $S\in\cS'(\bR)$ whose Fourier transform $\hat S$ is a bounded measure on $\bR$, and for each $\L_1,\L_2\in\cL(\cL(\fH),\cL(\fH_N))$ such that
$$
(\L_1,\L_2(\bu A))\hbox{ and }(\L_2(A\bu),\L_1)\in\cE_N\quad\hbox{ for all }A\in\cL(\fH)\,,
$$
let $\cC[S,\L_1,\L_2]$ be the element of $\cL(\cL(\fH),\cL(\fH_N))$ defined by the formula
\be\lb{DefC}
\cC[S,\L_1,\L_2]A:=\int_{\bR}\left((\L_1E^*_\om)(\L_2(E_\om A))-(\L_2(AE_\om))(\L_1E^*_\om)\right)\hat S(\dd\om)
\ee
for all $A\in\cL(\fH)$. The linear map $\cC[S,\L_1,\L_2]$ satisfies
\be\lb{BoundCSL1L2}
\|\cC[S,\L_1,\L_2]\|\le 2\|\L_1\|\|\L_2\|\|\hat S\|_{TV}\,.
\ee
\end{Def}

The integral defining $\cC[S,\L_1,\L_2]A$ on the right hand side of \eqref{DefC} is to be understood in the ultraweak sense{\footnote{Let $\fH$ be a separable Hilbert space. The ultraweak topology is the topology defined on $\cL(\fH)$ 
by the family of seminorms $A\mapsto|\Tr_\fH(AF)|$ as $F$ runs through $\cL^1(\fH)$. Let $m$ be a bounded, complex-valued Borel measure on $\bR^d$, and let $f:\,\bR^d\to\cL(\fH)$ be ultraweakly continuous and such that
$$
\sup_{\om\in\bR^d}\|f(\om)\|<\infty\,.
$$ 
Then the linear functional 
$$
\cL^1(\fH)\ni F\mapsto\la L_{f,m},F\ra:=\int_{\bR^d}\Tr_{\fH}(f(x)F)m(\dd\om)\in\bC
$$
is continuous with norm 
$$
\|L_{f,m}\|\le\sup_{\om\in\bR^d}\|f(\om)\|\|m\|_{TV}\,.
$$
Hence there exists a unique $\Phi_m\in\cL(\fH)$ such that $\la L_{f,m},F\ra=\Tr_{\fH}(\Phi_mF)$. The operator $\Phi_m$ is the ultraweak integral of $fm$, denoted
$$
\int_{\bR^d}f(\om)m(\dd\om):=\Phi_m\,.
$$}}. 
Indeed, for each $A\in\cL(\fH)$, the function
$$
\bR^d\ni\om\mapsto (\L_1E^*_\om)(\L_2(E_\om A))-(\L_2(AE_\om))(\L_1E^*_\om)\in\cL(\fH_N)
$$
is ultraweakly continuous since $(\L_1,\L_2(\bu A))$ and $(\L_2(A\bu),\L_1)\in\cE_N$. Moreover
$$
\|(\L_1E^*_\om)(\L_2(E_\om A))-(\L_2(AE_\om))(\L_1E^*_\om)\|\le 2\|\L_1\|\|\L_2\|\|A\|\,.
$$
Hence the integral on the right hand side of \eqref{DefC} is well defined in the ultraweak sense, and satisfies
$$
\|\cC[S,\L_1,\L_2]A\|\le 2\|\L_1\|\|\L_2\|\|A\|\int_{\bR^d}|\hat S|(\dd\om)=2\|\L_1\|\|\L_2\|\|A\|\|\hat S\|_{TV}
$$
for all $A\in\cL(\fH)$, which implies \eqref{BoundCSL1L2}.

\medskip
The main result in this section is the following proposition.

\begin{Prop}\lb{P-InteractFla}
Let $V$ be an even, real-valued element of $\cS'(\bR^d)$ whose Fourier transform $\hat V$ is a bounded measure. Let $F_N$ be the solution of the Cauchy problem for the Heisenberg equation \eqref{NHeisen} 
with initial condition $F^{in}_N$. Then, for each $t\in\bR$, each integer $N>1$ and each $A\in\cL(\fH)$, one has
$$
\frac{N-1}N\Tr_{\fH_2}\left([V_{12},A\otimes I]F_{N:2}(t)\right)=\Tr_{\fH_N}\left((\cC[V,\cM_N(t),\cM_N(t)]A)F^{in}_N\right)
$$
where $\cC[V,\cM_N(t),\cM_N(t)]A$ is the element of $\cL_s(\fH_N)$ defined in \eqref{DefC}.
\end{Prop}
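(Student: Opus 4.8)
The plan is to unravel both sides of the asserted identity in terms of the unitary group $U_N(t)$ and the operators $E_\om$, using the Fourier representation $V(x)=\tfrac1{(2\pi)^d}\int_{\bR^d}\hat V(\om)e^{i\om\cdot x}\dd\om$, and to match the interaction term of the second BBGKY marginal against the definition \eqref{DefC} with $\L_1=\L_2=\cM_N(t)$. First I would rewrite the multiplication operator $V_{12}$ on $\fH_N$ as a superposition of the rank-one-in-Fourier operators $J_1 E_\om\cdot J_2 E_{-\om}$: since $V(x_1-x_2)=\tfrac1{(2\pi)^d}\int \hat V(\om)e^{i\om\cdot x_1}e^{-i\om\cdot x_2}\dd\om$, one has, at least in the ultraweak/distributional sense,
$$
V_{12}=\int_{\bR^d}(J_1E_\om)(J_2E_{-\om})\,\widehat{V}_{\mathrm{norm}}(\dd\om),
$$
where $\widehat V_{\mathrm{norm}}$ denotes $\hat V$ together with the $(2\pi)^{-d}$ factor (I would fix the normalization to agree with the $\hat S$ convention used in Definition \ref{D-DefC}). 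Substituting this into $[V_{12},A\otimes I]$ and expanding the commutator gives two families of terms, $J_1 E_\om\, (A\otimes I)\, J_2 E_{-\om}$ and $J_2 E_{-\om}\,(A\otimes I)\,J_1 E_\om$; since $A\otimes I$ acts only on the first factor, $A\otimes I$ commutes with $J_2 E_{-\om}$, and one can reorganize each term as a product of the form $(J_1 E_\om)(J_1(E_{-\om}A))$ times $J_2 E_{-\om}$, or similar.

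The second and main step is to use the symmetry of $F_N(t)$ (guaranteed by \eqref{PropaSym}) to replace the fixed indices $1,2$ by an average over pairs. Concretely, because $F_N(t)\in\cD_s(\fH_N)$, for any bounded operator $B_{12}$ on the first two factors one has $\Tr_{\fH_N}(B_{12}\otimes I^{\otimes(N-2)}\,F_N(t))=\tfrac{1}{N(N-1)}\sum_{j\ne k}\Tr_{\fH_N}(B_{jk}\,F_N(t))$, where $B_{jk}$ denotes the operator $B$ placed on factors $j,k$. Applying this to $B=[V_{12},A\otimes I]$ turns the left-hand side, up to the prefactor $\tfrac{N-1}{N}$, into
$$
\tfrac1N\sum_{j\ne k}\int_{\bR^d}\Tr_{\fH_N}\Big((J_jE_\om)(J_j(E_{-\om}A))(J_kE_{-\om})F_N(t)\Big)\widehat V_{\mathrm{norm}}(\dd\om)\ -\ (\text{the transposed term}),
$$
where I have used $(A\otimes I)$-placement-on-$j$ freely because it commutes with anything carried on factor $k$. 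Now I recognize $\tfrac1N\sum_j J_j E_\om=\cM^{in}_N E_\om$, conjugated appropriately, and — crucially — push the $U_N(t),U_N(t)^*$ through: writing $F_N(t)=U_N(t)^*F^{in}_N U_N(t)$ and using cyclicity of the trace, each factor $J_j B$ against $F_N(t)$ becomes $U_N(t)(J_j B)U_N(t)^*$ against $F^{in}_N$, i.e. exactly $\cM_N(t)$ evaluated on $B$ after the $k$-average is also absorbed. The two sums over $j$ and over $k$ independently reconstruct the two factors $\cM_N(t)E^*_{-\om}=\cM_N(t)E_\om$-type operators multiplied together in $\cL_s(\fH_N)$, matching the bilinear structure $(\cM_N(t)E^*_\om)(\cM_N(t)(E_\om A))$ and its transpose in \eqref{DefC}. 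Comparing with the definition of $\cC[V,\cM_N(t),\cM_N(t)]A$ then finishes the identity.

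The hard part — and where I expect to spend the most care — is the bookkeeping of the \emph{two independent averages} over $j$ and $k$ and the verification that cross terms $j=k$ are harmless, together with the justification that all the manipulations (interchanging the $\om$-integral with the traces, pushing $U_N(t)$ through the ultraweak integral, and splitting the double sum into a product of two single sums) are legitimate. The legitimacy rests on: (i) $\hat V$ being a bounded measure, so the $\om$-integrals converge in total variation and Fubini applies against any trace-class $F^{in}_N$; (ii) the hypothesis that the relevant pairs lie in $\cE_N$, which is automatic here because $\om\mapsto (\cM_N(t)E^*_\om)(\cM_N(t)(E_\om A))$ is ultraweakly continuous (conjugation by $U_N(t)$ and the strong continuity of $\om\mapsto E_\om$ give this); and (iii) the uniform bound $\|\cM_N(t)\|\le 1$ (each $J_k$ is isometric-like and the average is a convex combination, conjugation by a unitary preserves norm), which makes \eqref{BoundCSL1L2} applicable and the object $\cC[V,\cM_N(t),\cM_N(t)]A$ well-defined in $\cL_s(\fH_N)$. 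A minor additional point is to check that $\cC[V,\cM_N(t),\cM_N(t)]A$ is genuinely symmetric under $\fS_N$, which follows since $\cM_N(t)$ takes values in $\cL_s(\fH_N)$ and the product of two symmetric operators relative to the same $U_\si$-action is again symmetric; I would record this explicitly so that the right-hand side is a legitimate pairing against $F^{in}_N$.
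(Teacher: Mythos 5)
Your overall strategy is the same as the paper's: expand $V_{12}$ over its Fourier modes $E_\om\otimes E^*_\om$, use the permutation symmetry of $F_N(t)$ to convert the pair $(1,2)$ into a double average over particle indices, and then match the result against the twisted bilinear structure of \eqref{DefC}. The paper's proof does exactly this, organized through two preparatory lemmas: one gives the ``Lie morphism up to $1/N$'' identity $[\cM^{in}_N A,\cM^{in}_N B]=\tfrac1N\cM^{in}_N[A,B]$, and the other the pairing
$$
\Tr_{\fH_N}((\cM^{in}_NA)(\cM^{in}_NB)F_N)=\tfrac{N-1}{N}\Tr_{\fH_2}((A\otimes B)F_{N:2})+\tfrac1N\Tr_\fH(ABF_{N:1})\,.
$$
Your plan is compatible with these, so I do not think the route itself is wrong. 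However, there are two points where what you wrote is either an error or leaves the actual crux unaddressed.

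First, there is a concrete algebraic slip. From $V_{12}(A\otimes I)=\int (E_\om A\otimes E^*_\om)\,\hat V_{\mathrm{norm}}(\dd\om)$, the relevant operator on factor $j$ is $J_j(E_\om A)$, not $(J_jE_\om)(J_j(E_{-\om}A))$; the latter collapses to $J_j(E_\om E_{-\om}A)=J_jA$, which loses the $\om$-dependence on factor $j$ entirely and cannot be right. Similarly, your displayed formula with the factor $(J_jE_\om)(J_j(E_{-\om}A))(J_kE_{-\om})$ reduces to $(J_jA)(J_kE_{-\om})$. You want $(J_j(E_\om A))(J_kE^*_\om)$ for one half of the commutator and $(J_j(AE_\om))(J_kE^*_\om)$ for the other.

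Second, and more importantly, the claim that the diagonal $j=k$ terms are ``harmless'' is precisely the content of the argument and cannot be waved away. The double sum $\sum_{j\ne k}$ is \emph{not} the product of two single sums: one has
$$
\tfrac1{N^2}\sum_{j\ne k}(J_jB_1)(J_kB_2)=(\cM^{in}_NB_1)(\cM^{in}_NB_2)-\tfrac1N\cM^{in}_N(B_1B_2)\,,
$$
and the last term is genuinely of order $1/N$ and does \emph{not} vanish. What actually happens is that when you form the \emph{difference} of the two terms in \eqref{DefC}, with the specific \emph{twisted} ordering $(\L_1E^*_\om)(\L_2(E_\om A))-(\L_2(AE_\om))(\L_1E^*_\om)$, the diagonal contributions are $\tfrac1N\Tr_\fH(E^*_\om E_\om A\,F_{N:1})$ and $\tfrac1N\Tr_\fH(AE_\om E^*_\om F_{N:1})$, both equal to $\tfrac1N\Tr_\fH(AF_{N:1})$, so they cancel. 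Getting this requires two things you do not mention: (i) matching the ``off-diagonal'' pieces to $\Tr_{\fH_2}((E_\om A\otimes E^*_\om)F_{N:2})$ and $\Tr_{\fH_2}((AE_\om\otimes E^*_\om)F_{N:2})$ uses the $\fS_2$-symmetry of $F_{N:2}$ to flip the slots, e.g. $\Tr_{\fH_2}((E^*_\om\otimes E_\om A)F_{N:2})=\Tr_{\fH_2}((E_\om A\otimes E^*_\om)F_{N:2})$; and (ii) the cancellation of the diagonals depends crucially on the \emph{order} of the two $\cM_N$ factors in \eqref{DefC} being twisted, i.e.\ $\cM_N E^*_\om$ on the \emph{left} in the first term and on the \emph{right} in the second. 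With the naive, ``untwisted'' ordering the two diagonal pieces would be $\tfrac1N\Tr_\fH(E_\om AE^*_\om F_{N:1})$ and $\tfrac1N\Tr_\fH(AF_{N:1})$, which are \emph{not} equal and do \emph{not} cancel. The paper handles all of this in one stroke via the identity $[E_\om,A]E^*_\om=[E_\om A,E^*_\om]$ together with the Lie-morphism lemma, which simultaneously produces the twisted ordering and absorbs the $1/N$ remainder. Your proposal needs a precise version of exactly that mechanism; as written, the assertion of ``harmlessness'' is a genuine gap rather than a technicality.
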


\smallskip
\begin{proof}
The proof of this proposition is based on a rather involved computation and will be decomposed in several steps.

\smallskip
\noindent
\textit{Step 1.} The next two lemmas are elementary, but of crucial importance in the sequel.

\begin{Lem}\lb{L-ComMu}
For each integer $N\ge 1$ and each $A,B\in\cL(\fH)$, one has
$$
[J_kA,J_lB]=\de_{kl}J_k[A,B]\,,\qquad k,l=1,\ldots,N\,,
$$
and
$$
[\cM^{in}_NA,\cM^{in}_NB]=\frac1N\cM^{in}_N[A,B]\,.
$$
\end{Lem}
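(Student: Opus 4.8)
The plan is to derive both identities from the elementary multiplicativity of tensor products of operators, namely $(C_1\otimes\cdots\otimes C_N)(D_1\otimes\cdots\otimes D_N)=(C_1D_1)\otimes\cdots\otimes(C_ND_N)$ for bounded operators $C_j,D_j$ on $\fH$. No passage to integral kernels or any approximation is needed, so I do not anticipate a genuine obstacle here: the content is purely the careful bookkeeping of the tensor slots occupied by $A$ and $B$.

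First I would treat the commutator $[J_kA,J_lB]$ by cases. For $k=l$: the operators $J_kA$ and $J_kB$ have all tensor factors equal to $I$ except the $k$-th, so by the multiplicativity recalled above $(J_kA)(J_kB)=J_k(AB)$ and $(J_kB)(J_kA)=J_k(BA)$, whence $[J_kA,J_kB]=J_k(AB)-J_k(BA)=J_k[A,B]$, using that $J_k$ is linear. For $k\ne l$: the product $(J_kA)(J_lB)$ is the operator on $\fH_N$ with $A$ in the $k$-th slot, $B$ in the $l$-th slot and $I$ in every other slot, and the product $(J_lB)(J_kA)$ is the \emph{same} operator, since multiplication in two distinct tensor factors is carried out independently and hence the order is immaterial; therefore $[J_kA,J_lB]=0$. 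Combining the two cases gives $[J_kA,J_lB]=\de_{kl}J_k[A,B]$.

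For the second identity I would simply expand by bilinearity of the commutator and invoke the first one:
\[
[\cM^{in}_NA,\cM^{in}_NB]=\frac1{N^2}\sum_{k,l=1}^N[J_kA,J_lB]=\frac1{N^2}\sum_{k,l=1}^N\de_{kl}J_k[A,B]=\frac1{N^2}\sum_{k=1}^NJ_k[A,B]\,,
\]
and since $\cM^{in}_N=\tfrac1N\sum_{k=1}^NJ_k$, the right-hand side equals $\tfrac1N\cM^{in}_N[A,B]$, which is the assertion. (The case $N=1$ reduces to the trivial identity $[A,B]=[A,B]$, so nothing extra is required.) The only step deserving a modicum of attention is the vanishing of $[J_kA,J_lB]$ for $k\ne l$; everything else is formal.
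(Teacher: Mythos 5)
Your proof is correct and follows essentially the same route as the paper: the paper declares the identity $[J_kA,J_lB]=\de_{kl}J_k[A,B]$ obvious and then obtains the second identity by expanding $[\cM^{in}_NA,\cM^{in}_NB]$ bilinearly, exactly as you do. You merely spell out the case analysis $k=l$ versus $k\ne l$ that the paper leaves implicit.
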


\begin{proof}[Proof of Lemma \ref{L-ComMu}]
The first formula being obvious, one has
$$
\ba
\left[\cM^{in}_NA,\cM^{in}_NB\right]=&\frac1{N^2}\sum_{k,l=1}^N[J_kA,J_lB]
\\
=&\frac1{N^2}\sum_{k,l=1}^N\de_{kl}J_k[A,B]=\frac1{N^2}\sum_{k=1}^NJ_k[A,B]=\frac1N\cM^{in}_N[A,B]\,.
\ea
$$
\end{proof}

\begin{Lem}\lb{L-MuF2} For each $N\ge 1$, each $A,B\in\cL(\fH)$ and each $F_N\in\cL^1_s(\fH_N)$, one has
$$
\Tr_{\fH_N}((\cM^{in}_NA)(\cM^{in}_NB)F_N)=\frac{N-1}N\Tr_{\fH_2}((A\otimes B)F_{N:2})+\frac1N\Tr_{\fH}(ABF_{N:1})\,.
$$
\end{Lem}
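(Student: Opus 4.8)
The plan is to expand both sides in terms of the maps $J_k$ and use the defining property of marginals. First I would write
$$
(\cM^{in}_NA)(\cM^{in}_NB)=\frac1{N^2}\sum_{k,l=1}^N(J_kA)(J_lB)\,,
$$
and split the double sum into the diagonal part $k=l$ and the off-diagonal part $k\neq l$. For $k=l$ one has $(J_kA)(J_kB)=J_k(AB)$, since the two operators act nontrivially only on the $k$-th tensor factor; summing over $k$ gives $\frac1{N^2}\sum_{k=1}^NJ_k(AB)=\frac1N\cM^{in}_N(AB)$. For $k\neq l$, the operator $(J_kA)(J_lB)$ acts as $A$ on the $k$-th factor, $B$ on the $l$-th factor, and the identity elsewhere; there are $N(N-1)$ such terms, so by symmetry (each contributes equally when paired against the symmetric density $F_N$) their total trace against $F_N$ equals $\frac{N(N-1)}{N^2}=\frac{N-1}{N}$ times the trace of a single representative term.

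Next I would take the trace against $F_N$ term by term. For the diagonal contribution,
$$
\Tr_{\fH_N}\Bigl(\tfrac1N\cM^{in}_N(AB)F_N\Bigr)=\tfrac1N\Tr_{\fH}(AB\,F_{N:1})\,,
$$
using Lemma \ref{L-EMQMargi1} (or directly the first identity in \eqref{Margi1J} applied to the operator $AB$, which averages the single-particle insertions and collapses to the first marginal). For a representative off-diagonal term, say $k=1$, $l=2$, one has $(J_1A)(J_2B)=(A\otimes B)\otimes I^{\otimes(N-2)}$, and Definition \ref{defmarg} of the $2$-particle marginal gives
$$
\Tr_{\fH_N}\bigl(((A\otimes B)\otimes I^{\otimes(N-2)})F_N\bigr)=\Tr_{\fH_2}((A\otimes B)F_{N:2})\,.
$$
For a general off-diagonal pair $(k,l)$ with $k\neq l$, one conjugates by a suitable permutation $U_\si$ mapping $\{k,l\}$ to $\{1,2\}$ (in order), uses $U_\si(J_kA)(J_lB)U_\si^*=(J_1A)(J_2B)$ together with the symmetry $U_\si F_NU_\si^*=F_N$ and cyclicity of the trace, exactly as in the reduction \eqref{SymInteract} in Section \ref{SS-BBGKY1}. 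Collecting the $N(N-1)$ equal contributions and dividing by $N^2$ yields the claimed $\frac{N-1}{N}\Tr_{\fH_2}((A\otimes B)F_{N:2})$, and adding the diagonal piece completes the identity.

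There is no real obstacle here; the only point requiring a modicum of care is the bookkeeping of the off-diagonal terms and the justification that conjugation by $U_\si$ sends $(J_kA)(J_lB)$ to $(J_1A)(J_2B)$ with $A$ and $B$ in the correct order — one must choose $\si$ with $\si(k)=1$ and $\si(l)=2$ rather than an arbitrary permutation swapping the pair. Everything else is linearity, the elementary relations $[J_kA,J_lB]=\delta_{kl}J_k[A,B]$ (from Lemma \ref{L-ComMu}, whose multiplicative analogue $(J_kA)(J_kB)=J_k(AB)$ is proved the same way) and the definitions of the first and second marginals.
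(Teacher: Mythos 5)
Your proof is correct and takes essentially the same route as the paper: expand $(\cM^{in}_NA)(\cM^{in}_NB)$ as a double sum over $J_k A$ and $J_l B$, split into the diagonal part (giving $J_k(AB)$, hence $\frac1N\Tr_\fH(ABF_{N:1})$) and the off-diagonal part (reduced by the symmetry of $F_N$ to the single term $\Tr_{\fH_2}((A\otimes B)F_{N:2})$ with multiplicity $N(N-1)$). Your remark about choosing the permutation so that $A$ lands in slot $1$ and $B$ in slot $2$, rather than an arbitrary transposition of $\{k,l\}$, is a correct point of care that the paper leaves implicit.
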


\begin{proof}[Proof of Lemma \ref{L-MuF2}]
Since $F_N\in\cL^1_s(\fH_N)$, one has
$$
\Tr_{\fH_N}((J_kA)(J_lB)F_N)=\Tr_{\fH_N}((J_1A)(J_2B)F_N)=\Tr_{\fH_2}((A\otimes B)F_{N:2})
$$
for all $A,B\in\cL(\fH)$, if $1\le k\not=l\le N$, while
$$
\Tr_{\fH_N}((J_kA)(J_kB)F_N)=\Tr_{\fH_N}(J_k(AB)F_N)=\Tr_{\fH_1}(ABF_{N:1})
$$
for all $k=1,\ldots,N$. Hence
$$
\ba
\Tr_{\fH_N}((\cM^{in}_NA)(\cM^{in}_NB)F_N)
\\
=\frac1{N^2}\sum_{1\le k\not=l\le N}\Tr_{\fH_N}((J_kA)(J_lB)F_N)+\frac1{N^2}\sum_{k=1}^N\Tr_{\fH_N}((J_kA)(J_kB)F_N)&
\\
=\frac{N(N-1)}{N^2}\Tr_{\fH_2}((A\otimes B)F_{N:2})+\frac1N\Tr_{\fH_1}(ABF_{N:1})&\,.
\ea
$$
\end{proof}

\smallskip
\noindent
\textit{Step 2.} For each $\phi_1,\phi_2,\psi_1,\psi_2\in\fH$, one has
$$
\Tr_{\fH_2}((E_\om\otimes E^*_\om)|\phi_1\otimes\phi_2\ra\la\psi_1\otimes\psi_2|)|=\widehat{\overline{\psi_1}\phi_1}(-\om)\widehat{\overline{\psi_2}\phi_2}(\om)
$$
so that, by density in $\cL^1(\fH_2)$ of the linear span of rank-one projections of the form $|\phi_1\otimes\phi_2\ra\la\psi_1\otimes\psi_2|$ and dominated convergence, the function 
$$
\bR^d\ni\om\mapsto E^*_\om\otimes E_\om\in\cL(\fH_2)
$$ 
is ultraweakly continuous. Besides, by definition of $V_{12}$,
$$
(V_{12}\phi_1\otimes\phi_2)(x,y)=V(x-y)\phi_1(x)\phi_2(y)\,,
$$
so that
\be\lb{TrVrk1}
\ba
\la\psi_1\otimes\psi_2|V_{12}|\phi_1\otimes\phi_2\ra=\iint_{\bR^d\times\bR^d}V(x-y)\overline{\psi_1(x)}\phi_1(x)\overline{\psi_2(y)}\phi_2(y)\dd x\dd y
\\
=\int_{\bR^d}\overline{\psi_2(y)}\phi_2(y)\left(\tfrac1{(2\pi)^d}\int_{\bR^d}\widehat{\overline{\psi_1}\phi_1}(-\om)e^{-i\om\cdot y}\hat V(\dd\om)\right)\dd y
\\
=\tfrac1{(2\pi)^d}\int_{\bR^d}\widehat{\overline{\psi_1}\phi_1}(-\om)\widehat{\overline{\psi_2}\phi_2}(\om)\hat V(\dd\om)&\,.
\ea
\ee
Since $\|E^*_\om\otimes E_\om\|\le 1$, we conclude from \eqref{TrVrk1} that
$$
V_{12}=\tfrac1{(2\pi)^d}\int_{\bR^d}E_\om\otimes E^*_\om\hat V(\dd\om)
$$
in the ultraweak sense. In other words
$$
\Tr_{\fH_2}(V_{12}G)=\tfrac1{(2\pi)^d}\int_{\bR^d}\Tr_{\fH_2}(E_\om\otimes E^*_\om G)\hat V(\dd\om)
$$
for all $G\in\cL^1(\fH_2)$. Setting successively $G=(A\otimes I)F_{N:2}(t)$ and $G=F_{N:2}(t)(A\otimes I)$ in this identity, we conclude that
\be\lb{Ident0}
\ba
\Tr_{\fH_2}([V_{12},A\otimes I]F_{N:2}(t))
\\
=\tfrac1{(2\pi)^d}\int_{\bR^d}\hat V(\om)\Tr_{\fH_2}(([E_\om,A]\otimes E^*_\om)F_{N:2}(t))\dd\om&\,.
\ea
\ee

\smallskip
\noindent
\textit{Step 3.} At this point, we use Lemma \ref{L-MuF2} and obtain the identity
\be\lb{Ident1}
\ba
\tfrac{N-1}N\Tr_{\fH_2}(([E_\om,A]\otimes E^*_\om)F_{N:2}(t))
\\
=\Tr_{\fH_N}((\cM^{in}_N[E_\om,A])(\cM^{in}_NE^*_\om)F_N(t))-\tfrac1N\Tr_{\fH}([E_\om,A]E^*_\om F_{N:1}(t))
\ea
\ee
for each $\om\in\bR$ and each $t\in\bR$.

Next observe that
$$
[E_\om,A]E^*_\om=E_\om AE^*_\om-A=[E_\om A,E^*_\om]\,,
$$
since $E^*_\om=E_\om^{-1}$. Hence, using the second identity in Lemma \ref{L-ComMu} shows that
$$
\ba
\tfrac1N\Tr_{\fH}([E_\om,A]E^*_\om F_{N:1}(t))&=\tfrac1N\Tr_{\fH}([E_\om A,E^*_\om]F_{N:1}(t))
\\
&=\tfrac1N\Tr_{\fH_N}((\cM^{in}_N[E_\om A,E^*_\om])F_N(t))
\\
&=\Tr_{\fH_N}([\cM^{in}_N(E_\om A),\cM^{in}_NE^*_\om]F_N(t))\,.
\ea
$$

Substituting the expression for $\tfrac1N\Tr_{\fH}([E_\om,A]E^*_\om F_{N:1}(t))$ so obtained in the right hand side of (\ref{Ident1}), we see that
\be\lb{InteractIntegrand0}
\ba
\tfrac{N-1}N\Tr_{\fH_2}(([E_\om,A]\otimes E^*_\om)F_{N:2}(t))
\\
=\Tr_{\fH_N}\left(\left((\cM^{in}_N[E_\om,A])(\cM^{in}_NE^*_\om)-[\cM^{in}_N(E_\om A),\cM^{in}_NE^*_\om]\right)F_N(t)\right)
\\
=\Tr_{\fH_N}\left(\left((\cM^{in}_NE^*_\om)\cM^{in}_N(E_\om A)-\cM^{in}_N(AE_\om)(\cM^{in}_NE^*_\om)\right)F_N(t)\right)&\,.
\ea
\ee

\smallskip
\noindent
\textit{Step 4.} Since $F_N(t)=U_N(t)^*F_N^{in}U_N(t)$, one has
$$
\ba
\Tr_{\fH_N}\left(\left((\cM^{in}_NE^*_\om)\cM^{in}_N(E_\om A)-\cM^{in}_N(AE_\om)(\cM^{in}_NE^*_\om)\right)F_N(t)\right)
\\
=\Tr_{\fH_N}\left(U_N(t)\left((\cM^{in}_NE^*_\om)\cM^{in}_N(E_\om A)-\cM^{in}_N(AE_\om)(\cM^{in}_NE^*_\om)\right)U_N(t)^*F_N^{in}\right)
\\
=\Tr_{\fH_N}\left(\left((\cM_N(t)E^*_\om)\cM_N(t)(E_\om A)-\cM_N(t)(AE_\om)(\cM_N(t)E^*_\om)\right)F_N^{in}\right)
\ea
$$
by definition of $\cM_N(t)$ (see Definition \ref{D-EMQ}). Hence we deduce from \eqref{InteractIntegrand0} that
\be\lb{InteractIntegrand}
\ba
\frac{N-1}N\Tr_{\fH_2}(([E_\om,A]\otimes E^*_\om)F_{N:2}(t))
\\
=\Tr_{\fH_N}\left(\left((\cM_N(t)E^*_\om)\cM_N(t)(E_\om A)-\cM_N(t)(AE_\om)(\cM_N(t)E^*_\om)\right)F_N^{in}\right)
\ea
\ee

Arguing as in Step 2, observe that formula \eqref{InteractIntegrand} implies that, for each $A\in\cL(\fH)$, one has both
$$
(\cM_N(t),\cM_N(t)(\bu A))\hbox{ and }(\cM_N(t)(A\bu),\cM_N(t))\in\cE_N
$$
for all $t\in\bR$. Then, \eqref{Ident0} and \eqref{InteractIntegrand} imply that
$$
\ba
\frac{N-1}{N}\Tr_{\fH_2}([V_{12},A\otimes I]F_{N:2}(t))
\\
=\tfrac1{(2\pi)^d}\int_{\bR^d}\Tr_{\fH_N}\left((\cM_N(t)E^*_\om)(\cM_N(t)(E_\om A))F_N^{in}\right)\hat V(\dd\om)
\\
-\tfrac1{(2\pi)^d}\int_{\bR^d}\Tr_{\fH_N}\left((\cM_N(t)(AE_\om))(\cM_N(t)E^*_\om)F_N^{in}\right)\hat V(\dd\om)
\\
=\Tr_{\fH_N}((\cC[V,\cM_N(t),\cM_N(t)]A)F_N^{in})&\,,
\ea
$$
which is precisely the desired expression of the interaction term.
\end{proof}

\subsection{Writing an equation for $\cM_N(t)$}\lb{SS-EMQEq}
%%%%%%%%%%%%%%%%%%%%%%%%%%%%%%%%%%%%%%%%%%%%%%%%%%%%%%%%%%%%%%%%%%%%%%%%%%%%%%%%%%%%%%%%%%%%%%%%%%%%%%%%%%%%%%%%%%%%%%%%%

With \eqref{BBGKY1b} and Proposition \ref{P-InteractFla}, we see that $\cM_N(t)$ satisfies
$$
\ba
\Tr_{\fH_N}\left(\left(\left(i\hbar\d_t\cM_N(t)-\ad^*(-\tfrac12\hbar^2\Dlt)\cM_N(t)\right)A\right)F^{in}_N\right)
\\
=\Tr_{\fH_N}\left(\left(\cC[V,\cM_N(t),\cM_N(t)]A\right)F^{in}_N\right)&
\ea
$$
for all $A\in\cL(\fH)$ such that $[\Dlt,A]\in\cL(\fH)$, and all $F^{in}_N\in\cL_s(\fH_N)$. 

\smallskip
Our first main result in this paper is the following theorem.

\begin{Thm}\lb{T-EMQEq}
Let $V\in C_b(\bR^d)$ be an even, real-valued function whose Fourier transform $\hat V$ is a bounded measure. Let
$$
U_N(t)=e^{it\scrH_N/\hbar}\,,\quad\hbox{ with }\scrH_N:=\sum_{k=1}^N-\tfrac12\hbar^2\Dlt_{x_k}+\frac1N\sum_{1\le k<l\le N}V(x_k-x_l)\,,
$$
and let $\cM_N(t)$ be the element of $\cL(\cL(\fH),\cL_s(\fH_N))$ in Definition \ref{D-EMQ}. Then
\be\lb{EMQEq}
i\hbar\d_t\cM_N(t)=\ad^*(-\tfrac12\hbar^2\Dlt)\cM_N(t)-\cC[V,\cM_N(t),\cM_N(t)]\,.
\ee
\end{Thm}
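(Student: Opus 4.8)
The plan is to \emph{read off} equation \eqref{EMQEq} from the material already in place, the one genuinely difficult step having been dispatched in Proposition \ref{P-InteractFla} --- the closure of the first BBGKY equation at the level of $\cM_N(t)$, i.e.\ the rewriting of the two-particle interaction term $\tfrac{N-1}{N}\Tr_{\fH_2}([V_{12},A\otimes I]F_{N:2}(t))$, which a priori involves the second marginal and has no reason to close in $\cM_N$, in closed form through the twisted commutator $\cC$. Combining \eqref{BBGKY1b} with Proposition \ref{P-InteractFla} yields, for every $A\in\cL(\fH)$ with $[\Dlt,A]\in\cL(\fH)$, every $F^{in}_N\in\cL^1_s(\fH_N)$ and every $t\in\bR$, the identity
$$
\ba
i\hbar\d_t\Tr_{\fH_N}((\cM_N(t)A)F^{in}_N)=\Tr_{\fH_N}((\ad^*(-\tfrac12\hbar^2\Dlt)\cM_N(t)A)F^{in}_N)
\\
-\Tr_{\fH_N}((\cC[V,\cM_N(t),\cM_N(t)]A)F^{in}_N)\,,
\ea
$$
so that only three things remain: to check that $\cC[V,\cM_N(t),\cM_N(t)]$ is a legitimate element of $\cL(\cL(\fH),\cL_s(\fH_N))$, to strip off the test operator $F^{in}_N$, and to fix the sense of $\d_t\cM_N(t)$. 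The sole delicate point is the first, and it has essentially been taken care of already: the $\cE_N$-continuity condition needed to invoke Definition \ref{D-DefC} was verified in Step 4 of the proof of Proposition \ref{P-InteractFla}.

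For the first point: $\hat V$ is a bounded measure by hypothesis; $\cM_N(t)\in\cL(\cL(\fH),\cL_s(\fH_N))$ with $\|\cM_N(t)\|=\|\cM^{in}_N\|\le1$, conjugation by the unitary $U_N(t)$ being isometric; and, for every $A\in\cL(\fH)$, both $(\cM_N(t),\cM_N(t)(\bu A))$ and $(\cM_N(t)(A\bu),\cM_N(t))$ lie in $\cE_N$ by Step 4 of the proof of Proposition \ref{P-InteractFla} (the required ultraweak continuity in $\om$ being read off \eqref{InteractIntegrand}). Hence Definition \ref{D-DefC} applies and $\cC[V,\cM_N(t),\cM_N(t)]A\in\cL(\fH_N)$ with $\|\cC[V,\cM_N(t),\cM_N(t)]A\|\le2\|A\|\|\hat V\|_{TV}$. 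This operator is in fact symmetric: \eqref{InteractIntegrand} exhibits $\cC[V,\cM_N(t),\cM_N(t)]A$ as an ultraweak integral against $\hat V$ of operators of the form $U_N(t)X_\om U_N(t)^*$ with $X_\om\in\cL_s(\fH_N)$ and $\sup_\om\|X_\om\|\le2\|A\|$, and the symmetry class $\cL_s(\fH_N)$ --- preserved by conjugation with $U_N(t)$ because $\scrH_N$ commutes with every $U_\si$ (cf.\ \eqref{PropaSym}), and ultraweakly closed --- is stable under such an integral. Since $\cM_N(t)A$ and $\ad^*(-\tfrac12\hbar^2\Dlt)\cM_N(t)A=-\cM_N(t)[-\tfrac12\hbar^2\Dlt,A]$ lie in $\cL_s(\fH_N)$ directly by Definition \ref{D-EMQ}, all three terms of \eqref{EMQEq} are well defined in $\cL_s(\fH_N)$ when applied to any $A$ with $[\Dlt,A]\in\cL(\fH)$.

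Finally, the hypothesis $[\Dlt,A]\in\cL(\fH)$ makes $[\scrH_N,J_1A]$ bounded, so $t\mapsto U_N(t)(J_1A)U_N(t)^*$ is of class $C^1$ in operator norm and $t\mapsto\Tr_{\fH_N}((\cM_N(t)A)F^{in}_N)$ is genuinely differentiable; the displayed identity then says that, for each $t$, the operator
$$
i\hbar\d_t\cM_N(t)A-\ad^*(-\tfrac12\hbar^2\Dlt)\cM_N(t)A+\cC[V,\cM_N(t),\cM_N(t)]A\,\in\,\cL_s(\fH_N)
$$
pairs to zero against every $F^{in}_N\in\cL^1_s(\fH_N)$. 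But for any symmetric $X\in\cL_s(\fH_N)$ and any $G\in\cL^1(\fH_N)$ one has $\Tr_{\fH_N}(XG)=\Tr_{\fH_N}(X\,\tfrac1{N!}\sum_{\si\in\fS_N}U_\si GU^*_\si)$, with the right-hand argument in $\cL^1_s(\fH_N)$; hence that operator pairs to zero against every $G\in\cL^1(\fH_N)$ and therefore vanishes, by nondegeneracy of the trace pairing of $\cL(\fH_N)$ with $\cL^1(\fH_N)$. This is precisely \eqref{EMQEq}, holding in $\cL_s(\fH_N)$ for every $A$ with $[\Dlt,A]\in\cL(\fH)$, the time-derivative being read in the ultraweak sense --- which is anyway forced here, since $\ad^*(-\tfrac12\hbar^2\Dlt)\cM_N(t)$ is defined by \eqref{NotLR(D)} only on that same class of operators $A$. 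As anticipated, the one genuine obstacle lies upstream, in Proposition \ref{P-InteractFla}; the argument above is only the functional-analytic packaging.
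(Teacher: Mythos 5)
Your proof is correct, and its skeleton is the same as the paper's: combine \eqref{BBGKY1b} with Proposition \ref{P-InteractFla} to obtain the weak identity, then strip the test operator $F^{in}_N$. Where you diverge is in the stripping step, and your route is genuinely different and, I would say, cleaner. The paper first proves an auxiliary lemma decomposing any $G_N\in\cL^1_s(\fH_N)$ into four elements of $\cD_s(\fH_N)$, then tests against the particular trace-class operator $G_N=S^*_N|\Psi_N\ra\la\Psi_N|$ with $\Psi_N$ symmetric to conclude $S^*_N\Psi_N=0$ on the symmetric subspace, and finally tries to extend to all of $\fH_N$ via the claim $S^*_N\Phi_N=S^*_N\tilde\Phi_N$. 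That last claim is not justified as stated: $S_N\in\cL_s(\fH_N)$ gives $S^*_N U_\tau=U_\tau S^*_N$, hence $S^*_N\tilde\Phi_N=\widetilde{S^*_N\Phi_N}$, which need not equal $S^*_N\Phi_N$. Your averaging identity $\Tr_{\fH_N}(XG)=\Tr_{\fH_N}\bigl(X\,\tfrac1{N!}\sum_{\si}U_\si GU^*_\si\bigr)$ for $X\in\cL_s(\fH_N)$ upgrades ``vanishing pairing against $\cL^1_s$'' directly to ``vanishing pairing against all of $\cL^1$'', so it sidesteps the decomposition lemma entirely and closes this gap in one line. (An equivalent fix of the paper's route is to test against $G_N=\tfrac1{N!}\sum_\si U_\si S^*_N|\Phi_N\ra\la\Phi_N|U^*_\si$ for arbitrary $\Phi_N$, which yields $\|S^*_N\Phi_N\|^2=0$ directly.) You also make explicit several points the paper elides --- membership of $\cC[V,\cM_N(t),\cM_N(t)]$ in $\cL(\cL(\fH),\cL_s(\fH_N))$, stability of $\cL_s(\fH_N)$ under the ultraweak integral, and the domain restriction forced by $\ad^*$ --- all of which are correct. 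One small overstatement: $t\mapsto U_N(t)(J_1A)U_N(t)^*$ is not $C^1$ in operator norm (that would require norm continuity of $s\mapsto U_N(s)CU_N(s)^*$, which fails for unbounded $\scrH_N$); it is strongly $C^1$, which is all you use since only the scalar map $t\mapsto\Tr_{\fH_N}((\cM_N(t)A)F^{in}_N)$ needs to be differentiable, consistent with the ultraweak reading of $\d_t$ you yourself adopt at the end.
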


\smallskip
\noindent
\textbf{Remark.} Unfortunately the terms
$$
\ad^*(-\tfrac12\hbar^2\Dlt)\cM_N(t)\quad\hbox{ and }\quad\cC[V,\cM_N(t),\cM_N(t)]
$$
on the right hand side of \eqref{EMQEq} seem to be of a very different nature. This is in sharp contrast with the Hartree equation, where the kinetic energy $-\tfrac12\hbar^2\Dlt$ and the interaction potential energy $V_{R(t)}$ defined in 
\eqref{MFPot} appear together on an equal footing in right hand side of \eqref{Hartree}. However, both terms appearing on the right hand side of \eqref{EMQEq} can be formally assembled in a single expression which is reminiscent of 
the commutator appearing on the right hand side of \eqref{Hartree}. See Appendix \ref{A-Hochschild} for a complete discussion of this point.

\begin{proof}
Setting
$$
S_N:=i\hbar\d_t(\cM_N(t)A)-\left(\ad^*(-\tfrac12\hbar^2\Dlt)\cM_N(t)\right)A+\cC[V,\cM_N(t),\cM_N(t)]A\,,
$$
one has 
$$
S_N\in\cL_s(\fH_N)\quad\hbox{ and }\Tr_{\fH_N}(S_NF^{in}_N)=0
$$
for all $F^{in}_N\in\cD_s(\fH_N)$. 

\begin{Lem}
Any $G_N\in\cL^1_s(\fH_N)$ can be decomposed as
\be\lb{Decomp4}
G_N=\sum_{k=1}^4\l_kG_N^k\,,\quad\hbox{ with }G_N^k\in\cD_s(\fH_N)\hbox{ and }\l_k\in\bC\,.
\ee
\end{Lem}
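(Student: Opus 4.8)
The plan is to reduce to the self-adjoint case and then invoke the Jordan (positive/negative part) decomposition, checking at each stage that invariance under $\fS_N$ is preserved. First I would write $G_N=H_N+iK_N$ with $H_N:=\tfrac12(G_N+G_N^*)$ and $K_N:=\tfrac1{2i}(G_N-G_N^*)$; both are self-adjoint and trace-class, and both lie in $\cL^1_s(\fH_N)$, since $U_\si$ is unitary one has $U_\si G_N^*U_\si^*=(U_\si G_NU_\si^*)^*=G_N^*$, so conjugation by $U_\si$ fixes $H_N$ and $K_N$ as well as $G_N$.

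Next, for a self-adjoint $H\in\cL^1_s(\fH_N)$ I would use the Borel functional calculus to set $H_+:=\max(H,0)$ and $H_-:=\max(-H,0)=H_+-H$, so that $H=H_+-H_-$ with $H_\pm\ge 0$, and $H_\pm\in\cL^1(\fH_N)$ because $0\le H_\pm\le|H|=H_++H_-$ and $|H|$ is trace-class. Writing $H_\pm=f_\pm(H)$ with the Borel functions $f_+(\l)=\l^+$, $f_-(\l)=\l^-$, and using $U_\si HU_\si^*=H$, we get $U_\si H_\pm U_\si^*=f_\pm(U_\si HU_\si^*)=H_\pm$, so in fact $H_\pm\in\cL^1_s(\fH_N)$.

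Finally I would normalize. If $\Tr_{\fH_N}(H_+)>0$, put $G:=H_+/\Tr_{\fH_N}(H_+)\in\cD_s(\fH_N)$ with associated scalar $\Tr_{\fH_N}(H_+)$; if $\Tr_{\fH_N}(H_+)=0$ then $H_+=0$ (a nonnegative operator with vanishing trace), and we take the associated scalar to be $0$ together with any fixed element of $\cD_s(\fH_N)$, which is nonempty since $\rho^{\otimes N}\in\cD_s(\fH_N)$ for any $\rho\in\cD(\fH)$. Applying this to $H_+,H_-$ of $H_N$ and again to $K_N$ produces $G_N=\l_1G_N^1+\l_2G_N^2+i\l_3G_N^3+i\l_4G_N^4$ with $G_N^k\in\cD_s(\fH_N)$ and $\l_k\ge 0$, which is \eqref{Decomp4} after absorbing the factors $i$ into complex coefficients. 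There is no genuine obstacle here; the only points that require a little care are the stability of $\fS_N$-invariance under the adjoint, under passing to real and imaginary parts, and under the functional calculus defining $H_\pm$, together with the harmless degenerate case in which one of the four parts vanishes.
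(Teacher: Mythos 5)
Your proof is correct and follows essentially the same route as the paper: split $G_N$ into its Hermitian real and imaginary parts, then split each self-adjoint part into its positive and negative parts (the paper does this via the eigenprojection expansion of a self-adjoint trace-class operator, you via the Borel functional calculus with $f_\pm(\l)=\l^\pm$, which amounts to the same thing), check $\fS_N$-invariance is preserved at each stage, and normalize by the trace. You also treat the degenerate case where a positive or negative part has zero trace, a point the paper leaves implicit.
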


Therefore
$$
\Tr_{\fH_N}(S_NG_N)=0\quad\hbox{ for all }G_N\in\cL^1_s(\fH_N)\,.
$$
Pick $G_N$ of the form
$$
G_N:=S^*_N|\Psi_N\ra\la\Psi_N|\,,
$$
where
$$
\Psi_N\in\fH_N\,,\quad\hbox{ such that }\|\Psi_N\|_{\fH_N}=1\,,\quad \hbox{ and }U_\si\Psi_N=\Psi_N\hbox{ for all }\si\in\fS_N\,.
$$
Thus
$$
0=\Tr_{\fH_N}(S_NG_N)=\la\Psi_N|S_NS^*_N|\Psi_N\ra=\|S^*_N\Psi_N\|^2_{\fH_N}\,,
$$
i.e.
$$
S^*_N\Psi_N=0\quad\hbox{ for all }\Psi_N\in\fH_N\hbox{ such that }U_\si\Psi_N=\Psi_N\hbox{ for all }\si\in\fS_N\,.
$$
For each $\Phi_N\in\fH_N$, one has
$$
S^*_N\Phi_N=S^*_N\tilde\Phi_N\,,\qquad\hbox{ with }\tilde\Phi_N=\frac1{N!}\sum_{\tau\in\fS_N}U_\tau\Phi_N\,.
$$
By construction, $U_\si\tilde\Phi_N=\tilde\Phi_N$ for all $\si\in\fS_N$, so that $S^*_N\Phi_N=0$ for all $\Phi_N\in\fH_N$. Hence $S^*_N=0$, or equivalently $S_N=0$. Since this holds for all $A\in\cL(\fH)$ such that $[\Dlt,A]\in\fH$, 
we conclude that \eqref{EMQEq} holds.
\end{proof}

\begin{proof}[Proof of the lemma]
Indeed, write 
$$
G_N=\Re(G_N)+i\Im(G_N)\,,\quad\hbox{ with }\left\{\ba\Re(G_N):=&\tfrac12(G_N+G_N^*)\,,\\ \Im(G_N):=&\tfrac1{2i}(G_N-G_N^*)\,.\ea\right.
$$
One has
$$
\Re(G_N)=\Re(G_N)^*\hbox{ and }\Im(G_N)=\Im(G_N)^*\in\cL^1_s(\fH_N)\,,
$$
so that $\Re(G_N)$ and $\Im(G_N)$ have spectral decompositions of the form
$$
\Re(G_N)=\sum_{n\ge 0}\a_n\bp_n\,,\quad\Im(G_N)=\sum_{n\ge 0}\b_n\bq_n\,,
$$
where 
$$
\left\{\ba\bp_n^2&=\bp_n=\bp_n^*=U_\si\bp_nU_\si^*\,,\\ \bq_n^2&=\bq_n=\bq_n^*=U_\si\bq_nU_\si^*\,,\ea\right.\quad\hbox{ and }m\not=n\Rightarrow\left\{\ba\Tr_{\fH_N}(\bp_n\bp_m)=0\,,\\ \Tr_{\fH_N}(\bq_n\bq_m)=0\,,\ea\right.
$$
for all $m,n\ge 0$ and $\si\in\fS_N$, and
$$
\sum_{n\ge 0}|\a_n|\Tr_{\fH_N}(\bp_n)<\infty\,,\quad\sum_{n\ge 0}|\b_n|\Tr_{\fH_N}(\bq_n)<\infty\,.
$$
The decomposition \eqref{Decomp4} is obtained with
$$
\ba
\l_1G_N^1&=\sum_{n\ge 0}\a^+_n\bp_n\,,&&\quad\l_2G_N^2=-\sum_{n\ge 0}\a^-_n\bp_n\,,
\\
\l_3G_N^3&=\sum_{n\ge 0}i\b^+_n\bq_n\,,&&\quad\l_4G_N^4=-\sum_{n\ge 0}i\b^-_n\bq_n\,,
\ea
$$
setting
$$
\l_k=\Tr_{\fH_N}(\l_kG^k_n)\,,\quad k=1,\ldots,4\,.
$$
\end{proof}

\subsection{Hartree's equation is a special case of equation \eqref{EMQEq}}
%%%%%%%%%%%%%%%%%%%%%%%%%%%%%%%%%%%%%%%%%%%%%%%%%%%%%%%%%%%%%%%%%%%%%%%%%%%%%%%%%%%%%%%%%%%%%%%%%%%%%%%%%%%%%%%%%%%%%%%%%

In order to complete the analogy between Proposition \ref{P-WSolVlasov} and Theorem \ref{T-EMQEq}, we now explain the connection between Hartree's equation (the quantum analogue of Vlasov's equation (\ref{Vlasov})) and the evolution 
equation \eqref{EMQEq} satisfied by $t\mapsto\cM_N(t)$.

In the classical case, $t\mapsto\mu_{T^N_tZ_N}(\dd x\dd\xi):=m(t,\dd x\dd\xi;Z_N)$ is a measured-valued function of $t$, parametrized by $Z_N$. This measured-valued function of time $m$ is a solution of the Vlasov equation \eqref{Vlasov} 
for all $Z_N\in(\bR^d\times\bR^d)^N$: the partial derivatives in (\ref{Vlasov}) act on the variables $t,x,\xi$ in $m$, and not on $Z_N$. Any classical solution $f\equiv f(t,x,\xi)$ of the Vlasov equation can be viewed as special case of $m$ of the 
form $m(t,\dd x\dd\xi;Z_N)=f(t,x,\xi)\dd x\dd\xi$; most importantly, such an $m$ is constant in $Z_N$. 

The quantum analogue of such an $m$ is a time-dependent element $t\mapsto\scrR(t)$ of $\cL(\cL(\fH),\cL_s(\fH_N))$ of the form
\be\lb{HartreeEM}
\scrR(t):\,A\mapsto\Tr_{\fH}(R(t)A)I_{\fH_N}\,,
\ee
assuming that $R(t)\in\cD(\fH)$ for all $t\in\bR$.

\begin{Thm}\lb{T-Hartree}
The time-dependent element $t\mapsto\scrR(t)$ of $\cL(\cL(\fH),\cL_s(\fH_N))$ defined by (\ref{HartreeEM}) is a solution to the evolution equation (\ref{EMQEq}) if and only if $t\mapsto R(t)$ is a solution of the Hartree equation 
\be\lb{HartreeEq}
i\hbar\d_tR(t)=[-\tfrac12\hbar^2\Dlt+V_{R(t)},R(t)]\,.
\ee
Here $V_{R(t)}\in\cL(\fH)$ designates the operator defined by
$$
(V_{R(t)}\phi)(x)=\phi(x)\int_{\bR^d}V(x-z)r(t,z,z)\dd z\,,
$$
denoting by $r(t,x,y)$ the integral kernel of the trace-class operator $R(t)$.
\end{Thm}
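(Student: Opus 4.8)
The plan is to substitute $\cM_N(t)=\scrR(t)$ into \eqref{EMQEq}, evaluate both sides on an arbitrary $A\in\cL(\fH)$ with $[\Dlt,A]\in\cL(\fH)$, and check that the resulting identity between elements of $\cL_s(\fH_N)$ — all of which turn out to be scalar multiples of $I_{\fH_N}$ — is exactly the weak form of \eqref{HartreeEq}. Two of the three terms are immediate: since $\scrR(t)B=\Tr_\fH(R(t)B)I_{\fH_N}$ for every $B\in\cL(\fH)$, the left-hand side is $i\hbar\partial_t(\scrR(t)A)=i\hbar\Tr_\fH(\dot R(t)A)I_{\fH_N}$, and the kinetic term is $\big(\ad^*(-\tfrac12\hbar^2\Dlt)\scrR(t)\big)A=-\scrR(t)[-\tfrac12\hbar^2\Dlt,A]=-\Tr_\fH\big(R(t)[-\tfrac12\hbar^2\Dlt,A]\big)I_{\fH_N}$.

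The heart of the argument is to identify $\cC[V,\scrR(t),\scrR(t)]A$ with the Hartree interaction. Because $\scrR(t)$ maps every operator to a scalar multiple of $I_{\fH_N}$, the two products under the integral in \eqref{DefC} collapse to products of scalars times $I_{\fH_N}$, and $\Tr_\fH(R(t)E^*_\om)$ factors out of both, giving
$$\cC[V,\scrR(t),\scrR(t)]A=\left(\tfrac1{(2\pi)^d}\int_{\bR^d}\Tr_\fH(R(t)E^*_\om)\,\Tr_\fH\big(R(t)[E_\om,A]\big)\,\hat V(\dd\om)\right)I_{\fH_N},$$
with the normalization that appears in Steps 2–4 of the proof of Proposition \ref{P-InteractFla}. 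I would then observe that $\Tr_\fH(R(t)E^*_\om)=\int_{\bR^d}r(t,x,x)e^{-i\om\cdot x}\dd x$ is the Fourier transform of the density $\rho_t(x):=r(t,x,x)$, so that Fourier inversion represents the multiplication operator $V_{R(t)}$ (whose symbol is $V\star\rho_t$) as $V_{R(t)}=\tfrac1{(2\pi)^d}\int_{\bR^d}\hat V(\om)\Tr_\fH(R(t)E^*_\om)E_\om\,\dd\om$; this integral converges absolutely since $\rho_t\in L^1$ makes $\om\mapsto\Tr_\fH(R(t)E^*_\om)$ bounded while $\hat V$ is a bounded measure. Substituting this representation, the bracket above becomes $\Tr_\fH\big(R(t)[V_{R(t)},A]\big)$, i.e. $\cC[V,\scrR(t),\scrR(t)]A=\Tr_\fH\big(R(t)[V_{R(t)},A]\big)I_{\fH_N}$.

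Assembling the three pieces, \eqref{EMQEq} for $\scrR(t)$ reads $i\hbar\Tr_\fH(\dot R(t)A)=-\Tr_\fH\big(R(t)[-\tfrac12\hbar^2\Dlt+V_{R(t)},A]\big)$ for all admissible $A$, and by cyclicity of the trace this is $\Tr_\fH\big(\big(i\hbar\dot R(t)-[-\tfrac12\hbar^2\Dlt+V_{R(t)},R(t)]\big)A\big)=0$. The forward implication is then immediate. For the converse, one recovers $i\hbar\dot R(t)=[-\tfrac12\hbar^2\Dlt+V_{R(t)},R(t)]$ by testing against rank-one operators $A=|\psi\ra\la\phi|$ with $\psi,\phi$ Schwartz (for which $[\Dlt,A]$ is bounded), which forces the operator in parentheses to vanish as a form on Schwartz functions, hence identically; alternatively one reruns the symmetrization/spectral-decomposition argument from the end of the proof of Theorem \ref{T-EMQEq}.

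I expect no genuine analytic obstacle here: the content is the purely algebraic collapse of $\cC[V,\scrR(t),\scrR(t)]$ onto the Hartree commutator. The only points requiring care are bookkeeping — keeping the $(2\pi)^{-d}$ factor and the Fourier sign conventions consistent so that the bracket matches $V_{R(t)}$ on the nose — and checking that every trace and the ultraweak integral are well defined, which reduces to $R(t)$ being trace-class with $L^1$ diagonal and $\hat V$ a finite measure.
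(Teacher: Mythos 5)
Your proof is correct and follows the paper's route: the paper isolates the key computation as Lemma \ref{L-LRcP} (the identity $\cC[V,\scrR,\L^*]=-\ad^*(V_R)\L^*$, valid for any $\L^*$ that is the adjoint of a continuous map $\cL^1(\fH_N)\to\cL^1(\fH)$), and then specializes $\L^*=\scrR(t)$, whereas you inline that Fourier-decomposition-and-reassembly argument directly in the special case $\L_1=\L_2=\scrR(t)$, using that both outputs collapse to scalars times $I_{\fH_N}$. The rest — assembling the kinetic and interaction terms via trace cyclicity into the weak Hartree equation and arguing the converse by testing with enough $A$ — is the same as the paper's, with your converse made a bit more explicit than the paper's brief "Thus...". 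No gaps.
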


\smallskip
The key observation in the proof of Theorem \ref{T-Hartree} is summarized in the next lemma.

\begin{Lem}\lb{L-LRcP}
Let $R\in\cD(\fH)$ and define $\scrR\in\cL(\cL(\fH),\cL_s(\fH_N))$ by the formula
$$
\scrR A:=\Tr_\fH(RA)I_{\fH_N}\,,\qquad\hbox{ for all }A\in\cL(\fH)\,.
$$
Then, for each $\L\in\cL(\cL^1(\fH_N),\cL^1(\fH))$ and each $A\in\cL(\fH)$, one has
$$
(\scrR,\L^*(\bu A))\hbox{ and }(\L^*(A\bu),\scrR)\in\cE_N
$$
(where $\L^*\in\cL(\cL(\fH),\cL(\fH_N))$ is the adjoint of $\L$), and
$$
\cC[V,\scrR,\L^*]=-\ad^*(V_R)\L^*
$$
for all even, real-valued $V\in C_b(\bR^d)$ whose Fourier transform $\hat V$ is a bounded measure.
\end{Lem}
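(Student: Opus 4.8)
The plan is to unwind both sides of the claimed identity $\cC[V,\scrR,\L^*]=-\ad^*(V_R)\L^*$ by evaluating them on an arbitrary $A\in\cL(\fH)$ and then computing with the defining formula \eqref{DefC}. First I would verify the membership claims $(\scrR,\L^*(\bu A))\in\cE_N$ and $(\L^*(A\bu),\scrR)\in\cE_N$: since $\scrR E^*_\om=\Tr_\fH(RE^*_\om)I_{\fH_N}=\widehat{r}(\om)I_{\fH_N}$ where $r(t,\cdot,\cdot)$... more precisely $\Tr_\fH(RE^*_\om)=\int r(x,x)e^{-i\om\cdot x}\dd x$ is the Fourier transform of the (integrable, since $R$ is trace class) diagonal density $x\mapsto r(x,x)$, hence a bounded continuous function of $\om$; and $\L^*(E_\om A)$, $\L^*(AE_\om)$ depend ultraweakly continuously on $\om$ because $E_\om$ does and $\L^*$ is bounded. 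Thus the integrand $\om\mapsto\Tr_{\fH_N}((\scrR E^*_\om)(\L^*(E_\om A))F)$ is continuous and the pair lies in $\cE_N$; symmetrically for the other pair.

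Next I would substitute $\L_1=\scrR$ and $\L_2=\L^*$ into \eqref{DefC}. Because $\scrR E^*_\om=\widehat{r_{\mathrm{diag}}}(\om)I_{\fH_N}$ is a scalar multiple of the identity, it commutes with everything and factors out of each product, so
$$
\cC[V,\scrR,\L^*]A=\int_{\bR^d}\widehat{r_{\mathrm{diag}}}(\om)\left(\L^*(E_\om A)-\L^*(AE_\om)\right)\hat V(\dd\om)
=\L^*\!\left(\int_{\bR^d}\widehat{r_{\mathrm{diag}}}(\om)[E_\om,A]\,\hat V(\dd\om)\right),
$$
where pulling $\L^*$ outside the ultraweak integral is justified by its continuity for the ultraweak topology (or, concretely, by testing against $F\in\cL^1(\fH_N)$ and using $\Tr((\L^* X)F)=\Tr(X(\L F))$). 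It then remains to identify the inner operator. Writing $V_R(x)=\int V(x-z)r(z,z)\dd z$ and taking its Fourier transform gives $\widehat{V_R}(\om)=\hat V(\om)\widehat{r_{\mathrm{diag}}}(\om)$ (as measures, $\hat V$ being a bounded measure and $r_{\mathrm{diag}}\in L^1$), and from the Weyl-type identity $V_R=\tfrac1{(2\pi)^d}\int E_\om \widehat{V_R}(\dd\om)$ in the ultraweak sense — established exactly as in Step 2 of the proof of Proposition \ref{P-InteractFla} — one gets $[E_\om,A]$ integrated against $\widehat{V_R}$ equals $[V_R,A]$ up to the $(2\pi)^{-d}$ normalization. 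Care must be taken with the normalization constant: here $\cC$ is defined with $\hat S(\dd\om)$ and no $(2\pi)^{-d}$, whereas the Weyl identity for $V_{12}$ carried that factor; I would track this by noting that $V$ in the statement of Lemma \ref{L-LRcP} plays the role of $S$ in \eqref{DefC}, so the normalization is absorbed into how $\hat V$ is understood, consistently with \eqref{DefC}. The upshot is $\cC[V,\scrR,\L^*]A=-\L^*[V_R,A]=(\ad^*(V_R)\L^*)A$ with the appropriate sign, which is the claim.

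The main obstacle I anticipate is not any single hard estimate but the bookkeeping of the two ultraweak-integral manipulations: (i) justifying that $\L^*$ commutes with the ultraweak integral, which requires knowing $\L^*$ is the adjoint of a \emph{bounded} map $\cL^1(\fH_N)\to\cL^1(\fH)$ so that $\Tr((\L^*X)F)=\Tr(X(\L F))$ with $\L F\in\cL^1(\fH)$, and (ii) the convolution identity $\widehat{V_R}=\hat V\cdot\widehat{r_{\mathrm{diag}}}$ at the level of measures, together with re-deriving $V_R=\tfrac1{(2\pi)^d}\int E_\om\,\widehat{V_R}(\dd\om)$ in the ultraweak sense — this is a verbatim repeat of the rank-one computation \eqref{TrVrk1} in Step 2 of Proposition \ref{P-InteractFla}, now with $V$ replaced by $V_R$ and applied on the single-particle space, and the boundedness of $V_R$ (which follows from $\hat V$ being a bounded measure and $r_{\mathrm{diag}}\in L^1$, hence $\|V_R\|_{L^\infty}\le\|\hat V\|_{TV}\|r_{\mathrm{diag}}\|_{L^1}/(2\pi)^d$) is what makes $\ad^*(V_R)$ well defined on all of $\cL(\fH)$.
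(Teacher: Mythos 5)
Your argument follows the same route as the paper's proof: observe that $\scrR E^*_\om=\Tr_\fH(RE^*_\om)\,I_{\fH_N}$ is a scalar multiple of the identity, so it factors out of both products in \eqref{DefC}; pull $\L^*$ outside the ultraweak integral (justified because $\L^*$ is the adjoint of the bounded map $\L:\cL^1(\fH_N)\to\cL^1(\fH)$); and identify $\tfrac1{(2\pi)^d}\int\hat V(\om)\Tr(RE^*_\om)E_\om\,\dd\om$ with $V_R$ by Fourier inversion, using $\Tr(RE^*_\om)=\hat\rho(\om)$ with $\rho(x)=r(x,x)\in L^1$. The continuity check for $\cE_N$-membership is likewise the same in substance as the paper's, which simply writes out $\Tr_{\fH_N}((\scrR E^*_\om)(\L^*(E_\om A))G)=\Tr_\fH(RE^*_\om)\Tr_\fH(E_\om A(\L G))$ and notes this is continuous in $\om$.

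One small slip: your displayed computation correctly yields $\cC[V,\scrR,\L^*]A=\L^*\bigl[V_R,A\bigr]$ (no minus sign), which by the convention $(\ad^*(D)\L)A=-\L[D,A]$ in \eqref{NotLR(D)} gives $\cC[V,\scrR,\L^*]=-\ad^*(V_R)\L^*$, i.e.\ the claim. But your final summary line reads $\cC[V,\scrR,\L^*]A=-\L^*[V_R,A]=(\ad^*(V_R)\L^*)A$, which inserts a spurious minus sign, contradicts your own display, and would give the opposite sign from the lemma. Drop that extra minus and the conclusion matches. Your remark about the $(2\pi)^{-d}$ normalization is well taken: \eqref{DefC} as printed omits the factor, but the paper consistently uses it in the subsequent computations (including the proof of this lemma), so your resolution is the right one.
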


\begin{proof}[Proof of the lemma]
For all $G\in\cL^1(\fH_N)$, one has
$$
\ba
\Tr_{\fH_N}((\scrR E^*_\om)(\L^*(E_\om A))G)=&\Tr_{\fH}(RE^*_\om)\Tr_{\fH_N}((\L^*(E_\om A))G)
\\
=&\Tr_{\fH}(RE^*_\om)\Tr_{\fH}(E_\om A(\L G))
\ea
$$
and likewise
$$
\Tr_{\fH_N}((\L^*(AE_\om))(\scrR E^*_\om)G)=\Tr_{\fH}(RE^*_\om)\Tr_{\fH}(E_\om(\L G)A)\,.
$$
Hence both functions
$$
\om\mapsto\Tr_{\fH_N}((\scrR E^*_\om)(\L^*(E_\om A))G)\hbox{ and }\om\mapsto\Tr_{\fH_N}((\L^*(AE_\om))(\scrR E^*_\om)G)
$$
are continuous on $\bR$, so that
$$
(\scrR,\L^*(\bu A))\hbox{ and }(\L^*(A\bu),\scrR)\in\cE_N\,.
$$

Since $V$ is even, the bounded measure $\hat V$ is invariant under the transformation $\om\mapsto-\om$, and therefore
$$
\ba
\cC[V,\scrR,\L^*]A
\\
=\tfrac1{(2\pi)^d}\int_{\bR^d}\left(\Tr(RE^*_\om)\L^*(E_\om A)-\L^*(AE_\om)\Tr(RE^*_\om)\right)\hat V(\dd\om)
\\
=\tfrac1{(2\pi)^d}\int_{\bR^d}\hat V(\om)\Tr(RE^*_\om)\L^*([E_\om,A])\dd\om
\\
=\L^*\left[\tfrac1{(2\pi)^d}\int_{\bR^d}\hat V(\om)\Tr(RE^*_\om)E_\om\dd\om,A\right]&\,.
\ea
$$
Since
$$
\Tr(RE^*_\om)=\hat\rho(\om)\,,\quad\hbox{ where }\rho(x):=r(x,x)
$$
(where $r\equiv r(x,y)$ is the integral kernel of the trace-class operator $R$), one has
$$
\tfrac1{(2\pi)^d}\int_{\bR^d}\hat V(\om)\Tr(RE^*_\om)E_\om\dd\om=\tfrac1{(2\pi)^d}\int_{\bR^d}\hat V(\om)\hat\rho(\om)E_\om\dd\om=V\star\rho=V_R\,.
$$
Therefore
$$
\cC[V,\scrR,\L^*]A=\L^*\left[V_R,A\right]=-(\ad^*(V_R)\L^*)A\,.
$$
\end{proof}

\begin{proof}[Proof of the theorem] First observe that $\scrR(t)$ is the adjoint of linear map
$$
\cL^1(\fH_N)\ni G\mapsto(\Tr_{\fH_N}(G))R(t)\in\cL^1(\fH)\,,
$$
which is obviously continuous, since
$$
\|(\Tr_{\fH_N}(G))R(t)\|_{\cL^1(\fH)}\le\|G\|_{\cL^1}\|R(t)\|_{\cL^1(\fH)}=\|G\|_{\cL^1}\,.
$$
By Lemma \ref{L-LRcP}
$$
\cC[V,\scrR(t),\scrR(t)]=-\ad^*(V_{R(t)})\scrR(t)
$$
so that, for each $A\in\cL(\fH)$
$$
\ba
\cC[V,\scrR(t),\scrR(t)]A=(-\ad^*(V_{R(t)})\scrR(t))A=\scrR(t)[V_{R(t)},A]
\\
=-\Tr_{\fH}(R(t)[V_{R(t)},A])I_{\fH_N}=\Tr_{\fH}([V_{R(t)},R(t)]A)I_{\fH_N}
\ea
$$

On the other hand, for each $A\in\cL(\fH)$ such that $[\Dlt,A]\in\cL(\fH)$, one has
$$
\ba
(\ad^*(-\tfrac12\hbar^2\Dlt)\scrR(t))A=&-\scrR(t)[-\tfrac12\hbar^2\Dlt,A]
\\
=&-\Tr_\fH(R(t)[-\tfrac12\hbar^2\Dlt,A])I_{\fH_N}
\\
=&\Tr_\fH([-\tfrac12\hbar^2\Dlt,R(t)]A)I_{\fH_N}\,.
\ea
$$

In other words
$$
\ba
\left(i\hbar\d_t\scrR(t)-\ad^*(-\tfrac12\hbar^2\Dlt)\scrR(t)+\cC[V,\scrR(t),\scrR(t)]\right)A
\\
=\Tr_\fH\left((i\hbar\d_tR(t)-[-\tfrac12\hbar^2\Dlt+V_{R(t)},R(t)])A\right)I_{\fH_N}
\ea
$$
for each $A\in\cL(\fH)$ such that $[\Dlt,A]\in\cL(\fH)$.

Thus, $\scrR(t)$ is a solution to the evolution equation \eqref{EMQEq} if and only if $R(t)$ is a solution to the Hartree equation \eqref{HartreeEq}.
\end{proof}

%%%%%%%%%%%%%%%%%%%%%%%%%%%%%%%%%%%%%%%%%%%%%%%%%%%%%%%%%%%%%%%%%%%%%%%%%%%%%%%%%%%%%%%%%%%%%%%%%%%%%%%%%%%%%%%%%%%%%%%%%

\section{Uniformity in $\hbar$ of the Mean-Field Limit:\\ Proof of Theorem \ref{T-MFUnif}}\lb{S-EMMFh}

%%%%%%%%%%%%%%%%%%%%%%%%%%%%%%%%%%%%%%%%%%%%%%%%%%%%%%%%%%%%%%%%%%%%%%%%%%%%%%%%%%%%%%%%%%%%%%%%%%%%%%%%%%%%%%%%%%%%%%%%%

The proof of Theorem \ref{T-MFUnif} is quite involved, and will be split in several steps.

\smallskip
\noindent
\textit{Step 1.}  Set $\scrR(t)A:=\Tr_{\fH}(R(t)A)I_{\fH_N}$ for all $A\in\cL(\fH)$. By Theorems \ref{T-EMQEq} and \ref{T-Hartree}, one has
$$
\ba
i\hbar\d_t(\cM_N(t)-\scrR(t))=&\ad^*\left(-\tfrac12\hbar^2\Dlt\right)(\cM_N(t)-\scrR(t))
\\
&+\cC[V,\scrR(t),\scrR(t)]-\cC[V,\cM_N(t),\cM_N(t)]\,.
\ea
$$
Formula \eqref{DefC} shows that $(\L_1,\L_2)\mapsto\cC[V,\L_1,\L_2]$ is $\bC$-bilinear on its domain of definition. More precisely, let $\L_1,\L_2,\L'_1,\L'_2\in\cL(\cL(\fH),\cL(\fH_N)$ satisfy
$$
(\L_1,\L_2(\bu A))\,,\quad(\L'_1,\L'_2(\bu A))\,,\quad(\L_2(A\bu),\L_1)\hbox{ and }(\L'_2(A\bu),\L'_1)\in\cE_N
$$
for all $A\in\cL(\fH)$. Then, for all $\l_1,\l_2,\l'_1,\l'_2\in\bC$, one has
$$
((\l_1\L_1+\l'_1\L'_1),(\l_2\L_2+\l'_2\L'_2)(\bu A))\hbox{ and }((\l_2\L_2+\l'_2\L'_2)(A\bu ),(\l_1\L_1+\l'_1\L'_1))\in\cE_N
$$
and
$$
\ba
\cC[V,\l_1\L_1+\l'_1\L'_1,\l_2\L_2+\l'_2\L'_2]=&\l_1\l_2\cC[V,\L_1,\L_2]+\l'_1\l_2\cC[V,\L'_1,\L_2]
\\
&+\l_1\l'_2\cC[V,\L_1,\L'_2]+\l'_1\l'_2\cC[V,\L'_1,\L'_2]\,.
\ea
$$
Hence
$$
\ba
\cC[V,\scrR(t),\scrR(t)]-\cC[V,\cM_N(t),\cM_N(t)]=&\,\,\cC[V,\scrR(t),\scrR(t)-\cM_N(t)]
\\
&\,\,-\cC[V,\cM_N(t)-\scrR(t),\cM_N(t)]\,.
\ea
$$
On the other hand, Lemma \ref{L-EMQMargi1} shows that $\cM_N(t)$ is the adjoint of the continuous linear map
$$
\cL^1_s(\fH_N)\ni G\mapsto G_{:1}\in\cL^1(\fH)\,,
$$
where $G_{:1}$ is the unique element of $\cL^1(\fH)$ such that
$$
\Tr_\fH(GA)=\Tr_{\fH_N}(G(J_1A))\,,\quad\hbox{ for all }A\in\cL(\fH)\,.
$$
Therefore, one has 
$$
\cC[V,\scrR(t),\scrR(t)-\cM_N(t)]=\ad^*(V_{R(t)})(\cM_N(t)-\scrR(t))
$$
by Lemma \ref{L-LRcP}, and hence
\be\lb{EqM-R}
\ba
i\hbar\d_t(\cM_N(t)-\scrR(t))=&\ad^*\left(-\tfrac12\hbar^2\Dlt+V_{R(t)}\right)(\cM_N(t)-\scrR(t))
\\
&-\cC[V,\cM_N(t)-\scrR(t),\cM_N(t)]\,.
\ea
\ee

\smallskip
\noindent
\textit{Step 2.} Let $A^{in}\in\cL(\fH)$, and let $t\mapsto A(t)$ be the solution to the Cauchy problem
$$
\left\{
\ba
{}&i\hbar\d_tA(t)=[-\tfrac12\hbar^2\Dlt+V_{R(t)},A(t)]\,,
\\
&A(0)=A^{in}\,.
\ea
\right.
$$
Thus
$$
\ba
i\hbar\d_t((\mu^*_N(t)-\scrR(t))(A(t)))=(\cM_N(t)-\scrR(t))(i\hbar\d_tA(t))
\\
+\left(\ad^*\left(-\tfrac12\hbar^2\Dlt+V_{R(t)}\right)(\cM_N(t)-\scrR(t))\right)A(t)
\\
-\cC[V,\cM_N(t)-\scrR(t),\cM_N(t)]A(t)
\\
=(\cM_N(t)-\scrR(t))(i\hbar\d_tA(t)-[-\tfrac12\hbar^2\Dlt+V_{R(t)},A(t)])
\\
-\cC[V,\cM_N(t)-\scrR(t),\cM_N(t)]A(t)
\\
=-\cC[V,\cM_N(t)-\scrR(t),\cM_N(t)]A(t)&\,.
\ea
$$
Hence
$$
\ba
(\cM_N(t)-\scrR(t))(A(t))=&\,\,(\cM_N(0)-\scrR(0))(A(0))
\\
&\,\,-\frac{i}{\hbar}\int_0^t\cC[V,\cM_N(s)-\scrR(s),\cM_N(s)]A(s)\dd s\,.
\ea
$$

\smallskip
\noindent
\textit{Step 3.} Let $W(t)$ be a unitary operator on $\fH_N$ such that
$$
\ba
\|(\cM_N(t)-\scrR(t))(A(t))F^{in}_N\|_{\cL^1(\fH_N)}
\\
=\Tr_{\fH_N}((\cM_N(t)-\scrR(t))(A(t))F^{in}_NW(t))&\,;
\ea
$$
then
\be\lb{IntIneq0}
\ba
\|(\cM_N(t)\!-\!\scrR(t))(A(t))F^{in}_N\|_{\cL^1(\fH_N)}
\\
=\Tr_{\fH_N}((\cM_N(0)\!-\!\scrR(0))(A(0))F^{in}_NW(t))
\\
-\frac{i}{\hbar}\int_0^t\Tr_{\fH_N}((\cC[V,\cM_N(s)-\scrR(s),\cM_N(s)]A(s))F^{in}_NW(t))\dd s
\\
\le\|(\cM_N(0)-\scrR(0))(A(0))F^{in}_N\|_{\cL^1(\fH_N)}
\\
+\frac1{\hbar}\int_0^t|\Tr_{\fH_N}((\cC[V,\cM_N(s)-\scrR(s),\cM_N(s)]A(s))F^{in}_NW(t))|\dd s&\,.
\ea
\ee
Returning to the formula \eqref{DefC}, we see that
\be\lb{|C|<}
\ba
|\Tr_{\fH_N}((\cC[V,\cM_N(s)-\scrR(s),\cM_N(s)]A(s))F^{in}_NW(t))|&
\\
=
\tfrac1{(2\pi)^d}\left|\int_{\bR^d}\Tr_{\fH_N}((\cC[E_\om,\cM_N(s)-\scrR(s),\cM_N(s)]A(s))F^{in}_NW(t))\hat V(\dd\om)\right|
\\
\le\tfrac1{(2\pi)^d}\int_{\bR^d}|\Tr_{\fH_N}((\cC[E_\om,\cM_N(s)-\scrR(s),\cM_N(s)]A(s))F^{in}_NW(t))||\hat V|(\dd\om)&\,.
\ea
\ee
Since
$$
\ba
\cC[E_\om,\cM_N(s)\!-\!\scrR(s),\cM_N(s)]A(s)=&\,((\cM_N(s)\!-\!\scrR(s))E^*_\om)(\cM_N(s)(E_\om A(s)))
\\
&\,-\!(\cM_N(s)(A(s)E_\om))((\cM_N(s)\!-\!\scrR(s))E^*_\om)
\\
=&\,(\cM_N(s)[E_\om,A(s)])((\cM_N(s)-\scrR(s))E^*_\om)
\\
&\,+[\cM_N(s)E^*_\om,\cM_N(s)(E_\om A(s))]
\\
=&\,(\cM_N(s)[E_\om,A(s)])((\cM_N(s)-\scrR(s))E^*_\om)
\\
&\,+\frac1N\cM_N(s)[E^*_\om,E_\om A(s)]\,,
\ea
$$
(where the last equality follows from Lemma \ref{L-ComMu}), one has
\be\lb{|Com|<0}
\ba
|\Tr_{\fH_N}((\cC[E_\om,\cM_N(s)-\scrR(s),\cM_N(s)]A(s))F^{in}_NW(t))|
\\
\le\|W(t)(\cM_N(s)[E_\om,A(s)])\|\|((\cM_N(s)-\scrR(s))E^*_\om)F^{in}_N\|_{\cL^1(\fH_N)}
\\
+\frac1N\|W(t)(\cM_N(s)[E^*_\om,E_\om A(s)])\|\|F^{in}_N\|_{\cL^1(\fH_N)}&\,.
\ea
\ee

By construction
$$
\ba
\|\cM_N(t)A\|=\|U_N(t)(\cM^{in}_NA)U_N(t)^*\|
\\
=\|\cM^{in}_NA\|\le\frac1N\sum_{k=1}^N\|J^*_kA\|=\|A\|&\,.
\ea
$$
Since $W(t)$ is unitary, the inequality above implies that
$$
\|W(t)(\cM_N(s)[E_\om,A(s)])\|\le\|[E_\om,A(s)]\|\,,
$$
and
$$
\|W(t)(\cM_N(s)[E^*_\om,E_\om A(s)])\|\le\|[E^*_\om,E_\om A(s)]\|=\|[E^*_\om,A(s)]\|
$$
because $[E^*_\om,E_\om A(s)]=E_\om[E^*_\om,A(s)]$ and $E_\om$ is unitary. 

\smallskip
Since $F^{in}_N\in\cD_s(\fH_N)$, we conclude from the two previous inequalities and \eqref{|Com|<0} that
\be\lb{|Com|<}
\ba
|\Tr_{\fH_N}((\cC[E_\om,\cM_N(s)-\scrR(s),\cM_N(s)]A(s))F^{in}_NW(t))|
\\
\le\|[E_\om,A(s)]\|\|((\cM_N(s)-\scrR(s))E^*_\om)F^{in}_N\|_{\cL^1(\fH_N)}+\frac1N\|[E^*_\om,A(s)]\|&\,.
\ea
\ee

\smallskip
\noindent
\textit{Step 4.} Pick $T>0$; for each $t\in\bR$, define
\be\lb{DefdNT}
d^T_N(t):=\sup_{B\in\cW[T]}\|((\cM_N(t)-\scrR(t))(S(t,0)BS(0,t)))F^{in}_N\|_{\cL^1(\fH_N)}\,,
\ee
where
$$
\cW[T]:=\bigcup_{0\le\tau\le T}\cV[\tau]\,,\quad\hbox{ with }\cV[\tau]:=\left\{S(0,\tau)\OP^W_{\hbar}[a]S(\tau,0)\,:\,a\in\cB^{d+5}\right\}\,.
$$
Since $E_\om$ is the Weyl operator with symbol $(x,\xi)\mapsto e^{i\om\cdot x}$, one has obviously
$$
\max(1,|\om|)^{-d-5}E_\om\in\cV[0]=\cW[0]\,.
$$
Hence
$$
\ba
\|((\cM_N(s)-\scrR(s))E_\om)F^{in}_N\|_{\cL^1(\fH_N)}
\\
\le\max(1,|\om|)^{d+5}\sup_{a\in\cB^{d+5}}\|((\cM_N(s)-\scrR(s))\OP^W_{\hbar}[a])F^{in}_N\|_{\cL^1(\fH_N)}
\\
=\max(1,|\om|)^{d+5}\sup_{B\in\cV[s]}\|((\cM_N(s)-\scrR(s))(S(s,0)BS(0,s)))F^{in}_N\|_{\cL^1(\fH_N)}
\\
\le \max(1,|\om|)^{d+5}d^T_N(s)&\,.
\ea
$$
Therefore, inequality \eqref{|Com|<} becomes
\be\lb{|Com|<ter}
\ba
\frac1\hbar|\Tr_{\fH_N}((\cC[E_\om,\cM_N(s)-\scrR(s),\cM_N(s)]A(s))F^{in}_NW(t))|
\\
\le\frac1\hbar\|[E_\om,A(s)]\|\max(1,|\om|)^{d+5}d^T_N(s)+\frac1{N\hbar}\|[E^*_\om,A(s)]\|&\,.
\ea
\ee

We are left with the task of estimating 
$$
\frac{\|[E_\om,A(s)]\|}\hbar\quad\hbox{ for all }A(s)=S(s,0)A^{in}S(0,s)\hbox{ with }A^{in}\in\cW[T]\,.
$$
In other words, $A(s)$ is of the form
$$
S(s,\tau)\OP^W_{\hbar}[a]S(\tau,s)\quad\hbox{ with }a\in\cB^{d+5}\,.
$$
The key estimate for all such operators is provided by the following lemma.

\begin{Lem}\lb{L-KeyL}
Under the assumptions of Theorem \ref{T-MFUnif}, there exists a positive constant $C_d\equiv C_d[\bV]$ such that, for all $A^{in}\in\cW[T]$, all $\om\in\bR^d$ and all $s\in[0,T]$, the operator $A(s)=S(s,0)A^{in}S(0,s)$ satisfies
$$
\frac{\|[E_\om,A(s)]\|}\hbar\le C_d[\bV]\left(|\om|e^{(d+5)\tilde\Ga_2T}+\hbar^2\frac{e^{(d+5)\tilde\Ga_2T}-1}{(d+5)\tilde\Ga_2}\bV\right)\,,
$$
where
$$
\tilde\Ga_2:=\|\grad^2V\|_{L^\infty(\bR^d)}\,.
$$
\end{Lem}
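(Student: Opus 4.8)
The plan is to combine the algebraic structure of $E_\om$ with semiclassical estimates propagated along the Hartree flow. First I would unfold $A^{in}=S(0,\tau)\OP^W_\hbar[a]S(\tau,0)$ for some $\tau\in[0,T]$ and $a\in\cB^{d+5}$ (this is what $A^{in}\in\cW[T]$ means), so that $A(s)=S(s,0)A^{in}S(0,s)=S(s,\tau)\OP^W_\hbar[a]S(\tau,s)$ solves $i\hbar\d_sA(s)=[H(s),A(s)]$ with $H(s):=-\tfrac12\hbar^2\Dlt+V_{R(s)}$ and $S(\cdot,\cdot)$ the (unitary) Hartree propagator. The algebraic core is that $E_\om$ implements a shift of momentum by $\hbar\om$: from $[E_\om,\Dlt]=|\om|^2E_\om-2iE_\om(\om\cdot\grad)$ and $[E_\om,V_{R(s)}]=0$ one gets, with $P:=-i\hbar\grad$, the \emph{exact} identities
\[
\ba
&[E_\om,H(s)]=-\hbar\,\OP^W_\hbar\big[(\om\cdot\xi)e^{i\om\cdot x}\big]\,,\qquad E_\om H(s)E_\om^*=H(s)-\hbar\,\om\cdot P+\tfrac12\hbar^2|\om|^2\,,
\\
&E_\om\OP^W_\hbar[b]E_\om^*=\OP^W_\hbar\big[b(x,\xi-\hbar\om)\big]\quad\hbox{ for every symbol }b\,.
\ea
\]

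Denoting by $S^{(\om)}$ the propagator of $E_\om H(s)E_\om^*$ and setting $\tau_{\hbar\om}a:=a(\cdot,\cdot-\hbar\om)$, I would then use $\|[E_\om,A(s)]\|=\|E_\om A(s)E_\om^*-A(s)\|$ and telescope (unitarity of $S^{(\om)}$) to get $\|[E_\om,A(s)]\|\le\|\OP^W_\hbar[\tau_{\hbar\om}a-a]\|+\|S^{(\om)}(s,\tau)\OP^W_\hbar[a]S^{(\om)}(\tau,s)-S(s,\tau)\OP^W_\hbar[a]S(\tau,s)\|$. For the first summand, $\tau_{\hbar\om}a-a=-\hbar\int_0^1(\om\cdot\d_\xi a)(\cdot,\cdot-\th\hbar\om)\dd\th$, and as a $\xi$-translation leaves every symbol seminorm unchanged, the Calder\'on--Vaillancourt theorem (uniformly in $\hbar\in(0,1]$; cf. Appendix \ref{A-Weyl}) yields $\|\OP^W_\hbar[\tau_{\hbar\om}a-a]\|\le\hbar\,C_d|\om|$. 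This is exactly where the leading $\om$-dependence stays linear: the oscillation $e^{i\om\cdot x}$ enters only through this bounded difference and through the $O(\hbar)$-small Hamiltonian perturbation $-\hbar\,\om\cdot P$ treated below, and is never passed to Calder\'on--Vaillancourt, which would cost a spurious $|\om|^{d+1}$.

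For the second summand, put $B(s):=S(s,\tau)\OP^W_\hbar[a]S(\tau,s)$ and let $B^{(\om)}(s)$ be its analogue built from $S^{(\om)}$. As $E_\om H(s)E_\om^*-H(s)=-\hbar\,\om\cdot P+\tfrac12\hbar^2|\om|^2$ and the scalar part drops out of commutators, a Duhamel formula plus unitarity give $\|B^{(\om)}(s)-B(s)\|\le\big|\int_\tau^s\|[\om\cdot P,B(s')]\|\dd s'\big|$, while $[\om\cdot P,B(s')]=S(s',\tau)\,[\widetilde{\om\cdot P}(s'),\OP^W_\hbar[a]]\,S(\tau,s')$ with $\widetilde{\om\cdot P}(s'):=S(\tau,s')(\om\cdot P)S(s',\tau)$. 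By Egorov's theorem, $\widetilde{\om\cdot P}(s')=\OP^W_\hbar\big[\om\cdot\Xi_{s'\to\tau}+\hbar^2r_{\om,s'}\big]$, where $\Xi_{s'\to\tau}$ is the momentum component of the time-$s'$-to-time-$\tau$ Hamiltonian flow $\Phi_{s'\to\tau}$ of $h(t;x,\xi):=\tfrac12|\xi|^2+V_{R(t)}(x)$, and the remainder $r_{\om,s'}$ has symbol seminorms bounded in terms of those of the mean-field potential $V_{R(t)}$ (see \eqref{MFPot}) --- hence in terms of $\bV$, via $\|\grad^kV_{R(t)}\|_{L^\infty}\le\|\grad^kV\|_{L^\infty}\le\bV$ for $k\le d+6$ --- and of the derivatives of $\Phi$. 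Forming the Moyal bracket with $\OP^W_\hbar[a]$, dividing by $\hbar$, and combining Calder\'on--Vaillancourt with the variational bound $\|D^k\Phi_{s'\to\tau}\|\le C_ke^{k\tilde\Ga_2|s'-\tau|}$ (from the variational equation for the flow) for $k\le d+5$, one reaches, uniformly in $\hbar\in(0,1]$, $\tfrac1\hbar\|[\om\cdot P,B(s')]\|\le C_d\,|\om|\,(1+\hbar^2\bV)\,e^{(d+5)\tilde\Ga_2|s'-\tau|}$.

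Integrating this over $s'\in[\tau,s]\subseteq[0,T]$ brings out the factor $e^{(d+5)\tilde\Ga_2T}$ on the leading piece and $\int_0^Te^{(d+5)\tilde\Ga_2u}\dd u=\tfrac{e^{(d+5)\tilde\Ga_2T}-1}{(d+5)\tilde\Ga_2}$ on the $\hbar^2\bV$ piece; adding the first-summand bound and absorbing everything into $C_d\equiv C_d[\bV]$ gives the asserted inequality (the exact repackaging of the mixed $|\om|\hbar^2\bV$ term being immaterial downstream, where the lemma is used only after weighting by $(1+|\om|)^{d+6}|\hat V(\om)|$). I expect the real obstacle to be the one stressed above --- keeping the leading term linear in $|\om|$ although every symbol in play carries the oscillation $e^{i\om\cdot x}$ --- which is why the argument must be built around the momentum-translation covariance of Weyl quantization under $E_\om$ rather than around direct symbol-norm bounds; the secondary, pervasive issue is uniformity in $\hbar\in(0,1]$ and in $t\in[0,T]$ of the Egorov remainders and the flow-derivative estimates, exactly what the hypothesis $\bV<\infty$ secures, which explains why $\bV$ --- not merely $\Ga=\|\grad^2V\|_{L^\infty}$ --- enters the bound.
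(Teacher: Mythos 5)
Your approach is genuinely different from the paper's and the algebraic identities you build it on are correct, but there is a real gap in the Duhamel step that the paper's route is specifically designed to avoid.

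The paper proves Lemma \ref{L-KeyL} via Lemma \ref{L-[E,B]}, which decomposes the propagated observable itself: $\scrB_{\hbar}(t,s)=B_{\hbar}(t,s)+(\scrB_{\hbar}(t,s)-B_{\hbar}(t,s))$ with $B_{\hbar}(t,s)=\OP^W_{\hbar}[b\circ\Phi(t,s)]$ the Weyl quantization of the classically transported symbol. The commutator of $E_\om$ with the classical piece $B_{\hbar}$ is estimated using exactly the momentum--translation identity you highlight (Lemma \ref{L-EstimB}), producing the $|\om|e^{(d+3)\Ga_2|t-s|}$ contribution; and the commutator of $E_\om$ with the quantum remainder $\scrB_{\hbar}-B_{\hbar}$ is bounded \emph{brutally} by $2\|\scrB_{\hbar}-B_{\hbar}\|=O(\hbar^3)$, which is legitimate precisely because $E_\om$ is unitary (Lemma \ref{L-scrB-B}, via an Egorov remainder estimate for the bounded symbol $b\circ\Phi$). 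You instead conjugate $E_\om$ into the Hamiltonian and run Duhamel on the perturbed propagator $S^{(\om)}$, reducing to the integrand $\tfrac1\hbar\|[\om\cdot P,B(s')]\|$. This moves the bounded conjugator $E_\om$ into the \emph{unbounded} operator $\om\cdot P$, and that is where the gap opens: you can no longer absorb the quantum remainder of Egorov by a brutal norm estimate, because $[\om\cdot P,\;O(\hbar^3)]$ is not $O(\hbar^3)$ in operator norm without further structure on the $O(\hbar^3)$ piece. Your invocation of ``Egorov's theorem'' for $\widetilde{\om\cdot P}(s')=S(\tau,s')(\om\cdot P)S(s',\tau)$ is the crux and is not justified: the semiclassical Egorov remainder estimate proved in the paper (Lemmas \ref{L-scrB-B} and \ref{L-Qh}, following \cite{BR}) is stated for symbols in $C^{d+5}_b$ with bounded derivatives, whereas the symbol $\om\cdot\xi$ grows linearly in $\xi$. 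You would need a separate argument --- exploiting the exactness of the Moyal bracket with linear symbols and writing $[\om\cdot P,B(s')]=[\om\cdot P,\OP^W_{\hbar}[a\circ\Phi]]+[\om\cdot P,B(s')-\OP^W_{\hbar}[a\circ\Phi]]$ and then controlling the second term through symbolic, not merely operator-norm, information on $B(s')-\OP^W_{\hbar}[a\circ\Phi]$ --- and this is not sketched. As written, the step ``by Egorov's theorem, $\widetilde{\om\cdot P}(s')=\OP^W_{\hbar}[\om\cdot\Xi_{s'\to\tau}+\hbar^2 r_{\om,s'}]$ with controlled remainder'' asserts exactly what needs to be proved.

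A secondary point: even granting the above, your final bound has the form $C_d|\om|\bigl(1+(1+\hbar^2\bV)\tfrac{e^{(d+5)\tilde\Ga_2 T}-1}{(d+5)\tilde\Ga_2}\bigr)$, which is not dominated by $C_d[\bV]\,|\om|\,e^{(d+5)\tilde\Ga_2 T}$ uniformly in $\tilde\Ga_2$ (the extra $1/\tilde\Ga_2$, equivalently the extra factor $T$ as $\tilde\Ga_2\to0$, cannot be absorbed into $C_d[\bV]$). You flag the harmless $|\om|\hbar^2\bV$ cross term, but not this one. It would still suffice for the downstream Gronwall argument in Step 5 of the proof of Theorem \ref{T-MFUnif}, merely changing the form of the double-exponential constant, but it does mean the lemma as stated is not what you would obtain.

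In short: your $E_\om$-conjugation idea is clean and its algebra is right, but you have traded the paper's commutator with a \emph{unitary} operator --- on which a crude $O(\hbar^3)$ bound suffices --- for a commutator with an \emph{unbounded} operator, and the Egorov input you would need to close that loop for $\om\cdot P$ is not available from the paper's lemmas and is left unproved.
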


\smallskip
The proof of Lemma \ref{L-KeyL} rests on semiclassical estimates for Weyl operators propagated by a time-dependent Schr\"odinger operator --- specifically, by the adjoint Hartree flow --- and is rather lengthy. This estimate is 
based on mathematical techniques  significantly different from the mathematical apparatus used in most of this paper. We shall therefore take Lemma \ref{L-KeyL} for granted, finish the proof of Theorem \ref{T-MFUnif}, and 
postpone the proof of Lemma \ref{L-KeyL} until the next section.

\smallskip
Inserting the bound provided by Lemma \ref{L-KeyL} shows that for all $s,t\in[0,T]$ and all $A^{in}\in\cW[T]$, one has
\be\lb{|Com|<4to}
\ba
\frac1\hbar|\Tr_{\fH_N}((\cC[E_\om,\cM_N(s)-\scrR(s),\cM_N(s)]A(s))F^{in}_NW(t))|
\\
\le C_d[\bV]e^{(d+5)\tilde\Ga_2T}\left(|\om|+\hbar^2\frac{\bV}{(d+5)\tilde\Ga_2}\right)\left(\max(1,|\om|)^{d+5}d^T_N(s)+\frac1N\right)&\,.
\ea
\ee

\smallskip
\noindent
\textit{Step 5.} Now we use the inequality \eqref{|Com|<4to} to bound the right hand side of the integral inequality \eqref{IntIneq0}. One finds that, for each $t\in[0,T]$ and each $A^{in}\in\cW[T]$,
\be\lb{IntIneq3}
\ba
\|(\cM_N(t)\!-\!\scrR(t))(A(t))F^{in}_N\|_{\cL^1(\fH_N)}\!\le\!\|((\cM_N(0)-\scrR(0))A^{in})F^{in}_N\|_{\cL^1(\fH_N)}
\\
+C_d[\bV]e^{(d+5)\tilde\Ga_2T}\!\!\!\int_0^t\!\!\int_{\bR^d}\left(|\om|\!+\!\hbar^2\tfrac{\bV}{(d+5)\tilde\Ga_2}\right)\left(\max(1,|\om|)^{d+5}d^T_N(s)\!+\!\tfrac1N\right)|\hat V(\om)|\tfrac{\dd\om\dd s}{(2\pi)^d}
\\
\le d^T_N(0)\!+\!C_d[\bV]\bV\left(1\!+\!\hbar^2\tfrac{\bV}{(d+5)\tilde\Ga_2}\right)e^{(d+5)\tilde\Ga_2T}\!\!\!\int_0^t\left(d^T_N(s)\!+\!\frac1N\right)\dd s&\,,
\ea
\ee
since
$$
\|((\cM_N(0)-\scrR(0))A^{in})F^{in}_N\|_{\cL^1(\fH_N)}\le d^T_N(0)\,.
$$
Observing that $A(t)=S(t,0)A^{in}S(0,t)$ with $t\in[0,T]$, and maximizing both sides of the inequality \eqref{IntIneq3} as $A^{in}$ runs through $\cW[T]$, one finds that
\be\lb{IntIneq4}
d^T_N(t)\le d^T_N(0)+C_d[\bV]\bV\left(1+\hbar^2\frac{\bV}{(d+5)\tilde\Ga_2}\right)e^{(d+5)\tilde\Ga_2T}\int_0^t\left(d^T_N(s)+\frac1N\right)\dd s
\ee
for all $t\in[0,T]$. Applying Gronwall's lemma to the integral inequality \eqref{IntIneq4} shows that
\be\lb{GronwdNT}
d^T_N(T)+\frac1N\le \left(d^T_N(0)+\frac1N\right)\exp\left(C_d[\bV] Te^{(d+5)\tilde\Ga_2T}\bV\left(1+\hbar^2\frac{\bV}{(d+5)\tilde\Ga_2}\right)\right)\,.
\ee

\smallskip
\noindent
\textit{Step 6.} Next, observe that
$$
\ba
d^T_N(T)\ge&\sup_{B\in\cV[T]}\|((\cM_N(T)-\scrR(T))(S(T,0)BS(0,T)))F^{in}_N\|_{\cL^1(\fH_N)}
\\
=&\sup_{B\in\cV[0]}|\Tr_{\fH_N}(((\cM_N(T)-\scrR(T))B)F^{in}_N)|
\\
=&\sup_{B\in\cV[0]}|\Tr_\fH((F_{N:1}(T)-R(T))B)|\,.
\ea
$$
On the other hand
$$
\ba
\Tr_\fH((F_{N:1}(T)-R(T))\OP^W_{\hbar}[a])
\\
=\iint_{\bR^d\times\bR^d}\left(W_{\hbar}[F_{N:1}(T)]-W_{\hbar}[R(T)]\right)(x,\xi)a(x,\xi)\dd x\dd\xi&\,,
\ea
$$
according to formula \eqref{DualWOpW} in Appendix B. Hence
\be\lb{dNT(T)>}
\ba
d^T_N(T)\ge\sup_{a\in\cB^{d+5}}\left|\iint_{\bR^d\times\bR^d}\left(W_{\hbar}[F_{N:1}(T)]-W_{\hbar}[R(T)]\right)(x,\xi)a(x,\xi)\dd x\dd\xi\right|
\\
=\left\|W_{\hbar}[F_{N:1}(T)]-W_{\hbar}[R(T)]\right\|'_{d+5,\infty}
\ea
\ee

\smallskip
\noindent
\textit{Step 7.} Next we seek an upper bound for $d^T_N(0)$, to be inserted on the right hand side of \eqref{GronwdNT}. By the Calderon-Vaillancourt theorem (see Theorem \ref{T-CV} in Appendix B), one has
$$
\cV[0]\subset \overline{B(0,\g_d)}_{\cL(\fH)}\,.
$$
Hence
\be\lb{dNT(0)<}
\ba
d^T_N(0):=&\sup_{B\in\cV[0]}\|((\cM_N(0)-\scrR(t))B)F^{in}_N\|_{\cL^1(\fH_N)}
\\
\le&\sup_{\|B\|\le\g_d}\|((\cM_N(0)-\scrR(t))B)F^{in}_N\|_{\cL^1(\fH_N)}
\\
\le&\g_d\sup_{\|B\|\le 1}\left\|((\cM_N(0)-\scrR(0))B)\sqrt{F^{in}_N}\sqrt{F_N^{in}}\right|
\\
\le&\g_d\sup_{\|B\|\le 1}\left\|((\cM_N(0)-\scrR(0))B)\sqrt{F^{in}_N}\right\|_{\cL^2(\fH_N)}\,,
\ea
\ee
so that one is left with the task of computing
$$
\left\|((\cM_N(0)-\scrR(0))B)\sqrt{F^{in}_N}\right\|_{\cL^2(\fH_N)}\,.
$$
One expands
$$
\ba
\left\|((\cM_N(0)-\scrR(0))B)\sqrt{F^{in}_N}\right\|^2_{\cL^2(\fH_N)}
\\
=\Tr_{\fH_N}\left(\sqrt{F^{in}_N}((\cM_N(0)-\scrR(0))B)^*((\cM_N(0)-\scrR(0))B)\sqrt{F^{in}_N}\right)
\\
=\Tr_{\fH_N}(((\cM_N(0)-\scrR(0))B)^*((\cM_N(0)-\scrR(0))B)F^{in}_N)
\\
=\Tr_{\fH_N}((\cM_N(0)B)^*(\cM_N(0)B)F^{in}_N)-\Tr_{\fH_N}((\cM_N(0)B)^*(\scrR(0)B)F^{in}_N)
\\
-\Tr_{\fH_N}((\scrR(0)B)^*(\cM_N(0)B)F^{in}_N)+\Tr_{\fH_N}((\scrR(0)B)^*(\scrR(0)B)F^{in}_N)&\,.
\ea
$$
One has
$$
\ba
\Tr_{\fH_N}((\scrR(0)B)^*(\scrR(0)B)F^{in}_N)=&|\Tr_\fH(R(0)B)|^2\Tr_{\fH_N}(F^{in}_N)
\\
=&|\Tr_\fH(R(0)B)|^2\,,
\ea
$$
and
$$
\ba
\Tr_{\fH_N}((\scrR(0)B)^*(\cM_N(0)B)F^{in}_N)=&\overline{\Tr_\fH((R(0)B)}\Tr_{\fH_N}((\cM_N(0)B)F^{in}_N)
\\
=&\overline{\Tr_\fH(R(0)B)}\Tr_{\fH}(BF^{in}_{N:1})\,.
\ea
$$
Then
$$
\ba
\Tr_{\fH_N}((\mu^*_N(0)(B))^*\scrR(0)(B)F^{in}_N)=\Tr_{\fH_N}(F^{in}_N(\mu^*_N(0)(B))^*\scrR(0)(B))
\\
=\overline{\Tr_{\fH_N}((F^{in}_N(\mu^*_N(0)(B))^*\scrR(0)(B))^*)}=\overline{\Tr_{\fH_N}(\scrR(0)(B)^*\mu^*_N(0)(B)F^{in}_N)}
\\
=\Tr_\fH(R(0)B)\overline{\Tr_{\fH}(BF^{in}_{N:1})}&\,.
\ea
$$
It remains to compute
$$
\ba
\Tr_{\fH_N}((\cM_N(0)B)^*(\cM_N(0)B)F^{in}_N)
\\
=\frac1{N^2}\sum_{1\le k\not=l\le N}\Tr_{\fH_N}\Tr_{\fH_N}((J_kB^*)(J_lB)F^{in}_N)
\\
+\frac1{N^2}\sum_{n=1}^N\Tr_{\fH_N}((J_kB^*)(J_kB)F^{in}_N)
\\
=\frac{N-1}N\Tr_{\fH_2}((B^*\otimes B)F^{in}_{N:2})+\frac1N\Tr_{\fH}(B^*BF^{in}_{N:1})&\,.
\ea
$$

Summarizing, we have found that
$$
\ba
\left\|(\cM_N(0)-\scrR(0))(B)\sqrt{F^{in}_N}\right\|^2_{\cL^2(\fH_N)}
\\
=\frac{N-1}N\Tr_{\fH_2}((B^*\otimes B)F^{in}_{N:2})+\frac1N\Tr_{\fH}(B^*BF^{in}_{N:1})
\\
-2\Re\left(\overline{\Tr_\fH(R(0)B)}\Tr_{\fH}(BF^{in}_{N:1})\right)+|\Tr_\fH(R(0)B)|^2&\,.
\ea
$$
One can rearrange this term as
$$
\ba
\left\|((\cM_N(0)-\scrR(0))B)\sqrt{F^{in}_N}\right\|^2_{\cL^2(\fH_N)}
\\
=\frac{N-1}N\Tr_{\fH_2}((B^*\otimes B)(F^{in}_{N:2}-F^{in}_{N:1}\otimes F^{in}_{N:1}))
\\
+\frac1N(\Tr_{\fH}(B^*BF^{in}_{N:1})-|\Tr_\fH(BF^{in}_{N:1})|^2)
\\
+|\Tr_{\fH}(B(F^{in}_{N:1}-R(0)))|^2&\,.
\ea
$$
Hence
\be\lb{dN(0)<}
\ba
\sup_{\|B\|\le 1}\left\|((\cM_N(0)-\scrR(0))B)\sqrt{F^{in}_N}\right\|^2_{\cL^2(\fH_N)}
\\
\le\frac{N-1}N\|F^{in}_{N:2}-F^{in}_{N:1}\otimes F^{in}_{N:1}\|_{\cL^1(\fH_2)}+\|F^{in}_{N:1}-R(0)\|_{\cL^1(\fH)}^2+\frac1N&\,.
\ea
\ee

Since we have assumed in Theorem \ref{T-MFUnif} that $F^{in}_N=(R^{in})^{\otimes N}$, one has
$$
F^{in}_{N:1}=R^{in}=R(0)\quad\hbox{ and }\quad F^{in}_{N:2}=(R^{in})^{\otimes 2}=F^{in}_{N:1}\otimes F^{in}_{N:1}\,.
$$
Therefore, we conclude from \eqref{dNT(0)<}  and  \eqref{dN(0)<} that
\be\lb{dNT(0)<fin}
d^T_N(0)\le\frac{\g_d}{\sqrt{N}}\,.
\ee

Inserting the bounds \eqref{dNT(0)<fin} and \eqref{dNT(T)>} in \eqref{GronwdNT}, and setting $\Ga=\tilde\Ga_2$, we conclude that
$$
\ba
\left\|W_{\hbar}[F_{N:1}(T)]-W_{\hbar}[R(T)]\right\|'_{d+5,\infty}
\\
\le\left(\g_dd_N(0)+\frac1N\right)\exp\left(C_d[\bV] Te^{(d+5)\Ga T}\bV\left(1+\hbar^2\frac{\bV}{(d+5)\Ga}\right)\right)
\\
\le\frac{\g_d+1}{\sqrt{N}}\exp\left(C_d[\bV] Te^{(d+5)\Ga T}\bV\left(1+\hbar^2\frac{\bV}{(d+5)\Ga}\right)\right)&\,,
\ea
$$
which is precisely the desired inequality.

\smallskip
\noindent
\textbf{Remark.} By comparison with the proofs of Theorem 2.4 in \cite{FGMouPaul}, a striking feature of the present proof is that it uses a different distance for each time $t$ at which we seek to compare the Hartree solution $R(t)$ and the first 
marginal $F_{N:1}(t)$ of the $N$-particle density. Specifically, for each $T>0$, the distance $d^T_N(T)$ is used to estimate the difference, $W_{\hbar}[F_{N:1}(T)]-W_{\hbar}[R(T)]$. On the contrary, in \cite{FGMouPaul}, the convergence rate for 
all time intervals is estimated in terms of a single pseudo-distance constructed by analogy with the quadratic Monge-Kantorovich (or Wasserstein) distance used in optimal transport.

%%%%%%%%%%%%%%%%%%%%%%%%%%%%%%%%%%%%%%%%%%%%%%%%%%%%%%%%%%%%%%%%%%%%%%%%%%%%%%%%%%%%%%%%%%%%%%%%%%%%%%%%%%%%%%%%%%%%%%%%%

\section{Proof of Lemma \ref{L-KeyL}}\lb{S-SC}

%%%%%%%%%%%%%%%%%%%%%%%%%%%%%%%%%%%%%%%%%%%%%%%%%%%%%%%%%%%%%%%%%%%%%%%%%%%%%%%%%%%%%%%%%%%%%%%%%%%%%%%%%%%%%%%%%%%%%%%%%

Let $\scrV\equiv\scrV(t,x)$ be a continuous, real-valued, time-dependent potential defined on $\bR\times\bR^d$ such that, for each $t\in\bR$, the unbounded operator
$$
-\tfrac12\hbar^2\Dlt_x+\scrV(t,x)
$$
has a self-adjoint extension on $L^2(\bR^d)$. For each $\tau\in\bR$, denote by $t\mapsto U_{\hbar}(t,\tau)$ the time-dependent unitary operator on $\fH$ such that
$$
\left\{
\ba
{}&i\hbar\d_tU_{\hbar}(t,\tau)=(-\tfrac12\hbar^2\Dlt_x+\scrV(t,x))U_{\hbar}(t,\tau)\,,
\\
&U_{\hbar}(\tau,\tau)=I_\fH\,,
\ea
\right.
$$
and set
$$
\scrB_{\hbar}(t,s):=U_{\hbar}(t,s)\OP^W_{\hbar}[b]U_{\hbar}(s,t)\,.
$$
Henceforth we denote by $\hat\scrV$ the Fourier transform of $\scrV$ in the $x$-variable.

\begin{Lem}\lb{L-[E,B]}
Let $\scrV\equiv\scrV(t,x)\in C_b(\bR\times\bR^d)$ be a real-valued potential satisfying
$$
\bW:=\tfrac1{(2\pi)^d}\int_{\bR^d}\sup_{\tau\in\bR}|\hat\scrV(\tau,\xi)|(1+|\xi|)^{d+6}\dd\xi<\infty\,.
$$
Then there exists $C_d\equiv C_d[\bW]>0$ that is a nondecreasing function of $\bW$ such that, for each $b\equiv b(x,\xi)\in C^{d+5}_b(\bR^d\times\bR^d)$, each $s,t\in\bR$, each $\om\in\bR^d$ and each $\hbar\in(0,1]$, one has
$$
\ba
\left\|\frac{[E_\om,\scrB_{\hbar}(t,s)]}\hbar\right\|\le C_d\|b\|_{d+5,\infty}\left(|\om|e^{(d+5)\Ga_2|t-s|}+\hbar^2\,\frac{e^{(d+5)\Ga_2|t-s|}-1}{(d+5)\Ga_2}\bW\right)&\,,
\ea
$$
where 
$$
\Ga_2:=\|\grad^2_x\scrV\|_{L^\infty(\bR\times\bR^d)}\,.
$$
\end{Lem}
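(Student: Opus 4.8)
The plan is to reduce the operator--norm estimate to an estimate on Weyl symbols, for which the crucial input is the \emph{exact} conjugation formula for $E_\om$: since $E_\om$ is the Weyl operator with symbol $(x,\xi)\mapsto e^{i\om\cdot x}$, conjugation by $E_\om$ merely translates symbols in momentum, $E_\om\,\OP^W_\hbar[a]\,E_\om^{-1}=\OP^W_\hbar[\tau_{\hbar\om}a]$ with $(\tau_{\hbar\om}a)(x,\xi):=a(x,\xi-\hbar\om)$, whence
$$
\frac{[E_\om,\OP^W_\hbar[a]]}{\hbar}=\OP^W_\hbar\!\left[\frac{\tau_{\hbar\om}a-a}{\hbar}\right]E_\om\,.
$$
Applying this with $\OP^W_\hbar[a]=\scrB_\hbar(t,s)$, I would let $b_\hbar(t)$ denote the Weyl symbol of $\scrB_\hbar(t,s)$ (so $b_\hbar(s)=b$), set $g_\om(t):=\hbar^{-1}(\tau_{\hbar\om}b_\hbar(t)-b_\hbar(t))$, and combine unitarity of $E_\om$ with the Calder\'on--Vaillancourt theorem (Theorem~\ref{T-CV}) to get
$$
\left\|\frac{[E_\om,\scrB_\hbar(t,s)]}{\hbar}\right\|=\|\OP^W_\hbar[g_\om(t)]\|\le \g_d\,\|g_\om(t)\|_{d+5,\infty}\,.
$$
Everything then reduces to propagating the symbol norm $\|g_\om(t)\|_{d+5,\infty}$, the index $d+5$ being deliberately generous so as to absorb the handful of derivatives lost at the steps below (Calder\'on--Vaillancourt itself consumes only $O(d)$ of them, and $\grad_\xi$ one more, so that $g_\om(s)$ is controlled by $\|b\|_{d+5,\infty}$).

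Next I would derive the evolution equation for $g_\om(t)$. The symbol $b_\hbar(t)$ obeys the quantum Liouville equation $\d_t b_\hbar=\{h_t,b_\hbar\}_M$, with $h_t(x,\xi):=\tfrac12|\xi|^2+\scrV(t,x)$ and $\{\cdot,\cdot\}_M$ the Moyal bracket (the symbol of $\tfrac1{i\hbar}$ times the commutator); since $\tfrac12|\xi|^2$ is quadratic it contributes only its Poisson bracket, so
$$
\{h_t,f\}_M=\xi\cdot\grad_xf-\grad_x\scrV(t,\cdot)\cdot\grad_\xi f+\hbar^2\,\cR_\hbar(t)[f]\,,
$$
where the quantum correction $\cR_\hbar(t)$ is the higher-order part of the Moyal expansion of $\scrV$ and involves only the odd derivatives $\grad_x^{2j+1}\scrV$, $j\ge1$, contracted against $\grad_\xi^{2j+1}f$; the weight $(1+|\xi|)^{d+6}$ carried by $\bW$ is exactly what is needed for the bound $\|\cR_\hbar(t)[f]\|_{n,\infty}\le C_d\,\bW\,\|f\|_{n+O(1),\infty}$ at the relevant order $n$. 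Using the covariance identity $\tau_{\hbar\om}\{f,g\}_M=\{\tau_{\hbar\om}f,\tau_{\hbar\om}g\}_M$, together with $\tau_{\hbar\om}h_t=h_t+\hbar q_\om$ where $q_\om(x,\xi):=-\om\cdot\xi+\tfrac12\hbar|\om|^2$ is affine in $\xi$ (hence has vanishing Moyal corrections), one sees that $\d_t b_\hbar$ and $\d_t(\tau_{\hbar\om}b_\hbar)$ differ only by $\hbar\{q_\om,\tau_{\hbar\om}b_\hbar\}=-\hbar\,\om\cdot\tau_{\hbar\om}(\grad_xb_\hbar)$, which yields the key equation
$$
\d_tg_\om(t)=\{h_t,g_\om(t)\}_M-\om\cdot\tau_{\hbar\om}\big(\grad_xb_\hbar(t)\big)\,:
$$
$g_\om$ solves the same Moyal transport as $b_\hbar$, forced by a source proportional to $\om$.

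Finally I would run a Gronwall argument on $\|g_\om(t)\|_{d+5,\infty}$ after transporting along the classical Hamiltonian flow $\Phi^{s\to t}$ of $h$, which neutralizes the free--transport part $\xi\cdot\grad_x-\grad_x\scrV\cdot\grad_\xi$; since the Jacobian of $\Phi^{s\to t}$ is amplified at the rate governed by $\Ga_2=\|\grad_x^2\scrV\|_{L^\infty}$, a norm controlling $d+5$ derivatives is amplified by $e^{(d+5)\Ga_2|t-s|}$. The initial datum $g_\om(s)=-\om\cdot\int_0^1(\grad_\xi b)(\cdot,\cdot-r\hbar\om)\,dr$ contributes the term $C_d|\om|e^{(d+5)\Ga_2|t-s|}\|b\|_{d+5,\infty}$; the $\om$--proportional source $-\om\cdot\tau_{\hbar\om}\grad_xb_\hbar(t)$, controlled with the help of the companion Egorov estimate $\|b_\hbar(t)\|_{d+5,\infty}\le C_d\|b\|_{d+5,\infty}e^{(d+5)\Ga_2|t-s|}$ (valid modulo $O(\hbar^2)$ corrections bounded by $\bW$), feeds into the same term after integration; and the quantum correction $\hbar^2\cR_\hbar(t)[g_\om(t)]$ hidden inside $\{h_t,g_\om\}_M$, once integrated against the amplification factor $\int_s^te^{(d+5)\Ga_2(t-\sigma)}\,d\sigma$, produces the contribution $C_d\,\hbar^2\,\bW\,\|b\|_{d+5,\infty}\,\dfrac{e^{(d+5)\Ga_2|t-s|}-1}{(d+5)\Ga_2}$. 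Collecting the three contributions and invoking Calder\'on--Vaillancourt gives the stated bound, with $C_d[\bW]$ nondecreasing in $\bW$ since $\bW$ enters only through the monotone estimates on $\cR_\hbar$ and on the $O(\hbar^2)$ Egorov corrections. The hard part is this last step: one must fix a symbol norm carrying the correct polynomial weights in $\xi$ so that the free transport is genuinely neutral, check that the linearized Hamiltonian flow amplifies $(d+5)$--jets at precisely the rate $(d+5)\Ga_2$, and keep the Moyal remainder under control by $\bW$ --- in short, carry out the simultaneous derivative bookkeeping in $b$ (up to $d+5$) and in $\scrV$ (up to $d+6$, which is the origin of the weight in $\bW$). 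Everything preceding it is routine Weyl calculus.
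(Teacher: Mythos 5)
The paper's proof does not work with the exact Weyl symbol $b_\hbar(t)$ of the quantum-propagated observable at all. Instead it splits $\scrB_\hbar(t,s)$ through the \emph{classically} propagated operator $B_\hbar(t,s):=\OP^W_\hbar[b\circ\Phi(t,s)]$ and writes
$$
\left\|\tfrac1\hbar[E_\om,\scrB_\hbar(t,s)]\right\|\le\tfrac2\hbar\|\scrB_\hbar(t,s)-B_\hbar(t,s)\|+\left\|\tfrac1\hbar[E_\om,B_\hbar(t,s)]\right\|\,.
$$
The $|\om|$-term then comes from $[E_\om,B_\hbar]$, where the symbol $b\circ\Phi(t,s)$ has explicit $C^{d+5}_b$ bounds with growth $e^{(d+5)\Ga_2|t-s|}$ (Lemma~\ref{L-BR}, purely ODE/flow estimates, no derivative loss). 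The $\hbar$-correction comes from the Egorov error $\|\scrB_\hbar-B_\hbar\|$, estimated by a \emph{Duhamel formula at the operator level}: unitarity of $U_\hbar$ gives $\|\scrB_\hbar-B_\hbar\|\le\hbar^{-1}\int_s^t\|Q_\hbar\|$, and the source $Q_\hbar$ is $O(\hbar^3)$ by Lemma~\ref{L-Qh}, estimated directly through Calder\'on--Vaillancourt applied to the explicit Moyal remainder symbol $R_\hbar$. The commutator with $E_\om$ on this error term is handled with the crude but sufficient inequality $\|[E_\om,X]\|\le2\|X\|$ --- lossy in $\om$, but cheap in derivatives.

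Your route is genuinely different: you propose to track the Weyl symbol $b_\hbar(t)$ of $\scrB_\hbar(t,s)$ itself, form $g_\om(t)=\hbar^{-1}(\tau_{\hbar\om}b_\hbar(t)-b_\hbar(t))$, derive its Moyal transport equation, and close a Gronwall loop on $\|g_\om(t)\|_{d+5,\infty}$. The derivation of the evolution equation for $g_\om$ (with the source $-\om\cdot\tau_{\hbar\om}\grad_xb_\hbar(t)$) is correct, and the conjugation identity $E_\om\OP^W_\hbar[a]E_\om^{-1}=\OP^W_\hbar[a(\cdot,\cdot-\hbar\om)]$ is exactly the one the paper uses in the proof of Lemma~\ref{L-EstimB}.

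However, there is a genuine gap at your ``hard part.'' The Gronwall argument on the \emph{symbol} norm $\|g_\om(t)\|_{d+5,\infty}$ does not close. The quantum correction $\hbar^2\cR_\hbar(t)[f]$ hidden in the Moyal bracket involves $\grad_\xi^3 f$ (and higher odd $\xi$-derivatives), so controlling it at level $d+5$ requires $\|f\|_{d+8,\infty}$; iterating the Gronwall inequality therefore loses three $\xi$-derivatives at every step and does not terminate. You also invoke, as input, a ``companion Egorov estimate'' $\|b_\hbar(t)\|_{d+5,\infty}\le C_d\,e^{(d+5)\Ga_2|t-s|}\|b\|_{d+5,\infty}$ for the \emph{quantum}-propagated symbol. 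This is precisely the sort of symbol-level statement whose proof would itself face the same derivative loss; it is not established in the paper and is not an obvious consequence of the operator-norm Egorov bound $\|\scrB_\hbar-B_\hbar\|=O(\hbar^2)$ (which is a statement about the operator, not about the $C^{d+5}_b$ norm of its Weyl symbol). The paper sidesteps this entirely: the only symbol bound it ever needs is on $b\circ\Phi(t,s)$ --- a classical flow composition, where Lemma~\ref{L-BR} gives growth $e^{(d+5)\Ga_2|t-s|}$ with no loss --- and the quantum remainder is controlled once and for all in operator norm by Duhamel and unitarity, which is where the $\hbar$-gain appears. To salvage your plan you would need either to replace the Gronwall at symbol level by a Duhamel argument in operator norm (at which point you have reproduced the paper's decomposition through $B_\hbar$), or to prove an independent $C^{d+5}_b$-symbol Egorov theorem for $\scrB_\hbar(t,s)$, which is a nontrivial piece of work in its own right.
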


\smallskip
On the other hand, let $t\mapsto\Phi(t,\tau;x,\xi)$ be the solution of the Cauchy problem for the Hamiltonian system
$$
\left\{
\ba
{}&\dot{X}(t)=\Xi(t)\,,&&\qquad X(\tau)=x\,,
\\
&\dot\Xi(t)=-\grad_X\scrV(t,X(t))\,,&&\qquad\Xi(\tau)\,=\xi\,.
\ea
\right.
$$

We seek to compare $\scrB(t,s)$ with
$$
B_{\hbar}(t,s):=\OP^W_{\hbar}[b\circ\Phi(t,s)]\,,
$$
where $\Phi(t,s)$ designates the diffeomorphism $\bR^{2d}\ni(x,\xi)\mapsto\Phi(t,s,x,\xi)\in\bR^{2d}$. The proof of Lemma \ref{L-[E,B]} rests on Lemmas \ref{L-EstimB} and \ref{L-scrB-B} stated below.

\subsection{Estimating $[E_\om,B_{\hbar}(t,s)]$}
%%%%%%%%%%%%%%%%%%%%%%%%%%%%%%%%%%%%%%%%%%%%%%%%%%%%%%%%%%%%%%%%%%%%%%%%%%%%%%%%%%%%%%%%%%%%%%%%%%%%%%%%%%%%%%%%%%%%%%%%%

\begin{Lem}\lb{L-BR}
Assume that $\scrV\equiv\scrV(t,x)\in C_b(\bR\times\bR^d)$ is $d+6$ times differentiable in $x$ and satisfies $\grad^m_x\scrV\in C_b(\bR\times\bR^d)$ for all $m=1,\ldots,d+6$. Set
$$
\sup_{(t,x)\in\bR^{d+1}}|\grad^m_x\scrV(t,x)|=:\Ga_m<\infty\,,\quad\hbox{ for }m=1,\ldots,d+6\,.
$$
(a) For each $\nu\in\bN^d$ such that $|\nu|\le d+5$, there exists a positive constant 
$$
G_\nu\equiv G_\nu[\max(\Ga_1,\ldots,\Ga_{|\nu|+1})]
$$
that is a nondecreasing function of $\max(\Ga_1,\ldots,\Ga_{|\nu|+1})$ such that, for each $s,t\in\bR$,
$$
\|\d^\nu\Phi(t,s)\|_{L^\infty(\bR^d\times\bR^d)}\le G_\nu e^{\Ga_2|\nu||t-s|}\,.
$$
(b) For each $\nu\in\bN^d$ such that $|\nu|\le d+5$, there exists a positive constant 
$$
F_\nu\equiv F_\nu[\max(\Ga_1,\ldots,\Ga_{|\nu|+1})]
$$ 
that is a nondecreasing function of $\max(\Ga_1,\ldots,\Ga_{|\nu|+1})$ such that, for each $s,t\in\bR$,
$$
\|\d^\nu(b\circ\Phi(t,s))\|_{L^\infty(\bR^d\times\bR^d)}\le F_\nu e^{\Ga_2|\nu||t-s|}\max_{\mu\le\nu}\|\d^\mu b\|_{L^\infty(\bR^d\times\bR^d)}\,.
$$
\end{Lem}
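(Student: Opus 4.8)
The plan is to prove both parts by differentiating the defining Hamiltonian system in the initial data $(x,\xi)$ and running Gronwall's inequality on the resulting equations (linearized for $|\nu|=1$, inhomogeneous linear for $|\nu|\ge 2$), the forcing terms being controlled by the multivariate Fa\`{a} di Bruno formula. All bounds are uniform in $(x,\xi)$ because $\Ga_1,\dots,\Ga_{d+6}$ are global suprema.

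\emph{Part (a).} Write $\Phi(t,s;x,\xi)=(X(t),\Xi(t))$. For $|\nu|=1$, the Jacobian $D_{(x,\xi)}\Phi(t,s)$ solves the variational equation $\tfrac{d}{dt}D\Phi=\cA(t)D\Phi$, $D\Phi|_{t=s}=I_{2d}$, with
\[
\cA(t)=\begin{pmatrix}0&I_d\\ -\grad^2_x\scrV(t,X(t))&0\end{pmatrix},\qquad \|\cA(t)\|\le\max(1,\Ga_2)\,;
\]
splitting $D\Phi$ into its $X$- and $\Xi$-blocks and applying Gronwall (the purely kinetic coupling $\dot X=\Xi$ contributing only a polynomial-in-time correction, absorbed into the constant) gives $\|D\Phi(t,s)\|\le G_1 e^{\Ga_2|t-s|}$ with $G_1$ controlled by $\Ga_1,\Ga_2$. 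For $2\le|\nu|\le d+5$ I would induct on $|\nu|$: differentiating the system and using Fa\`{a} di Bruno, $\partial^\nu\Phi$ solves the inhomogeneous linear equation
\[
\tfrac{d}{dt}\partial^\nu\Phi=\cA(t)\,\partial^\nu\Phi+(0,-\cF_\nu(t)),\qquad \partial^\nu\Phi|_{t=s}=0,
\]
where the term $(\grad^2_x\scrV)(t,X)\,\partial^\nu X$ has been kept inside $\cA(t)\partial^\nu\Phi$ and
\[
\cF_\nu(t)=\sum_{r\ge 2}\;\sum_{\substack{\mu_1+\dots+\mu_r=\nu\\ 1\le|\mu_j|<|\nu|}}c_{\mu_1,\dots,\mu_r}\,(\grad^{1+r}_x\scrV)(t,X(t))\bigl[\partial^{\mu_1}X(t),\dots,\partial^{\mu_r}X(t)\bigr].
\]
Since $\scrV$ is $d+6$ times differentiable in $x$ and $r\le|\nu|\le d+5$, the factors $\grad^{1+r}_x\scrV$ are bounded by $\max(\Ga_2,\dots,\Ga_{|\nu|+1})$. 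The key bookkeeping point is that $\sum_j|\mu_j|=|\nu|$ with every $|\mu_j|<|\nu|$, so by the inductive hypothesis each monomial of $\cF_\nu$ has sup norm $\le C\prod_j e^{\Ga_2|\mu_j||t-s|}=C\,e^{\Ga_2|\nu||t-s|}$, with $C$ a nondecreasing function of $\max(\Ga_1,\dots,\Ga_{|\nu|+1})$. Feeding this into Duhamel's formula and using $\|\cR(t,r)\|\le G_1 e^{\Ga_2|t-r|}$ for the propagator $\cR$ of $\cA$ (the $|\nu|=1$ bound) yields
\[
\|\partial^\nu\Phi(t,s)\|_{L^\infty}\le\int_s^t G_1 e^{\Ga_2|t-r|}\,C\,e^{\Ga_2|\nu||r-s|}\,\dd r\le G_\nu\,e^{\Ga_2|\nu||t-s|},
\]
the convolution being integrable since $|\nu|\ge 2$; $G_\nu$ is a nondecreasing function of $\max(\Ga_1,\dots,\Ga_{|\nu|+1})$ by construction.

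\emph{Part (b).} This is immediate from (a) and the chain rule: Fa\`{a} di Bruno gives (schematically)
\[
\partial^\nu\bigl(b\circ\Phi(t,s)\bigr)=\sum (\partial^\g b)\bigl(\Phi(t,s)\bigr)\prod_{j}\partial^{\mu_j}\Phi(t,s),
\]
a finite sum (with cardinality depending only on $d$, since $|\nu|\le d+5$) over $|\g|\le|\nu|$ and tuples $(\mu_1,\dots,\mu_{|\g|})$ of nonzero multi-indices with $\sum_j|\mu_j|=|\nu|$. Bounding $|(\partial^\g b)(\Phi(t,s))|\le\|\partial^\g b\|_{L^\infty}\le\max_{\mu\le\nu}\|\partial^\mu b\|_{L^\infty}$ and, by (a), $\|\partial^{\mu_j}\Phi(t,s)\|_{L^\infty}\le G_{\mu_j}e^{\Ga_2|\mu_j||t-s|}$, the product of flow factors contributes $\bigl(\prod_j G_{\mu_j}\bigr)e^{\Ga_2|\nu||t-s|}$; summing the finitely many terms gives the claim with $F_\nu$ a nondecreasing function of $\max(\Ga_1,\dots,\Ga_{|\nu|+1})$.

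\emph{Where the work is.} The substantive step is part (a): organizing the higher-order variational equations so that the forcing $\cF_\nu$ has the product structure above, and checking that the orders $|\mu_j|$ of the flow-derivative factors occurring in $\cF_\nu$ always sum to exactly $|\nu|$ --- this is precisely what lets the growth rate $\Ga_2|\nu|$ propagate through the induction. The one minor point to verify is the base case $|\nu|=1$: that the kinetic coupling $\dot X=\Xi$ produces only polynomial-in-time growth beyond the $e^{\Ga_2|t-s|}$ coming from $\grad^2_x\scrV$, so that it is harmlessly absorbed into $G_1$.
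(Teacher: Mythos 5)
The paper gives no proof of this lemma; it cites Lemma 2.2 of Bouzouina--Robert and says the adaptation to time-dependent potentials is obvious. Your strategy — differentiate the Hamiltonian system in the initial data, treat the resulting equations by Gronwall/Duhamel, organise the forcing via the multivariate Fa\`a di Bruno formula, and close by induction on $|\nu|$ — is exactly the standard one that the cited reference uses, and your bookkeeping in the inductive step (that the inhomogeneity $\cF_\nu$ involves only $\partial^{\mu_j}X$ with $|\mu_j|<|\nu|$, that $\sum_j|\mu_j|=|\nu|$, and that the coefficients $\grad^{1+r}_x\scrV$ are bounded by $\max(\Ga_2,\dots,\Ga_{|\nu|+1})$) is correct, as is the reduction of part~(b) to part~(a).

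There is, however, a genuine gap in the base case, and it infects the whole induction through the propagator $\cR(t,r)$. The parenthetical claim that ``the purely kinetic coupling $\dot X=\Xi$ contribut[es] only a polynomial-in-time correction, absorbed into the constant'' is false on two counts. First, the coupling $\dot X=\Xi$, $\dot\Xi=-\grad^2_x\scrV\,X$ is precisely what produces the exponential: Gronwall applied to $\tfrac{d}{dt}D\Phi=\cA(t)D\Phi$ with $\|\cA(t)\|\le\max(1,\Ga_2)$ gives $\|D\Phi(t,s)\|\le e^{\max(1,\Ga_2)|t-s|}$, not $e^{\Ga_2|t-s|}$ (eliminating $\Xi$ and treating the second-order equation gives the even sharper rate $\sqrt{\Ga_2}$, which is still $>\Ga_2$ when $\Ga_2<1$). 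Second, since the estimate must hold for \emph{all} $s,t\in\bR$ with a constant $G_1$ depending only on $\Ga_1,\Ga_2$, no growing function of $|t-s|$ — polynomial or otherwise — can be ``absorbed into the constant''; the degenerate case $\Ga_2=0$, where $\partial_{\xi}X(t,s)=(t-s)I$ is unbounded while the claimed right-hand side is a constant, makes this explicit. The same issue resurfaces in the Duhamel step, where the convolution integral produces a factor of order $1/(\Ga_2(|\nu|-1))$ that is not nondecreasing in $\Ga_2$. All of this is cured (and the proof closes as you wrote it) once the rate is taken to be $\max(1,\Ga_2)$ rather than $\Ga_2$ — which, incidentally, suggests the lemma as stated in the paper implicitly assumes $\Ga_2\ge 1$ or should be read with that corrected rate; you should flag this rather than argue for the literal $e^{\Ga_2|\nu||t-s|}$ bound, which fails for small $\Ga_2$.
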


Statement (a) is a variant of Lemma 2.2 of \cite{BR} with time-dependent potential $V$; adapting the proof of \cite{BR} to the present case is obvious. Statement (b) follows from (a) by the Fa\`a di Bruno formula: see Lemma 2.4 in \cite{BR},
for an analogous statement, where the dependence of the upper bound in terms of the derivatives of $b$ is not specified.

\begin{Lem}\lb{L-EstimB}
Set 
$$
\cF_{d+3}:=\max_{|\nu|\le d+3}F_\nu[\max(\Ga_1,\ldots,\Ga_{|\nu|+1})]
$$
where $F_\nu$ is the positive constant which appears in Lemma \ref{L-BR} (b). Then, for each $s,t\in\bR$, the operator
$$
B_{\hbar}(t,s):=\OP^W_{\hbar}[b\circ\Phi(t,s)]
$$
satisfies
$$
\left\|\frac1{\hbar}[E_\om,B_{\hbar}(t,s)]\right\|\le\g_d\cF_{d+3}e^{(d+3)\Ga_2|t-s|}|\om|\|b\|_{d+3,\infty}\,,
$$
where $\g_d>0$ is the constant which appears in the Calderon-Vaillancourt theorem (Theorem \ref{T-CV} below).
\end{Lem}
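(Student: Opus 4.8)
The plan is to exploit the fact that $E_\om$ is itself a Weyl operator, namely $E_\om=\OP^W_{\hbar}[(x,\xi)\mapsto e^{i\om\cdot x}]$, so that conjugation by $E_\om$ acts on symbols as a translation in the momentum variable. A direct computation from the integral-kernel formula for Weyl quantization gives, for any symbol $a\equiv a(x,\xi)$,
$$
E_\om\,\OP^W_{\hbar}[a]\,E_\om^*=\OP^W_{\hbar}[a_\om]\,,\qquad a_\om(x,\xi):=a(x,\xi-\hbar\om)\,,
$$
whence, $E_\om^*=E_{-\om}$ being unitary, the exact identity
$$
[E_\om,\OP^W_{\hbar}[a]]=\OP^W_{\hbar}[a_\om-a]\,E_\om\,,\qquad\big\|[E_\om,\OP^W_{\hbar}[a]]\big\|=\big\|\OP^W_{\hbar}[a_\om-a]\big\|\,.
$$
The virtue of this formula is that $a_\om-a$ is $O(\hbar)$: the fundamental theorem of calculus gives
$$
a_\om(x,\xi)-a(x,\xi)=-\hbar\int_0^1(\om\cdot\grad_\xi a)(x,\xi-\th\hbar\om)\,\dd\th\,,
$$
so that $\tfrac1\hbar[E_\om,\OP^W_{\hbar}[a]]$ equals, up to the unitary factor $E_\om$ on the right, the Weyl operator with symbol $-\int_0^1(\om\cdot\grad_\xi a)(\cdot,\cdot-\th\hbar\om)\,\dd\th$.

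I would apply this with $a=b\circ\Phi(t,s)$. It then remains to bound the operator norm of a Weyl operator whose symbol is a $\th$-average of translates of $\om\cdot\grad_\xi(b\circ\Phi(t,s))$. Since $\|\cdot\|_{n,\infty}$ is invariant under translation of $(x,\xi)$, the $C^n_b$-norm of this averaged symbol is at most $\sup_{\th\in[0,1]}\|(\om\cdot\grad_\xi(b\circ\Phi(t,s)))(\cdot,\cdot-\th\hbar\om)\|_{n,\infty}=\|\om\cdot\grad_\xi(b\circ\Phi(t,s))\|_{n,\infty}\le C_d|\om|\,\|b\circ\Phi(t,s)\|_{n+1,\infty}$, one $\xi$-derivative having been consumed. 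Applying the Calderon-Vaillancourt theorem (Theorem \ref{T-CV}), with $n$ the number of derivatives it requires (which does not exceed $d+2$ in dimension $d$), and using that $\|\cdot\|_{m,\infty}$ is nondecreasing in $m$ so that $\|b\circ\Phi(t,s)\|_{n+1,\infty}\le\|b\circ\Phi(t,s)\|_{d+3,\infty}$, I obtain
$$
\Big\|\tfrac1\hbar[E_\om,B_{\hbar}(t,s)]\Big\|\le C_d\,\g_d\,|\om|\,\|b\circ\Phi(t,s)\|_{d+3,\infty}\,.
$$
Finally, Lemma \ref{L-BR}(b) bounds $\|b\circ\Phi(t,s)\|_{d+3,\infty}=\max_{|\nu|\le d+3}\|\d^\nu(b\circ\Phi(t,s))\|_{L^\infty}$ by $\cF_{d+3}\,e^{(d+3)\Ga_2|t-s|}\,\|b\|_{d+3,\infty}$, and after absorbing $C_d$ into the Calderon-Vaillancourt constant one arrives at the asserted estimate.

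I do not anticipate any real obstacle: the argument is a short exact symbolic computation followed by two citations, Theorem \ref{T-CV} and Lemma \ref{L-BR}(b). The one point demanding care is the derivative bookkeeping --- the gradient $\grad_\xi$ costs exactly one order of regularity, which is precisely why the final bound carries $\cF_{d+3}$ and $\|b\|_{d+3,\infty}$ rather than the Calderon-Vaillancourt index --- and, relatedly, one should note that the parameter-integral in $\th$ may be pulled outside $\OP^W_{\hbar}$, which is justified by the uniform-in-$\th$ symbol estimate above.
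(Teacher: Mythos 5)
Your proof is correct and follows essentially the same route as the paper: conjugation by $E_\om$ shifts the Weyl symbol by $\hbar\om$ in the momentum variable, the difference quotient is controlled by $|\om|$ times the $\xi$-gradient via the fundamental theorem of calculus, and the Calderon--Vaillancourt theorem together with Lemma \ref{L-BR}(b) finish the argument. The only difference is that you spell out the Taylor/FTC step and the derivative bookkeeping more explicitly than the paper, which simply writes down the resulting $\g_d|\om|\sup_{|\a|\le[d/2]+1,\,|\b|\le[d/2]+2}\|\d_x^\a\d_\xi^\b b\|_{L^\infty}$ bound directly.
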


\begin{proof}
First, observe that
$$
[E_\om,\OP^W_{\hbar}[b]]=\left(E_\om\OP^W_{\hbar}[b]E_\om^*-\OP^W_{\hbar}[b]\right)E_\om\,,
$$
and that the operator $E_\om\OP^W_{\hbar}[b]E_\om^*$ has integral kernel
$$
\int_{\bR^d}b\left(\frac{x+y}2,\hbar\xi\right)e^{i(\xi+\om)\cdot(x-y)}\frac{\dd\xi}{(2\pi)^d}\,,
$$
so that
$$
E_\om\OP^W_{\hbar}[b]E_\om^*=\OP^W_{\hbar}[b(\cdot,\cdot-\hbar\om)]\,.
$$
Hence
$$
\|[E_\om,\OP^W_{\hbar}[b]]\|=\|\OP^W_{\hbar}[b(\cdot,\cdot-\hbar\om)-b]\|\,,
$$
and by the Calderon-Vaillancourt theorem (Theorem \ref{T-CV} below)
$$
\left\|\frac1{\hbar}[E_\om,\OP^W_{\hbar}[b]]\right\|\le\g_d|\om|\sup_{|\a|\le[d/2]+1\atop|\b|\le[d/2]+2}\|\d_x^\a\d_\xi^\b b\|_{L^\infty}\,.
$$
Since $B_{\hbar}(t,s):=\OP^W_{\hbar}[b\circ\Phi(t,s)]$, this inequality becomes
$$
\left\|\frac1{\hbar}[E_\om,B_{\hbar}(t,s)]\right\|\le\g_d|\om|\sup_{|\a|\le[d/2]+1\atop|\b|\le[d/2]+2}\|\d_x^\a\d_\xi^\b b\circ\Phi(t,s)\|_{L^\infty}\,,
$$
and one concludes with Lemma \ref{L-BR} (b).
\end{proof}

\subsection{Estimating $\scrB_{\hbar}(t,s)-B_{\hbar}(t,s)$}
%%%%%%%%%%%%%%%%%%%%%%%%%%%%%%%%%%%%%%%%%%%%%%%%%%%%%%%%%%%%%%%%%%%%%%%%%%%%%%%%%%%%%%%%%%%%%%%%%%%%%%%%%%%%%%%%%%%%%%%%%

\begin{Lem}\lb{L-scrB-B}
Set 
$$
\cF_{d+5}:=\max_{|\nu|\le d+5}F_\nu[\max(\Ga_1,\ldots,\Ga_{|\nu|+1})]
$$
where $F_\nu$ is the positive constant which appears in Lemma \ref{L-BR} (b). Then, for each $s,t\in\bR$, the operators
$$
B_{\hbar}(t,s):=\OP^W_{\hbar}[b\circ\Phi(t,s)]\quad\hbox{ and }\quad\scrB_{\hbar}(t,s):=U_{\hbar}(t,s)\OP^W_{\hbar}[b]U_{\hbar}(s,t)
$$
satisfy the bound
$$
\ba
\|\scrB_{\hbar}(t,s)-B(t,s)\|\le\tfrac1{24}\g_d\cF_{d+5}\frac{e^{\Ga_2(d+5)|t-s|}-1}{\Ga_2(d+5)}\hbar^2\sup_{|\a|\le[d/2]+1\atop |\b|\le[d/2]+4}\|\d_q^{\a}\d_p^\b b\|_{L^\infty} 
\\
\times\int_{\bR^d}\sup_{\tau\in\bR}|\hat\scrV(\tau,\xi)|(1+|\xi|)^{[d/2]+4}\tfrac{\dd\xi}{(2\pi)^d}&\,.
\ea
$$
where $\g_d>0$ is the constant which appears in the Calderon-Vaillancourt theorem (Theorem \ref{T-CV} below).
\end{Lem}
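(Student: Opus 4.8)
The plan is to run a time-dependent Egorov argument. The point is that $B_\hbar(t,s)=\OP^W_\hbar[b\circ\Phi(t,s)]$ is, by construction, the classical (Liouville) approximation to the Heisenberg-evolved operator $\scrB_\hbar(t,s)$, and the whole content of the lemma is that the defect by which $B_\hbar$ fails to solve the Heisenberg equation driven by $\scrH(\sigma):=-\tfrac{1}{2}\hbar^2\Dlt+\scrV(\sigma,\cdot)$ is of order $\hbar^2$ and is produced \emph{only} by the third $x$-derivatives of $\scrV$. First I would write a Duhamel formula: for $\sigma$ between $s$ and $t$ set $G(\sigma):=U_\hbar(t,\sigma)B_\hbar(\sigma,s)U_\hbar(\sigma,t)$; since $B_\hbar(s,s)=\OP^W_\hbar[b]$ one has $G(s)=\scrB_\hbar(t,s)$ and $G(t)=B_\hbar(t,s)$, so
$$
\scrB_\hbar(t,s)-B_\hbar(t,s)=\int_t^sU_\hbar(t,\sigma)\Big(\tfrac{i}{\hbar}[\scrH(\sigma),B_\hbar(\sigma,s)]+\d_\sigma B_\hbar(\sigma,s)\Big)U_\hbar(\sigma,t)\,\dd\sigma\,,
$$
and, the $U_\hbar$ being unitary, it remains to bound the integral over the interval with endpoints $s,t$ of $\big\|\tfrac{i}{\hbar}[\scrH(\sigma),B_\hbar(\sigma,s)]+\d_\sigma B_\hbar(\sigma,s)\big\|$.

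Next I would identify this integrand. Writing $g_\sigma$ for the Weyl symbol of $B_\hbar(\sigma,s)$ and $H(\sigma;q,p):=\tfrac{1}{2}|p|^2+\scrV(\sigma,q)$, the Weyl calculus gives $\tfrac{i}{\hbar}[\scrH(\sigma),B_\hbar(\sigma,s)]=\OP^W_\hbar[\{H(\sigma),g_\sigma\}_{M,\hbar}]$ (Moyal bracket at scale $\hbar$), while $g_\sigma$ satisfies the classical Liouville transport equation along the Hamiltonian flow of $H(\sigma)$, so that $\d_\sigma B_\hbar(\sigma,s)=-\OP^W_\hbar[\{H(\sigma),g_\sigma\}]$ (Poisson bracket). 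The structural fact that makes everything work is that the kinetic symbol $\tfrac{1}{2}|p|^2$ is a degree-$2$ polynomial in $p$, so its Moyal bracket with any symbol coincides with its Poisson bracket; hence the kinetic contributions cancel exactly and the integrand equals $\OP^W_\hbar[\{\scrV(\sigma,\cdot),g_\sigma\}_{M,\hbar}-\{\scrV(\sigma,\cdot),g_\sigma\}]$. To expose the $\hbar^2$ smallness I would Fourier-decompose $\scrV(\sigma,x)=\tfrac{1}{(2\pi)^d}\int\hat\scrV(\sigma,\om)e^{i\om\cdot x}\,\dd\om$, so that multiplication by $\scrV(\sigma,\cdot)$ becomes $\tfrac{1}{(2\pi)^d}\int E_\om\,\hat\scrV(\sigma,\om)\,\dd\om$, and then use, exactly as in the proof of Lemma \ref{L-EstimB}, that the Weyl symbols of $E_\om\OP^W_\hbar[g]$ and $\OP^W_\hbar[g]E_\om$ are $(x,p)\mapsto e^{i\om\cdot x}g(x,p\mp\tfrac{\hbar}{2}\om)$. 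This turns the $\om$-contribution of the defect into $\OP^W_\hbar\big[e^{i\om\cdot x}r^\hbar_\om(g_\sigma)\big]$, where
$$
r^\hbar_\om(g)(x,p):=\tfrac{i}{\hbar}\big(g(x,p-\tfrac{\hbar}{2}\om)-g(x,p+\tfrac{\hbar}{2}\om)\big)+i\,\om\cdot\grad_p g(x,p)
$$
is precisely the order-$3$ Taylor remainder at $0$ (divided by $\hbar$) of the odd map $h\mapsto g(x,p-h)-g(x,p+h)$ evaluated at $h=\tfrac{\hbar}{2}\om$; its integral form gives $|\d_x^\a\d_p^\b r^\hbar_\om(g)|\le\tfrac{\hbar^2}{24}|\om|^3\sup_{|\g|\le 3}\|\d_x^\a\d_p^{\b+\g}g\|_{L^\infty}$ for all $\a,\b$.

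Then I would quantify. Since $\d_x^\a(e^{i\om\cdot x}\,\cdot\,)$ costs at most a factor $(1+|\om|)^{|\a|}$ (up to a dimensional constant, absorbable in $\g_d$), the Calder\'on--Vaillancourt theorem (Theorem \ref{T-CV}) yields
$$
\big\|\OP^W_\hbar[e^{i\om\cdot x}r^\hbar_\om(g_\sigma)]\big\|\le\tfrac{1}{24}\g_d\,\hbar^2(1+|\om|)^{[d/2]+4}\max_{|\a|\le[d/2]+1,\,|\b|\le[d/2]+4}\|\d_x^\a\d_p^\b g_\sigma\|_{L^\infty}\,,
$$
the budget $[d/2]+4$ in $p$ coming from the $[d/2]+1$ derivatives needed by Calder\'on--Vaillancourt together with the three from the Taylor remainder, and the weight $(1+|\om|)^{[d/2]+4}$ from $(1+|\om|)^{[d/2]+1}\cdot|\om|^3$. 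Lemma \ref{L-BR}(b) then converts $\max\|\d_x^\a\d_p^\b(b\circ\Phi(\sigma,s))\|_{L^\infty}$ into $\cF_{d+5}\,e^{(d+5)\Ga_2|\sigma-s|}\max_{|\a|\le[d/2]+1,\,|\b|\le[d/2]+4}\|\d_q^\a\d_p^\b b\|_{L^\infty}$, every multi-index that occurs having total order at most $d+5$. Inserting this into the Duhamel formula, bounding $|\hat\scrV(\sigma,\om)|\le\sup_\tau|\hat\scrV(\tau,\om)|$, and using $\int e^{(d+5)\Ga_2|\sigma-s|}\dd\sigma=\tfrac{e^{(d+5)\Ga_2|t-s|}-1}{(d+5)\Ga_2}$ over the interval with endpoints $s,t$, one lands on exactly the claimed bound.

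The delicate part is the algebra in the second step: one has to arrange the quantum commutator and the classical transport term so that the cancellation of their common Poisson part is manifest and what survives is honestly $O(\hbar^2)$ --- concretely, that the Moyal and Poisson brackets of the kinetic symbol coincide, and that after Fourier-decomposing $\scrV$ the residual quantum/classical discrepancy is nothing but the cubic finite difference $r^\hbar_\om$. Everything downstream --- Taylor's formula, Calder\'on--Vaillancourt, and the flow-derivative estimates of Lemma \ref{L-BR} --- is routine, the only bookkeeping being to verify that the numbers of derivatives of $b$ and of $\Phi$ stay within the $[d/2]+1$, $[d/2]+4$ and $d+5$ budgets in the statement, and that the constant works out to the advertised $\tfrac{1}{24}\g_d\cF_{d+5}$.
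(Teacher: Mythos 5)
Your argument is correct and is essentially the paper's own proof: the same Duhamel formula, the same observation that the kinetic part of the Moyal bracket cancels the kinetic part of the Poisson bracket exactly because $\tfrac12|\xi|^2$ is quadratic, the same Fourier decomposition of $\scrV$ into the operators $E_\om$, the same third-order Taylor remainder with constant $\tfrac1{24}$, the same Calder\'on--Vaillancourt application with budgets $[d/2]+1$ and $[d/2]+4$, and the same use of Lemma~\ref{L-BR}(b) to pass from $b\circ\Phi(\sigma,s)$ back to $b$ at the cost of $\cF_{d+5}e^{(d+5)\Ga_2|\sigma-s|}$. The only cosmetic difference is that the paper isolates the remainder estimate into the auxiliary Lemma~\ref{L-Qh} (stated directly as a Moyal-product computation with integration variable $\zeta=\mp\om/2$), whereas you carry it out inline via the symbol of $E_\om\OP^W_\hbar[g]E_\om^*$; the two formulations are identical under the change of variables.
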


\begin{proof}
One has
$$
i\hbar\d_t\scrB_{\hbar}(t,s)=[-\tfrac12\hbar^2\Dlt_x+\scrV(t,x),\scrB_{\hbar}(t,s)]\,,\qquad \scrB_{\hbar}(s,s)=\OP^W_{\hbar}[b]\,,
$$
and
$$
i\hbar\d_tB_{\hbar}(t,s)=\OP^W_{\hbar}(-i\hbar\{\tfrac12|\xi|^2+\scrV(t,x),b\circ\Phi(t,s)\})\,,\qquad B_{\hbar}(s,s)=\OP^W_{\hbar}[b]\,.
$$
Thus
$$
i\hbar\d_t(\scrB_{\hbar}(t,s)-B_{\hbar}(t,s))=[-\tfrac12\hbar^2\Dlt_x+\scrV(t,x),\scrB_{\hbar}(t,s)-B_{\hbar}(t,s)]+Q_{\hbar}(t,s)\,,
$$
where
$$
Q_{\hbar}(t,s)=[-\tfrac12\hbar^2\Dlt_x+\scrV(t,x),B_{\hbar}(t,s)]+\OP^W_{\hbar}(i\hbar\{\tfrac12|\xi|^2+\scrV(t,x),b\circ\Phi(t,s)\})\,.
$$
Hence
$$
\scrB_{\hbar}(t,s)-B_{\hbar}(t,s)=\frac1{i\hbar}\int_s^tU_{\hbar}(t,\tau)Q_{\hbar}(\tau,s)U_{\hbar}(\tau,t)\dd\tau\,.
$$
Since $U_{\hbar}(t,s)$ is unitary for all $s,t\in\bR$, one has
$$
\|\scrB_{\hbar}(t,s)-B(t,s)\|\le\frac1{\hbar}\int_s^t\|Q_{\hbar}(\tau,s)\|\dd s\,,\quad\hbox{ for all }t>s\,.
$$
The norm of the source term $Q_{\hbar}$ is estimated with the following lemma.

\begin{Lem}\lb{L-Qh}
For each $a\in\cS(\bR^d\times\bR^d)$, one has
$$
\ba
\frac1{i\hbar}\left[-\tfrac12\hbar^2\Dlt_x+\scrV(t,x),\OP^W_{\hbar}[a]\right]+\OP^W_{\hbar}[\{\tfrac12|\xi|^2+\scrV(t,x),a\}]
\\
=\hbar^2\OP^W_{\hbar}[R_{\hbar}(t,\cdot)]&\,,
\ea
$$
where
$$
\ba
R_{\hbar}(t,q,p):=
\\
\tfrac1{i\pi^d}\!\!\!\int_{\bR^d}\left(\hat\scrV(t,-2\zeta)e^{-2iq\cdot\zeta}\!-\!\hat\scrV(t,2\zeta)e^{2iq\cdot\zeta}\right)\zeta^{\otimes 3}:\left(\int_0^1\tfrac{(1-\tau)^2}{2}\grad^3_pa(q,p\!+\!\tau\hbar\zeta)\dd\tau\right)\dd\zeta&\,.
\ea
$$
Moreover
$$
\ba
\left\|\OP^W_{\hbar}[R_{\hbar}(t,\cdot)]\right\|\le\tfrac{\g_d}{24}\sup_{|\a|\le[d/2]+1\atop |\b|\le[d/2]+4}\|\d_q^{\a}\d_p^\b a\|_{L^\infty} \int_{\bR^d}|\hat\scrV(t,\xi)|(1+|\xi|)^{[d/2]+4}\tfrac{\dd\xi}{(2\pi)^d}\,.
\ea
$$
\end{Lem}

\smallskip
The proof of this lemma is deferred until the next section. Applying the inequality in Lemma \ref{L-Qh} to $a=b\circ\Phi(t,s)$, and Lemma \ref{L-BR} to the symbol $b\circ\Phi(t,s)$, one finds that
$$
\ba
\|Q_{\hbar}(t,s)\|
\\
\le\tfrac{\g_d}{24}\hbar^3\sup_{|\a|\le[d/2]+1\atop |\b|\le[d/2]+4}\|\d_q^{\a}\d_p^\b(b\circ\Phi(t,s))\|_{L^\infty} \int_{\bR^d}\sup_{\tau\in\bR}|\hat\scrV(\tau,\xi)|(1+|\xi|)^{[d/2]+4}\tfrac{\dd\xi}{(2\pi)^d}
\\
\le\tfrac{\g_d}{24}cF_{d+5}e^{\Ga_2(d+5)|t-s|}\hbar^3\sup_{|\a|\le[d/2]+1\atop |\b|\le[d/2]+4}\|\d_q^{\a}\d_p^\b b\|_{L^\infty} \int_{\bR^d}\sup_{\tau\in\bR}|\hat\scrV(\tau,\xi)|(1+|\xi|)^{[d/2]+4}\tfrac{\dd\xi}{(2\pi)^d}&\,.
\ea
$$

Hence
$$
\ba
\|\scrB_{\hbar}(t,s)-B(t,s)\|\le\tfrac1{24}\g_d\cF_{d+5}\left(\int_s^te^{\Ga_2(d+5)|\tau-s|}\dd\tau\right)\hbar^2\sup_{|\a|\le[d/2]+1\atop |\b|\le[d/2]+4}\|\d_q^{\a}\d_p^\b b\|_{L^\infty}
\\
\times\int_{\bR^d}\sup_{\tau\in\bR}|\hat\scrV(\tau,\xi)|(1+|\xi|)^{[d/2]+4}\frac{\dd\zeta}{(2\pi)^d}&\,,
\ea
$$
and the estimate in Lemma \ref{L-scrB-B} follows from computing the first integral on the right hand side of this inequality.
\end{proof}

\subsection{Proof of Lemma \ref{L-Qh}}
%%%%%%%%%%%%%%%%%%%%%%%%%%%%%%%%%%%%%%%%%%%%%%%%%%%%%%%%%%%%%%%%%%%%%%%%%%%%%%%%%%%%%%%%%%%%%%%%%%%%%%%%%%%%%%%%%%%%%%%%%

First, the formula for the Moyal product gives the following identity:
$$
\ba
(\tfrac12|\xi|^2\#a-a\#\tfrac12|\xi|^2)(q,p)=&\frac1{\pi^d}\int_{\bR^{2d}}a(q+z,p)\tfrac12(p+\hbar\zeta)^2(e^{-2iz\cdot\zeta}-e^{2iz\cdot\zeta})\dd z\dd\zeta
\\
=&\frac{\hbar}{\pi^d}\int_{\bR^{2d}}a(q+z,p)(\zeta\cdot p)(e^{-2iz\cdot\zeta}-e^{2iz\cdot\zeta})\dd z\dd\zeta
\\
=&-i\hbar p\cdot\grad_qa(q,p)=-i\hbar\{\tfrac12|\xi|^2,a\}(q,p)\,,
\ea
$$
where the penultimate equality follows from the Fourier inversion formula. Hence
\be\lb{CommutDelta}
\frac1{i\hbar}\left[-\tfrac12\hbar^2\Dlt_x,\OP^W_{\hbar}[a]\right]+\OP^W_{\hbar}[\{\tfrac12|\xi|^2,a\}]=0\,.
\ee

Applying again the Moyal product formula shows that
$$
\ba
(\scrV(t,\cdot)\#a-a\#\scrV(t,\cdot))(q,p)
\\
=\frac1{\pi^d}\int_{\bR^{2d}}\scrV(t,q+z)a(q,p+\hbar\zeta)(e^{2iz\cdot\zeta}-e^{-2iz\cdot\zeta})\dd z\dd\zeta
\\
=\frac1{\pi^d}\int_{\bR^d}\left(\hat\scrV(t,-2\zeta)e^{-2iq\cdot\zeta}-\hat\scrV(t,2\zeta)e^{2iq\cdot\zeta}\right)a(q,p+\hbar\zeta)\dd\zeta&\,.
\ea
$$
Expand the right hand side of the identity above in powers of $\hbar$, up to order $3$; since the function 
$$
\zeta\mapsto\hat\scrV(t,-2\zeta)e^{-2iq\cdot\zeta}-\hat\scrV(t,2\zeta)e^{2iq\cdot\zeta}
$$
is odd, only the first and third order terms are kept in this expansion. Hence
\be\lb{CommutV}
\ba
(\scrV(t,\cdot)\#a-a\#\scrV(t,\cdot))(q,p)
\\
=\frac\hbar{\pi^d}\int_{\bR^d}\left(\hat\scrV(t,-2\zeta)e^{-2iq\cdot\zeta}-\hat\scrV(t,2\zeta)e^{2iq\cdot\zeta}\right)\zeta\cdot\grad_pa(q,p)\dd\zeta
\\
+\frac{\hbar^3}{\pi^d}\!\!\!\int_{\bR^d}\!\!\left(\hat\scrV(t,-2\zeta)e^{-2iq\cdot\zeta}\!-\!\hat\scrV(t,2\zeta)e^{2iq\cdot\zeta}\right)\left(\int_0^1\tfrac{(1-\tau)^2}{2}\grad^3_pa(q,p\!+\!\tau\hbar\zeta):\zeta^{\otimes 3}\dd\tau\right)\dd\zeta&\,.
\ea
\ee
By the Fourier inversion formula, the first integral on the right hand side of the equality above is
$$
\ba
-\frac\hbar{\pi^d}\grad_pa(q,p)\cdot\int_{\bR^d}\hat\scrV(t,2\zeta)e^{2iq\cdot\zeta}(2\zeta)\dd\zeta
\\
=i\hbar\grad_pa(q,p)\cdot\grad\scrV(t,q)=-i\hbar\{\scrV(t,\cdot),a\}(q,p)&\,.
\ea
$$
With \eqref{CommutDelta}  and \eqref{CommutV}, this proves the identity in the lemma.

With the expression of $R_{\hbar}$ so obtained, one has
$$
\ba
\d_q^\a\d_q^\b R_{\hbar}(t,q,p)
\\
=\tfrac{i}{\pi^d}\sum_{\a'\le\a}{\a\choose{\a'}}\int_{\bR^d}(2i\zeta)^{\a-\a'}\left(\hat\scrV(t,2\zeta)e^{2iq\cdot\zeta}-(-1)^{|\a-\a'|}\hat\scrV(t,-2\zeta)e^{-2iq\cdot\zeta}\right)\zeta^{\otimes 3}
\\
:\left(\int_0^1\tfrac{(1-\tau)^2}{2}\grad^3_p\d_q^{\a'}\d_p^\b a(q,p+\tau\hbar\zeta)\dd\tau\right)\dd\zeta&\,.
\ea
$$
Hence
$$
\ba
|\d_q^\a\d_q^\b R_{\hbar}(t,q,p)|\le\frac{1}{6\pi^d}\sup_{\a'\le\a}\|\grad^3_p\d_q^{\a'}\d_p^\b a\|_{L^\infty} \int_{\bR^d}2|\hat\scrV(t,2\zeta)|(1+2|\zeta|)^{|\a|}|\zeta|^3\dd\zeta
\\
\le\frac{2^{2+d}}{3\pi^d}\sup_{\a'\le\a}\|\grad^3_p\d_q^{\a'}\d_p^\b a\|_{L^\infty} \int_{\bR^d}|\hat\scrV(t,\xi)|(1+|\xi|)^{|\a|+3}\dd\xi&\,.
\ea
$$
By Calderon-Vaillancourt's theorem
$$
\|\OP^W_{\hbar}[R_{\hbar}(t,\cdot)]\|\le\tfrac{\g_d}{24}\sup_{|\a|\le[d/2]+1\atop |\b|\le[d/2]+4}\|\d_q^{\a}\d_p^\b a\|_{L^\infty} \int_{\bR^d}|\hat\scrV(t,\xi)|(1+|\xi|)^{[d/2]+4}\tfrac{\dd\xi}{(2\pi)^d}\,.
$$
This completes the proof of Lemma \ref{L-Qh}.

\subsection{Conclusion}
%%%%%%%%%%%%%%%%%%%%%%%%%%%%%%%%%%%%%%%%%%%%%%%%%%%%%%%%%%%%%%%%%%%%%%%%%%%%%%%%%%%%%%%%%%%%%%%%%%%%%%%%%%%%%%%%%%%%%%%%%

\begin{proof} [Proof of Lemma \ref{L-[E,B]}]
Putting together the estimates in Lemmas \ref{L-EstimB} and \ref{L-scrB-B} shows that
$$
\ba
\left\|\frac1\hbar[E_\om,\scrB_{\hbar}(t,s)]\right\|\le\left\|\frac1\hbar[E_\om,\scrB_{\hbar}(t,s)-B_{\hbar}(t,s)]\right\|+\left\|\frac1\hbar[E_\om,B_{\hbar}(t,s)]\right\|
\\
\le\frac2\hbar\|\scrB_{\hbar}(t,s)-B_{\hbar}(t,s)\|+\left\|\frac1\hbar[E_\om,B_{\hbar}(t,s)]\right\|
\\
\le\tfrac{\g_d}{12}\cF_{d+5}\frac{e^{\Ga_2(d+5)|t-s|}-1}{\Ga_2(d+5)}\hbar^2\sup_{|\a|\le[d/2]+1\atop |\b|\le[d/2]+4}\|\d_q^{\a}\d_p^\b b\|_{L^\infty} \int_{\bR^d}\sup_{\tau\in\bR}|\hat\scrV(\tau,\xi)|(1+|\xi|)^{[d/2]+4}\tfrac{\dd\xi}{(2\pi)^d}
\\
+\g_d\cF_{d+3}e^{(d+3)\Ga_2|t-s|}|\om|\|b\|_{d+3,\infty}
\\
\le\g_d\cF_{d+5}\|b\|_{d+5,\infty}\left(|\om|e^{\Ga_2(d+5)|t-s|}+\hbar^2\,\frac{e^{\Ga_2(d+5)|t-s|}-1}{12\Ga_2(d+5)}\bW\right)
\ea
$$
and this inequality implies the result in Lemma \ref{L-[E,B]} with
$$
C_d[\bW]:=\max_{|\nu|\le d+5}F_\nu[\bW]
$$
since all the functions $z\mapsto F_\nu[z]$ are nondecreasing and 
$$
\Ga_m\le\sup_{\tau\in\bR}\int_{\bR^d}|\scrV(\tau,\xi)||\xi|^m\tfrac{\dd\xi}{(2\pi)^d}\le\bW\,,\quad\hbox{ for all }m=0,\ldots,d+6\,.
$$ 

\end{proof}

\smallskip
Finally, we apply Lemma \ref{L-[E,B]} to prove the key semiclassical bound for the commutator to be estimated in Lemma \ref{L-KeyL}.

\begin{proof}[Proof of Lemma \ref{L-KeyL}]
Apply Lemma \ref{L-[E,B]} to the case where 
$$
\scrV(t,x)=V_{R(t)}(x)=\int_{\bR^d}V(x-y)r(t,y,y)\dd y\,.
$$
Set
$$
\tilde\Ga_m:=\|\grad^mV\|_{L^\infty(\bR^d)}\le\bV<\infty\quad\hbox{ for }m=0,\ldots,d+6\,.
$$

Since $R(t)\in\cD(\fH)$ for all $t\in\bR$, the function $\rho(t,):\,y\mapsto r(t,y,y)$ satisfies
$$
\rho(t,y)\ge 0\hbox{  for a.e. }y\in\bR^d\,,\quad\hbox{ and }\int_{\bR^d}\rho(t,y)\dd y=1\,,\quad \hbox{ for all }t\in\bR\,.
$$
Hence
$$
\Ga_m:=\sup_{t\in\bR}\|\grad^m_x\scrV(t,\cdot)\|_{L^\infty(\bR^d)}=\sup_{t\in\bR}\|\rho(t,\cdot)\star\grad^mV\|_{L^\infty(\bR^d)}\le\tilde\Ga_m
$$
for all $m=0,\ldots,d=6$. In particular $\Ga_2\le\tilde\Ga_2$.

On the other hand, the Fourier transform $\hat\rho(t,\cdot)$ of the function $\rho(t,\cdot)$ satisfies
$$
\|\hat\rho(t,\cdot)\|_{L^\infty}\le 1\,,\quad\hbox{ for all }t\in\bR\,,
$$
so that
$$
|\hat\scrV(t,\xi)|=|\hat V(\xi)\hat\rho(t,\xi)|\le|\hat V(\xi)|\,,\quad\hbox{ for a.e. }\xi\in\bR^d\hbox{ and all }t\in\bR\,.
$$
Hence
$$
\bW=\int_{\bR^d}\sup_{\tau\in\bR}|\hat\scrV(\tau,\xi)|(1+|\xi|)^{d+6}\dd\xi\le\int_{\bR^d}|\hat V(\xi)|(1+|\xi|)^{d+6}\dd\xi=\bV\,.
$$

Since $A^{in}\in\cW[T]$, there exists $\tau\in[0,T]$ such that $A^{in}=S(0,\tau)\OP^W_{\hbar}[a]S(\tau,0)$ with $a\in\cB^{d+5}$. Hence 
$$
A(s)=S(s,\tau)\OP^W_{\hbar}[a]S(\tau,s)
$$
and Lemma \ref{L-[E,B]} implies that
$$
\frac1\hbar\|[E_\om,A(s)]\|\le C_d[\bW]\left(|\om|e^{(d+5)\Ga_2|s-\tau|}+\hbar^2\,\frac{e^{(d+5)\Ga_2|s-\tau|}-1}{(d+5)\Ga_2}\bW\right)\,.
$$
Since $s,\tau\in[0,T]$, one has $|s-\tau|\le T$, so that
$$
\frac1\hbar\|[E_\om,A(s)]\|\le C_d[\bW]\left(|\om|e^{(d+5)\Ga_2T}+\hbar^2\,\frac{e^{(d+5)\Ga_2T}-1}{(d+5)\Ga_2}\bW\right)\,.
$$
Finally, this inequality implies that
$$
\frac1\hbar\|[E_\om,A(s)]\|\le C_d[\bV]\left(|\om|e^{(d+5)\tilde\Ga_2T}+\hbar^2\,\frac{e^{(d+5)\tilde\Ga_2T}-1}{(d+5)\tilde\Ga_2}\bV\right)\,.
$$
because the functions
$$
z\mapsto e^{cz}\quad\hbox{ and }\quad z\mapsto\frac{e^{cz}-1}{z}
$$
are increasing on $(0,+\infty)$ for all $c>0$, while $W\mapsto C_d[W]$ is increasing on $(0,+\infty)$. This last inequality is exactly the bound in Lemma \ref{L-KeyL}.
\end{proof}

%%%%%%%%%%%%%%%%%%%%%%%%%%%%%%%%%%%%%%%%%%%%%%%%%%%%%%%%%%%%%%%%%%%%%%%%%%%%%%%%%%%%%%%%%%%%%%%%%%%%%%%%%%%%%%%%%%%%%%%%%

\begin{appendix}

%%%%%%%%%%%%%%%%%%%%%%%%%%%%%%%%%%%%%%%%%%%%%%%%%%%%%%%%%%%%%%%%%%%%%%%%%%%%%%%%%%%%%%%%%%%%%%%%%%%%%%%%%%%%%%%%%%%%%%%%%

\section{A Unified Setting for the Terms\\ $\ad^*(-\tfrac12\hbar^2\Dlt)\cM_N(t)$ and $\cC[V,\cM_N(t),\cM_N(t)$}\label{A-Hochschild}

%%%%%%%%%%%%%%%%%%%%%%%%%%%%%%%%%%%%%%%%%%%%%%%%%%%%%%%%%%%%%%%%%%%%%%%%%%%%%%%%%%%%%%%%%%%%%%%%%%%%%%%%%%%%%%%%%%%%%%%%%

According to Lemma \ref{L-LRcP}, 
$$
\ad^*(-\tfrac12\hbar^2\Dlt)\scrR_N(t)-\cC[V,\scrR_N(t),\scrR_N(t)]=\ad^*(-\tfrac12\hbar^2\Dlt+V_{R(t)})\scrR_N(t)\,,
$$
when $\scrR_N(t)$ is of the form \eqref{HartreeEM}. 

This suggests to look for a common structure in both terms $\ad^*(-\tfrac12\hbar^2\Dlt)\cM_N(t)$ and $\cC[V,\cM_N(t),\cM_N(t)]$, although $\cM_N(t)$ is not in the form \eqref{HartreeEM}. This can be achieved, at least at the formal level,
by the following remark.

All the tensor products appearing in the discussion below designate tensor products of $\bC$- vector spaces. Observe that $\cL(\cL(\fH),\cL(\fH_N))$ is a $\cL(\fH)^{op}\otimes\cL(\fH_N)$-bimodule, where $\cL(\fH)^{op}$ is the opposite 
of the algebra $\cL(\fH)$ --- in other words, the product in $\cL(\fH)^{op}$ is 
$$
\mu^{op}:\,\cL(\fH)\otimes\cL(\fH)\ni A\otimes B\mapsto BA\in\cL(\fH)\,.
$$
The product in the algebra $\cL(\fH)^{op}\otimes\cL(\fH_N)$ is defined by the formula
$$
(P\otimes Q_N)(P'\otimes Q'_N):=(P'P)\otimes(Q_NQ'_N)\,,\qquad P,P'\in\cL(\fH)\,,\quad Q_N,Q'_N\in\cL(\fH_N)\,.
$$
The left scalar multiplication in the $\cL(\fH)^{op}\otimes\cL(\fH_N)$-bimodule $\cL(\cL(\fH),\cL(\fH_N))$ is defined as follows. Let $\L\in\cL(\cL(\fH),\cL(\fH_N))$, let $P\in\cL(\fH)$ and $Q_N\in\cL(\fH_N)$, then
$$
(P\otimes Q_N)\cdot\L:\,\cL(\fH)\ni A\mapsto Q_N\L(PA)\in\cL(\fH_N)\,.
$$
That this is a left action of $\cL(\fH)^{op}\otimes\cL(\fH_N)$ on $\cL(\cL(\fH),\cL(\fH_N))$ is seen with the following elementary computation
$$
\ba
((P'\otimes Q'_N)\cdot((P\otimes Q_N)\cdot\L))A&=Q'_N((P\otimes Q_N)\cdot\L)(P'A)
\\
&=(Q'_NQ_N)\L(PP'A)
\\
&=(((PP')\otimes(Q'_NQ_N))\L)A\,,
\ea
$$
for all $P,P'$ and $A\in\cL(\fH)$ and $Q_N,Q'_N\in\cL(\fH_N)$. Likewise, one easily checks that the right scalar multiplication in the $\cL(\fH)^{op}\otimes\cL(\fH_N)$-bimodule $\cL(\cL(\fH),\cL(\fH_N))$, defined by the formula
$$
\L\cdot(P\otimes Q_N):\,\cL(\fH)\ni A\mapsto \L(AP)Q_N\in\cL(\fH_N)
$$
is indeed a right action.

In this setting, one has
\be\lb{Hochsch-b}
\L\cdot(P\otimes Q_N)-(P\otimes Q_N)\cdot\L=\bbb_1(\L\otimes(P\otimes Q_N))\,,
\ee
where 
$$
\bbb_1:\,\cL(\cL(\fH),\cL(\fH_N))\otimes(\cL(\fH)^{op}\otimes\cL(\fH_N))\to\cL(\cL(\fH),\cL(\fH_N))
$$
is the Hochschild boundary map in degree one, defined on the space of Hochschild $1$-chains
$$
C_1(\cL(\cL(\fH),\cL(\fH_N));\cL(\fH)^{op}\otimes\cL(\fH_N)):=\cL(\cL(\fH),\cL(\fH_N))\otimes(\cL(\fH)^{op}\otimes\cL(\fH_N))
$$
with values in the space of Hochschild $0$-chains
$$
C_0(\cL(\cL(\fH),\cL(\fH_N));\cL(\fH)^{op}\otimes\cL(\fH_N)):=\cL(\cL(\fH),\cL(\fH_N))\,.
$$
(See for instance section 1.1 in \cite{Loday} for a quick presentation of Hochschild homology.) Hence
\be\lb{CC=b1}
\ba
\cC[E_\om,\cM_N,\cM_N]=&(\cM_NE^*_\om)\cM_N(E_\om A)-(\cM_N(AE_\om))(\cM_NE^*_\om)
\\
=&-\bbb_1(\cM_N\otimes(E_\om\otimes(\cM_NE^*_\om)))&\,.
\ea
\ee

In the special case where $Q_N=I_{\fH_N}$, the identity \eqref{Hochsch-b} takes the form
\be\lb{b1=ad}
\bbb_1(\L\otimes(P\otimes I_{\fH_N})=\ad^*(P)\L\,,
\ee
and this is the key observation leading to Lemma \ref{L-LRcP}.

Since the Hochschild boundary map $\bbb_1$ is linear, one can think of the operator $\cC[V,\cM_N,\cM_N]$ as 
\be\lb{CCV=b1}
\cC[V,\cM_N,\cM_N]=-\bbb_1\left(\cM_N\otimes\int_{\bR^d}E_\om\otimes(\cM_NE^*_\om)\hat V(\dd\om)\right)\,.
\ee
With the previous identity, equation \eqref{EMQEq} takes the form
\be\lb{EMQEqb1}
i\hbar\d_t\cM_N(t)=\bbb_1(\cM_N(t)\otimes H[\cM_N(t)])
\ee
where $\scrH[\cM_N(t)]$ is the $N$-body quantum Hamiltonian viewed as the element of $\cL(\fH)^{op}\otimes\cL(\fH_N)$ defined by the formula
\be\lb{HN[MN]}
\scrH[\cM_N(t)]:=-\tfrac12\hbar^2\Dlt\otimes I_{\fH_N}+\int_{\bR^d}E_\om\otimes(\cM_NE^*_\om)\hat V(\dd\om)\,.
\ee
While the discussion in the previous paragraphs is mathematically rigorous, writing \eqref{EMQEq} as \eqref{EMQEqb1} is purely formal for two reasons. First, $\Dlt$ is an unbounded operator on $\fH$, so that one cannot think of
$\cM_N(t)\otimes(-\tfrac12\hbar^2\Dlt\otimes I_{\fH_N})$ as an element of $C_1(\cL(\cL(\fH),\cL(\fH_N));\cL(\fH)^{op}\otimes\cL(\fH_N))$. The term
$$
\int_{\bR^d}E_\om\otimes(\cM_NE^*_\om)\hat V(\dd\om)
$$
is even more annoying. Unless $\hat V$ is a finite linear combination of Dirac measures, this integral does not define an element of  $\cL(\fH)^{op}\otimes\cL(\fH_N)$. At best, this integral could be thought of as an element of some 
completion of $\cL(\fH)^{op}\otimes\cL(\fH_N)$, the choice of which might not be completely obvious. (For instance, using cross norms to define the topology on $\cL(\fH)^{op}\otimes\cL(\fH_N)$ might lead to serious difficulties since
the map $\bR\ni\om\mapsto E_\om\in\cL(\fH)$ is not continuous for the norm topology. Likewise, one should probably avoid thinking of the integral above as a Bochner integral since $\cL(\fH)$ and $\cL(\fH_N)$ are not separable.)
In view of these difficulties, we shall not go further in this direction, and stick to our Definition \ref{D-DefC} of the interaction term $\cC[V,\cM_N,\cM_N]$. Nevertheless, it might be useful to keep in mind the form \eqref{EMQEqb1} of
the governing equation \eqref{EMQEq} for $\cM_N$, together with the formal expression of the interaction term as in \eqref{CCV=b1}-\eqref{HN[MN]}.

Of course, if $\cM_N(t)$ is of the form $\scrR(t)$ given by \eqref{HartreeEM}, then
$$
\int_{\bR^d}E_\om\otimes(\cM_NE^*_\om)\hat V(\dd\om)=\left(\int_{\bR^d}E_\om\Tr_\fH(R(t)E^*_\om)\hat V(\dd\om)\right)\otimes I_{\fH_N}=V_{R(t)}\otimes I_{\fH_N}\,,
$$
so that
$$
\scrH[\scrR(t)]=(-\tfrac12\hbar^2\Dlt+V_{R(t)})\otimes I_{\fH_N}\,.
$$
Then, according to \eqref{b1=ad}, 
$$
\bbb_1(\scrR(t)\otimes\scrH[\scrR(t)])=\ad^*(-\tfrac12\hbar^2\Dlt+V_{R(t)})\scrR(t)\,,
$$
so that equation \eqref{EMQEqb1} for $\scrR$ given by \eqref{HartreeEM} takes the form
$$
i\hbar\d_t\scrR(t)=\bbb_1(\scrR(t)\otimes\scrH[\scrR(t)])=\ad^*(-\tfrac12\hbar^2\Dlt+V_{R(t)})\scrR(t)\,,
$$
which is clearly equivalent to \eqref{Hartree} for $R(t)$.

%%%%%%%%%%%%%%%%%%%%%%%%%%%%%%%%%%%%%%%%%%%%%%%%%%%%%%%%%%%%%%%%%%%%%%%%%%%%%%%%%%%%%%%%%%%%%%%%%%%%%%%%%%%%%%%%%%%%%%%%%

\section{Wigner Transformation, Weyl Quantization\\ and the Calderon-Vaillancourt Theorem}\label{A-Weyl}

%%%%%%%%%%%%%%%%%%%%%%%%%%%%%%%%%%%%%%%%%%%%%%%%%%%%%%%%%%%%%%%%%%%%%%%%%%%%%%%%%%%%%%%%%%%%%%%%%%%%%%%%%%%%%%%%%%%%%%%%%

We first recall the notion of Wigner transform \cite{Wigner}. For each $K\in\cL(\fH)$ with integral kernel $k\equiv k(x,y)\in\bC$ such that $k\in\cS(\bR^d\times\bR^d)$, the Wigner transform at scale $\hbar$ of $K$ is the element of 
$\cS(\bR^d\times\bR^d)$ defined by the formula
$$
W_{\hbar}[K](x,\xi):=\tfrac1{(2\pi)^d}\int_{\bR^d}k(x+\tfrac12\hbar y,x-\tfrac12\hbar y)e^{-i\xi\cdot y}\dd y\,.
$$
By Plancherel's theorem, for each $K_1,K_2\in\cL(\fH)$ with integral kernels $k_1,k_2\in\cS(\bR^d\times\bR^d)$, one has
$$
\ba
\iint_{\bR^d\times\bR^d}\overline{W_{\hbar}[K_1](x,\xi)}W_{\hbar}[K_2](x,\xi)\dd x\dd\xi
\\
=\frac1{(2\pi\hbar)^d}\iint_{\bR^d\times\bR^d}\overline{k_1(x,y)}k_2(x,y)\dd x\dd y=\frac1{(2\pi\hbar)^d}\Tr_{\fH}(K^*_1K_2)&\,.
\ea
$$
Therefore the linear map $K\mapsto(2\pi\hbar)^{d/2}W_{\hbar}[K]$ extends as an unitary isomorphism from $\cL^2(\fH)$ to $L^2(\bR^d\times\bR^d)$. Observe that 
$$
W_{\hbar}[K^*](x,\xi)=\overline{W_{\hbar}[K](x,\xi)}\,,\quad\hbox{ for a.e. }(x,\xi)\in\bR^d\times\bR^d\,.
$$

We next recall the semiclassical Weyl quantization. For each $a\in\cS(\bR^d_x\times\bR^d_\xi)$ with polynomial growth as $|x|+|\xi|\to+\infty$, the expression
$$
(\OP^W_{\hbar}[a]\psi)(x)=\tfrac1{(2\pi)^d}\iint_{\bR^d\times\bR^d}a\left(\frac{x+y}2,\hbar\xi\right)e^{i\xi\cdot(x-y)}\psi(y)\dd\xi\dd y
$$
defines a linear map from $\cS(\bR^d)$ to itself. Observe that
$$
\int_{\bR^d}\overline{\phi(x)}(\OP^W_{\hbar}[a]\psi)(x)\dd x=\iint_{\bR^d\times\bR^d}\overline{W_{\hbar}[|\phi\ra\la\psi|](x,\xi)}a(x,\xi)\dd x\dd\xi\,.
$$
This formula extends to density operators more general than $|\psi\ra\la\psi|$, viz.
\be\lb{DualWOpW}
\Tr_{L^2(\bR^d)}(R\,\OP^W_{\hbar}[a])=\iint_{\bR^d\times\bR^d}W_{\hbar}[R](x,\xi)a(x,\xi)\dd x\dd\xi
\ee
for all $R\in\cD(\fH)$.

This defines $\OP^W_{\hbar}$ by duality as a linear map from $\cS(\bR^d)$ to $\cS'(\bR^d)$ for all $a\in\cS'(\bR^d\times\bR^d)$. Observe that
$$
\OP^W_{\hbar}[a]^*=\OP^W_{\hbar}[\overline a]\,.
$$

We shall need the formula for the composition of Weyl operators: one has
$$
\OP^W_{\hbar}[a]\OP^W_{\hbar}[b]=\OP^W_{\hbar}[a\#b]\,,
$$
where $a\#b$ is the Moyal product of $a$ and $b$, defined by the following formula:
$$
a\#b(q,p):=\frac1{(\pi\hbar)^{2d}}\int_{\bR^{4d}}a(q+x,p+\xi)b(q+y,p+\eta)e^{2i(x\cdot\eta-y\cdot\xi)/\hbar}\dd x\dd y\dd\xi\dd\eta\,.
$$

\begin{Thm}[Calderon-Vaillancourt]\lb{T-CV}
For each integer $d\ge 1$, there exists a constant $\g_d>0$ with the following property.

Let $a\in\cS'(\bR^d_x\times\bR^d_\xi)$ satisfy the condition
$$
|\a|,|\b|\le[d/2]+1\Rightarrow\d^\a_x\d^\b_\xi a\in L^\infty(\bR^d_x\times\bR^d_\xi)\,.
$$
Then, for all $\hbar\in(0,1)$, one has
$$
\|\OP^W_{\hbar}[a]\|_{\cL(L^2(\bR^d))}\le\g_d\max_{|\a|,|\b|\le[d/2]+1}\|\d^\a_x\d^\b_\xi a\|_{L^\infty(\bR^d\times\bR^d)}\,.
$$
\end{Thm}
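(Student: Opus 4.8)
\emph{Strategy.} I would establish the sharp $L^2$-bound in two moves: first a unitary rescaling that removes the parameter $\hbar$, reducing everything to the case $\hbar=1$; then an almost-orthogonality (Cotlar--Stein) argument carried out on a phase-space partition of unity, followed by a refinement of the bookkeeping to reach the optimal index $[d/2]+1$.

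\emph{Removing $\hbar$.} For $\hbar\in(0,1]$ let $D_\hbar$ be the unitary dilation on $\fH$ defined by $(D_\hbar\psi)(x):=\hbar^{-d/4}\psi(\hbar^{-1/2}x)$. A change of variables in the oscillatory integral defining $\OP^W_\hbar$ gives
$$
D_\hbar^*\,\OP^W_\hbar[a]\,D_\hbar=\OP^W_1[a_\hbar],\qquad a_\hbar(x,\xi):=a(\hbar^{1/2}x,\hbar^{1/2}\xi)\,.
$$
Since $\d^\a_x\d^\b_\xi a_\hbar(x,\xi)=\hbar^{(|\a|+|\b|)/2}(\d^\a_x\d^\b_\xi a)(\hbar^{1/2}x,\hbar^{1/2}\xi)$ and $\hbar\le1$, one has $\|\d^\a_x\d^\b_\xi a_\hbar\|_{L^\infty}\le\|\d^\a_x\d^\b_\xi a\|_{L^\infty}$ for every multi-index; hence it suffices to prove the bound at $\hbar=1$ with a constant $\g_d$ depending only on $d$.

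\emph{Cotlar--Stein on a phase-space partition.} Fix $\chi\in C^\infty_c(\bR^{2d})$ with $\sum_{\mu\in\bZ^{2d}}\chi(\cdot-\mu)\equiv1$, set $a_\mu:=a\,\chi(\cdot-\mu)$ and $T_\mu:=\OP^W_1[a_\mu]$, so that $\OP^W_1[a]=\sum_\mu T_\mu$. Two structural facts drive the estimate. (i) Weyl quantization is covariant under phase-space translations: for $v\in\bR^{2d}$ one has $\OP^W_1[a(\cdot-v)]=\mathscr{W}_v\,\OP^W_1[a]\,\mathscr{W}_v^*$ for a unitary Heisenberg--Weyl operator $\mathscr{W}_v$; hence $\|T_\mu^*T_\nu\|$ and $\|T_\mu T_\nu^*\|$ depend only on $\mu-\nu$ and only on symbols supported near the origin. (ii) $T_\mu^*T_\nu=\OP^W_1[\overline{a_\mu}\#a_\nu]$, and a non-stationary-phase analysis of the Moyal product of two compactly supported symbols whose supports lie at distance $|\mu-\nu|$ yields, for every $N$,
$$
\|T_\mu^*T_\nu\|+\|T_\mu T_\nu^*\|\le C_{d,N}\,(1+|\mu-\nu|)^{-N}\Big(\max_{|\g|\le N+2d}\|\d^\g a\|_{L^\infty}\Big)^2\,.
$$
Taking $N>4d$ and invoking the Cotlar--Stein lemma gives $\|\OP^W_1[a]\|\le\big(\sup_\mu\sum_\nu\|T_\mu^*T_\nu\|^{1/2}\big)^{1/2}\big(\sup_\mu\sum_\nu\|T_\mu T_\nu^*\|^{1/2}\big)^{1/2}<\infty$, which already proves a non-sharp form of the theorem (with $[d/2]+1$ replaced by some $N(d)$).

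\emph{Main obstacle.} The delicate point, which I expect to be the technically hardest part, is to bring the number of derivatives down from the crude $N(d)$ to the optimal $[d/2]+1$ in each of the variables $x$ and $\xi$ (equivalently, mixed derivatives up to total order $2([d/2]+1)$). The loss above stems from bounding $\widehat{a_\mu}$ in $L^1$ by brute force and from handling $T_\mu^*T_\nu$ wastefully; the sharp count is recovered by a more careful almost-orthogonality bookkeeping --- e.g. a Littlewood--Paley decomposition of the symbols in the phase-space frequency variable, combined with the elementary bound $\|\OP^W_1[e^{i(\sigma\cdot x+\tau\cdot\xi)}]\|=1$ and a volume count which, owing to the tensorial ($x$ versus $\xi$) structure, closes as soon as one controls $[d/2]+1$ derivatives in each variable separately --- precisely the integer Sobolev threshold reflecting $H^{[d/2]+1}(\bR^d)\hookrightarrow L^\infty$. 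This is the Cordes--Hwang sharpening of the Calderón--Vaillancourt estimate; the full details are classical and can be taken from the standard references.
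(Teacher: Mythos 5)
The paper does not actually prove Theorem~\ref{T-CV}: it states it and immediately cites Boulkhemair \cite{cvbr} and formula (54) on p.~236 of Bouzouina--Robert \cite{BR}. So there is no proof in the paper to compare yours against; the relevant question is whether your sketch is a workable self-contained argument, and there the picture is mixed.

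The $\hbar$-rescaling step is correct and cleanly done: conjugating $\OP^W_{\hbar}[a]$ by the unitary dilation $D_{\hbar}$ does produce $\OP^W_1[a_{\hbar}]$ with $a_{\hbar}(x,\xi)=a(\hbar^{1/2}x,\hbar^{1/2}\xi)$, and since $\hbar\le 1$ each derivative picks up a factor $\hbar^{(|\a|+|\b|)/2}\le 1$, so the seminorms can only decrease. This reduction to $\hbar=1$ is genuinely useful and is not spelled out in the paper's references, so it is a worthwhile observation. The Cotlar--Stein paragraph is also accurate as far as it goes: translation covariance of Weyl quantization makes $\|T_\mu^*T_\nu\|$ depend only on $\mu-\nu$, non-stationary phase gives polynomial decay in $|\mu-\nu|$ at the cost of $O(d)$ extra derivatives, and the lemma then yields boundedness. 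All of this is standard and correct, and it proves a \emph{weaker} form of the theorem (with roughly $4d$ derivatives).

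The gap is in the last paragraph. You openly acknowledge that reaching $[d/2]+1$ derivatives in each of $x$ and $\xi$ separately --- which is the entire nontrivial content of this particular statement --- is deferred to ``classical references.'' But that sharpening is not a bookkeeping refinement of the Cotlar--Stein scheme you set up; it requires a genuinely different mechanism, e.g.\ decomposing $\widehat a$ dyadically in the \emph{dual} phase-space variable (not the base variable, as your partition $\chi(\cdot-\mu)$ does), using the exact unitarity $\|\OP^W_1[e^{i(\si\cdot x+\tau\cdot\xi)}]\|=1$ to control each dyadic piece, and then summing via Cauchy--Schwarz --- this is essentially Boulkhemair's route, and it accounts precisely for why the separate indices $|\a|,|\b|\le[d/2]+1$ appear rather than a condition on $|\a|+|\b|$. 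As written, your argument proves a true but weaker statement and delegates the sharp version wholesale. Since that sharp version is exactly what the paper invokes (e.g.\ in Lemma~\ref{L-EstimB} and Lemma~\ref{L-Qh}, where the index $[d/2]+1$ is used quantitatively), the proposal as it stands is an outline with the key step missing rather than a proof. If you intend this as a citation-level justification, say so explicitly and cite Boulkhemair and Cordes/Hwang; if you intend it as a proof, you must actually carry out the frequency-side decomposition.
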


This variant of the Calderon-Vaillancourt theorem is due to Boulkhemair \cite{cvbr} --- see also formula (54) on p. 236 in \cite{BR}.

\end{appendix}

%%%%%%%%%%%%%%%%%%%%%%%%%%%%%%%%%%%%%%%%%%%%%%%%%%%%%%%%%%%%%%%%%%%%%%%%%%%%%%%%%%%%%%%%%%%%%%%%%%%%%%%%%%%%%%%%%%%%%%%%%

\end{document}